\let\OLDthebibliography\thebibliography
\renewcommand\thebibliography[1]{
 \OLDthebibliography{#1}
 \setlength{\parskip}{1.4pt}
 \setlength{\itemsep}{0pt plus 0.2ex}
}
\DeclareMathOperator{\cross}{cross}
\DeclareMathOperator{\nhomog}{\textit{n}--homog}
\DeclareMathOperator{\nexs}{\textit{n}--exs}
\DeclareMathOperator{\stable}{stable}
\DeclareMathOperator{\proj}{proj}
\DeclareMathOperator{\ml}{ml}
\DeclareMathOperator{\Sp}{Sp}
\DeclareMathOperator{\topspace}{Top}
\DeclareMathOperator{\diff}{diff}
\DeclareMathOperator{\hodiff}{hodiff}
\DeclareMathOperator{\skel}{sk}
\DeclareMathOperator{\cref}{cr}
\DeclareMathOperator{\hocref}{hocr}
\DeclareMathOperator{\nat}{Nat}
\DeclareMathOperator{\mapdiag}{map--diag}
\DeclareMathOperator{\obdiag}{ob--diag}
\DeclareMathOperator{\symfun}{Sym--Fun}
\newcommand{\devcat}{\Sigma_n \ltimes (\wcal_n \Top)}
\newcommand{\orthdevcat}{O(n) \ltimes (\jcal_n \Top)}
\newcommand{\lca}{\circlearrowleft}
\newcommand{\bigsmashprod}[2]{\underset{#1}{\overset{#2}{\bigwedge}}}
\newcommand{\Top}{{\topspace}}
\newcommand{\sphspec}{{\mathbb{S}}}
\newcommand{\mcal}{{ \mathcal{M} }}
\newcommand{\pcal}{{ \mathcal{P} }}
\newcommand{\ccal}{{ \mathcal{C} }}
\newcommand{\dcal}{{ \mathcal{D} }}
\newcommand{\scal}{{ \mathcal{S} }}
\newcommand{\ecal}{{ \mathcal{E} }}
\newcommand{\wcal}{{ \mathcal{W} }}
\newcommand{\jcal}{{ \mathcal{J} }}
\newcommand{\smashprod}{\wedge}
\newcommand{\co}{\colon \!}
\newcommand{\fibrep}{{ \widehat{r} }}
\DeclareMathOperator{\id}{Id}
\DeclareMathOperator{\holim}{holim}
\DeclareMathOperator{\hocolim}{hocolim}
\DeclareMathOperator{\colim}{colim}
\newtheorem{theorem}{Theorem}[section]
\newtheorem{proposition}[theorem]{Proposition}
\newtheorem{corollary}[theorem]{Corollary}
\newtheorem{lemma}[theorem]{Lemma}
\newtheorem{definition}[theorem]{Definition}
\newtheorem{ex}[theorem]{Example}
\newtheorem{remark}[theorem]{Remark}
\newtheorem*{proof}{Proof.}
\newcommand{\ra}{\rightarrow}
\newcommand{\lra}{\longrightarrow}
\newcommand{\Fun}{\mathrm{Fun}}
\newcommand{\Nat}{\mathrm{Nat}}
\DeclareMathOperator{\Hom}{Hom}
\newcommand{\join}{\Asterisk}
\newcommand{\smsh}{\wedge}
\newcommand{\x}{\times}
\newcommand{\T}{\mathrm{T}}
\renewcommand{\P}{\mathrm{P}}
\newcommand{\Z}{\mathbb{Z}}
\title{Capturing Goodwillie's Derivative}
\author{David Barnes \and Rosona Eldred}
\begin{document}
\maketitle
\normalem
\begin{abstract}
\noindent Recent work of Biedermann and R\"ondigs
has translated Goodwillie's calculus of functors
into the language of model categories.
Their work focuses on symmetric multilinear functors
and the derivative
appears only briefly.
In this paper we focus on understanding the derivative as a right Quillen functor to a new model category.
This is directly analogous to the behaviour of Weiss's derivative
in orthogonal calculus.
The immediate advantage of this new category is
that we obtain a streamlined and more informative proof that
the $n$--homogeneous functors are classified by
spectra with a $\Sigma_n$-action.
In a later paper we will use this new model category
to give a formal comparison between the orthogonal calculus and
Goodwillie's calculus of functors.
\end{abstract}

\begin{spacing}{0.7}
\tableofcontents
\end{spacing}

\section{Introduction}
Goodwillie's calculus of homotopy functors is a highly successful method of studying
equivalence-preserving functors, often with source and target either spaces or spectra. The original development is given in the three papers by Goodwillie \cite{gw90, gw91, goodcalc3},  motivated by the study of Waldhausen's algebraic $K$-theory of a space. A family of related theories grew out of this work;  our focus is on
the homotopy functor calculus and (to a lesser extent in this paper) the orthogonal calculus of Weiss \cite{weiss95}. The orthogonal calculus was developed to study functors from real inner-product spaces to topological spaces, such as $BO(V)$ and $TOP(V)$.

The model categorical foundations for the homotopy functor calculus and the orthogonal calculus have been established; see
Biedermann-Chorny-R\"ondigs \cite{BCR07}, Biedermann-R\"ondigs \cite{BRgoodwillie} and Barnes-Oman\cite{barnesoman13}.
However, we have found these to be incompatible.
Most notably, the symmetric multilinear
functors of Goodwillie appear to have no analogue in the theory of Weiss.
In this paper, we re-work the classification results of Goodwillie to make it
resemble that of the orthogonal calculus. In a subsequent paper \cite{barneseldred15}, we will use this
similarity to give a formal comparison between the orthogonal calculus and
Goodwillie's calculus of functors.

This re-working marks a substantial difference from the existing literature on model structures
(or infinity categories) for Goodwillie calculus, as they follow the
pattern of Goodwillie's work in a variety of different contexts (see also Pereira \cite{pereira13} or Lurie \cite{luriehigher}).
Our setup takes a more equivariant perspective and has the advantage of using one less adjunction and fewer categories than that of \cite{BRgoodwillie} and \cite{goodcalc3}.
In detail, we construct a new category
($\devcat$, Section \ref{subsec:devcat}) which will be the target
of an altered notion of the derivative over a point
($\diff_n$, Section \ref{subsec:diffnquillen}).
This approach simplifies the classification of homogeneous functors
in terms of spectra with $\Sigma_n$-action, whilst retaining Goodwillie's
original classification at the level of homotopy categories, see
Theorem \ref{thm:correctdervied}. It also provides a new characterisation of the
$n$-homogeneous equivalences, see Lemma \ref{lem:stability}
and clarifies some important calculations, see
Examples \ref{ex:deriv1} and \ref{ex:deriv2}.

\subsection{Recent History and Context}
What the family of functor calculi have most in common is that they associate, to an equivalence-preserving functor $F$, a tower (the Taylor tower of $F$) of functors

\[
\xymatrix{
& D_n F \ar[d] & D_{n-1} F \ar[d] & & D_1 F \ar[d] \\
\cdots \ar[r]&\P_n F \ar[r] & \P_{n-1} F \ar[r] &\cdots \ar[r] & \P_1 F \ar[r] & P_0 F\\
}
\]
where the $\P_nF$ have a kind of $n$-polynomial property, and for nice functors, the inverse limit of the tower, denoted $\P_\infty F$, is equivalent to $F$. The layers of the tower, $D_nF$, are then analogous to purely-$n$-polynomial functors -- called $n$-homogeneous.
Figure \ref{fig:homogclass} represents Goodwillie's classification of (finitary) $n$-homogeneous functors in terms of spectra with $\Sigma_n$-action. This classification is phrased in
terms of three equivalences of homotopy categories.
\begin{figure}[H]
\scalebox{.85}{$
\xymatrix{
\text{Ho}(n\text{-homog-Fun}(C, \Top)) \ar@<5pt>[r]^{\cite[\S 2]{goodcalc3}} & \text{Ho}(n\text{-homog-Fun}(C, \text{Sp})) \ar@<5pt>[r]^{\cite[\text{Thm }3.5]{goodcalc3}}  \ar[l] & \text{Ho}(\text{Symm-Fun}(C^n, \text{Sp})_{ml})  \ar@<5pt>[r]^-{\cite[\S 5]{goodcalc3}}  \ar[l] &\text{Ho}( \Sigma_n \circlearrowleft \text{Sp}) \ar[l]
}$}
\caption{Goodwillie's classification}
\label{fig:homogclass}
\end{figure}
Here, ``Ho" indicates that we are working with homotopy categories,
$n\text{-homog-Fun}(A,B)$ is the category of $n$-homogeneous functors from $A$ to $B$,
$C$ is either spectra (Sp) or spaces (Top) and $ \Sigma_n \circlearrowleft \text{Sp}$ denotes (Bousfield-Friedlander) spectra with an action of  $\Sigma_n$.
The category $(\text{Symm-Fun}(C^n, \text{Sp})_{ml})$ consists of
symmetric multi-linear functors of $n$-inputs:
those $F$ with $F(X_1, \ldots, X_n) \cong F(X_{\sigma(1)}, \ldots, X_{\sigma(n)})$
for $\sigma \in \Sigma_n$ and which are degree 1-polynomial in each input.

Goodwillie, in \cite{goodcalc3}, suggested that his classification would be well-served by being revised using the structure and language of model categories and hence phrased in terms of Quillen equivalences.
For the homotopy functor calculus, Biedermann, Chorny  and R\"ondigs \cite{BCR07} and Biedermann and R\"ondigs \cite{BRgoodwillie} completed Goodwillie's recommendation.
For simplicial functors with fairly general target and domain,
they follow the same pattern as Goodwillie's paper \cite{goodcalc3}.
This classification involves several intermediate categories, similar to Figure \ref{fig:homogclass}.
In Figure \ref{fig:BR},  $\scal$ denote based simplicial sets, $\scal^f$ denotes finite based simplicial sets and
$\Fun(\Sigma_n \wr (\scal^f)^{\smashprod n}, C)_{\ml}$ denotes a model
structure of symmetric multi-linear functors with target $C$ being simplicial sets or spectra.
\begin{figure}[H]
\[
\xymatrix@C+1.5cm{
&
\Fun(\Sigma_n \wr (\scal^f)^{\smashprod n}, \scal)_{ml}
\ar@<3pt>[r]^{(-)/\Sigma_n \circ \Delta_n}
\ar@<-3pt>[d]_{\Sigma^\infty}
&
\Fun(\scal^f, \scal)_{\nhomog}
\ar@<3pt>[l]^{\cref_n}
\ar@<-3pt>[d]_{\Sigma^\infty}
\\
{\Sigma_n} \lca \Sp
\ar@<3pt>[r]^{L_{\textrm{eval}_{S^0}}}
&
\Fun(\Sigma_n \wr (\scal^f)^{\smashprod n}, \Sp)_{\ml}
\ar@<-3pt>[u]_{\Omega^\infty}
\ar@<3pt>[r]^{(-)/\Sigma_n \circ \Delta_n}
\ar@<3pt>[l]^{\textrm{eval}_{S^0}}
&
\textrm{Fun}(\scal^f, \Sp)_{\nhomog}
\ar@<3pt>[l]^{\cref_n}
\ar@<-3pt>[u]_{\Omega^\infty}
}
\]
\caption{Classification of Biedermann-R\"ondigs for $\ccal = \scal^f, \dcal=\scal$ \cite[(6.2)]{BRgoodwillie}}
\label{fig:BR}
\end{figure}
For the orthogonal calculus, the classification of $n$-homogeneous functors
by Weiss \cite[Section 7]{weiss95} was re-worked and promoted to a description in terms of Quillen equivalences of model categories in Barnes and Oman \cite{barnesoman13}. In the notation of this paper, their classification diagram (\cite[p.962]{barnesoman13}) is Figure \ref{fig:orthclass}.
Without going into detail, the left hand category is a model structure for $n$-homogeneous functors and the right hand category is a spectra with an action of $O(n)$.

\begin{figure}[H]
\begin{center}
$\xymatrix{
(n\text{-homog-Fun}(\jcal_0, \Top)) \ar@<5pt>[r]&\orthdevcat  \ar[l]  \ar@<5pt>[r]&O(n)\circlearrowleft \text{Sp} \ar[l]
}$
\end{center}
\caption{Weiss's classification}
\label{fig:orthclass}
\end{figure}

\subsection{Re-working the classification}
The middle category of Figure \ref{fig:orthclass} is not a kind of orthogonal version of symmetric multilinear functors.
Indeed, there appears to be no such analog in the orthogonal setting.
With that in mind, as well as our goal of a model-category comparison of the two calculi, we re-work the homogeneous classification for homotopy functors without using symmetric multilinear functors.
We instead use the homotopy functor analog of the middle category of Figure \ref{fig:orthclass}, which we denote $\devcat$.  This notation reflects our choice to use the category $\wcal \Top$
(continuous functors from finite based CW-complexes to based topological spaces)
as our model for homotopy functors from spaces to spaces, which we will say more about later.

In this paper, we construct the diagram of Quillen equivalences of Figure \ref{fig:fig}. The top line of this diagram provides an alternate classification of $n$-homogeneous functors and is analogous to Figure \ref{fig:orthclass}. We also compare our classification with the model category of symmetric multi-linear functors.
\begin{figure}[H]
\[
\xymatrix@C+1cm@R+1.2cm{
{\wcal \Top_{\nhomog}}
\ar@<3pt>[d]^-{\cref_n}
\ar@<-3pt>[r]_-{\diff_n}
&
\ar[]!<-1ex,0ex>;[dl]!<-1ex,0ex>_{L_{\obdiag}}
\devcat_{\stable}
\ar@<3pt>[r]^-{\wcal \smashprod_{\wcal_n} -}
\ar@<-3pt>[l]_-{\qquad (-)/\Sigma_n \circ \mapdiag^\ast}
&
\ar@<3pt>[l]^-{\phi_n^\ast}
\Sigma_n\circlearrowleft \wcal Sp
\\
\symfun( \wcal^n, \Top)_{\ml}
\ar@<4pt>[u]^-{(-)/\Sigma_n \circ \Delta_n}
\ar[]!<2ex,0ex>;[ur]!<2ex,0ex>_-{\obdiag^*}
}
\]
\caption{Diagram of Quillen equivalences}
\label{fig:fig}
\end{figure}

We show that the derivative construction (denoted $\diff_n$, see Definition \ref{def:cref}) in the setting of spaces over a point naturally takes values in $\devcat$ and that this construction is a Quillen equivalence. We furthermore construct a Quillen equivalence between $\devcat$ and spectra with a $\Sigma_n$-action (denoted $\Sigma_n\circlearrowleft \wcal Sp$).
Our new classification then resembles the orthogonal version of Barnes and Oman \cite{barnesoman13},
which involves one less adjunction and fewer categories than that of \cite{BRgoodwillie} and \cite{goodcalc3}.

The category $\devcat$ is a relatively standard construction
of equivariant spectra, similar to the constructions of equivariant
orthogonal spectra of Mandell and May \cite{mm02}. If we are prepared to
work with this category rather than spectra with a $\Sigma_n$-action
we have a one-stage classification of homogeneous functors in terms of spectra.
We also claim that our category $\devcat$ is no more complicated than the category of symmetric functors. See Section \ref{sec:intermediate}
 for a definition of $\devcat$ and Section \ref{sec:comp-symm-lin} for a comparison with symmetric multi-linear functors.

Another useful aspect of this work is that we choose to work with the category $\wcal\Top$ as our model of homotopy functors. Every object of this category is a homotopy functor, which removes the need for the homotopy functor model structure, a prominent feature of \cite{BCR07, BRgoodwillie}. We comment more on this in Section \ref{subsec:catwtop}.

\paragraph{In a sequel to this paper:}
Given a functor $F \in \wcal \Top$ we can consider
the functor of vector spaces $V \mapsto F (S^V)$ (where $S^V$ 
is the one-point compactification of $V$),
which we call the restriction of $F$.
We show that the
restriction of an $n$-homogeneous functor
(in the sense of Goodwillie) gives an $n$-homogeneous functor
(in the sense of Weiss).
Similarly, we show that
restriction sends $n$-excisive functors to $n$-polynomial functors.
These statements currently have the status of folk-results;
we will provide formal proofs in \cite{barneseldred15}.

Our primary aim in the sequel is to show that when $F$ is analytic
the restriction of the Goodwillie tower of $F$
and the Weiss tower associated to the
functor $V \mapsto F (S^V)$ agree.
From this, we obtain two applications. Firstly, we prove convergence of the
Weiss tower of the functor $V \mapsto BO(V)$ 
(as claimed in \cite[Page 13]{aroneweiss}).
Secondly, we lift the comparisons of the two forms of calculus to
a commutative diagram of model categories and Quillen pairs, see \cite[Section 5]{barneseldred15}.

With this aim in mind, working with a \textit{topologically enriched} category of homotopy functors rather than \textit{simplicially enriched} is necessary:
there is no good way to study orthogonal calculus using simplicial enrichments,
due to the continuity of the $O(n)$ actions.
Similarly, while \cite{BRgoodwillie} considers the case of homotopy functors between
categories other than simplicial sets or spectra, there is no
analogous generalisation for orthogonal calculus (since the domain is
the category $\jcal_0$ of real inner product spaces and linear isometries).
As our overall aim is a comparison between these two kinds of calculus,
we choose to work in the specific context of $\wcal \Top$ in this paper.

\subsection{Organisation}

In Section \ref{sec:modelgoodwillie} we remind the reader of some
important model category definitions and introduce $\wcal \Top$, the category of
functors that we will use to model homotopy functors.
We then follow the structure of \cite{BRgoodwillie} and establish
model structures on $\wcal \Top$ analogous to their work.
Specifically, in Section \ref{sec:calcmod} we define the cross effect model structure, the $n$-excisive model structure and the $n$-homogeneous model
structure.

With these basics completed, we can turn to the construction of the new category $\devcat$.
In Section \ref{sec:intermediate}, we start by giving the construction of the stable model structure on spectra with a $\Sigma_n$--action and then move on to constructing the stable model structure on $\devcat$. Section \ref{sec:compare} establishes the Quillen equivalence between $\devcat$ and spectra with a $\Sigma_n$--action. The Quillen equivalence between $n$-homogeneous functors and $\devcat$ induced by differentiation is established in Section \ref{sec:diffquillen}.  This is the primary result of this part of the paper. We finish by giving the Quillen equivalence between symmetric multilinear functors and $\devcat$ in Section \ref{sec:comp-symm-lin}.

\paragraph{Acknowledgements}
Part of this work was completed while the first author was supported by an EPSRC grant (EP/H026681/1), and the latter by the Humboldt Prize of Michael Weiss.  The authors would also like to thank Greg Arone,
Georg Biedermann, Oliver R\"ondigs and Michael Weiss for a number
of stimulating and helpful discussions.

%
%
\section{Model structures on spaces and functors}\label{sec:modelgoodwillie}
%

\subsection{Model category background}\label{subsec:modcats}

The conditions we use are essentially those which make arbitrary model categories most like spaces:
the ability to pushout or pullback weak equivalences (properness) and a good notion of cellular approximation (cofibrantly generated). We take our definitions and results from Hirschhorn \cite{hir03} and May and Ponto \cite{mp12}.

Similarly to Mandell et al.\ \cite{mmss01}, we use \emph{topological}, rather than simplicial model categories. When we say a \emph{category} is topological we mean that it is enriched in $\Top$ in the sense of Kelly \cite{kell05} (the category has spaces of morphisms and continuous composition).
Whereas a \emph{model category} is said to be topological if it satisfies the following definition, which is analogous to the concept of a simplicial model category.

\begin{definition} \label{def:topological}\cite[Definition 5.12]{mmss01}
 For maps $i: A \ra X$ and $p: E \ra B$ in a model category $\mathcal{M}$, let the map below
be the map of spaces induced by $\mathcal{M}(i, id)$ and  $\mathcal{M}(i, p)$ after passing to the pullback.
\[
\mathcal{M} (i^\ast, p_\ast): \mathcal{M}(X, E) \ra \mathcal{M} (A, E) \x_{\mathcal{M} (A,B)} \mathcal{M} (X,B)
\]

A model category $\mathscr{M}$ is \textbf{topological}, provided that $\mathcal{M} (i^\ast, p_\ast)$ is a Serre fibration of spaces if $i$ is a cofibration and $p$ is a fibration; it is a weak equivalence if, in addition, either $i$ or $p$ is a weak equivalence.
\end{definition}

\begin{definition}\cite[Definition 11.1.1] {hir03}
Let $\mathcal{M}$ be a model category, and let the following be a commutative square in $\mathcal{M}$:
\[
\xymatrix{
A \ar[r]^f \ar[d]_i & B \ar[d]^j\\
C \ar[r]_g & D\\
}.
\]
$\mathcal{M}$ is called \textbf{left proper} if, whenever $f$ is a weak equivalence, $i$ a cofibration, and the square is a pushout, then $g$ is also a weak equivalence.
$\mathcal{M}$ is called \textbf{right proper} if, whenever $g$ is a weak equivalence, $j$ a fibration, and the square is a pullback, then $f$ is also a weak equivalence.
$\mathcal{M}$ is called \textbf{proper} if it is both left and right proper.
\end{definition}
This concept can also be phrased as the set of (co)fibrations being closed under (co)base change.

\begin{definition}\cite[Definition 13.2.1]{hir03}
A \textbf{cofibrantly generated model category} is a model category $\mcal$ with sets of maps $I$ and $J$ such that $I$ and $J$ support the small object argument (see \cite[Definitions 15.1.1 and 15.1.7.]{mp12}) and
\begin{enumerate}
\item a map is a trivial fibration if and only if it has the right lifting property with respect to every element of $I$, and
\item a map is a fibration if and only if it has the right lifting property with respect to every element of $J$.
\end{enumerate}
\end{definition}

\subsection{Model structures on spaces}
There are three model structures that we use on $\Top$, the $q$-(``Quillen") model structure, %
the $h$-(``Hurewicz) model structure and the $m$-(``mixed") model structure.

\begin{theorem}\cite[Theorem 17.1.1, Corollary 17.1.2]{mp12}
The category $\Top$ of based spaces has a monoidal and proper
model structure, the \textbf{$h$-model structure}, where the
weak equivalences are the homotopy equivalences;
the fibrations are the Hurewicz fibrations and the
cofibrations are the $h$-cofibrations (those  maps with the homotopy extension property).
All spaces are both fibrant and cofibrant.
\end{theorem}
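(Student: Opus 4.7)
The theorem is Str\o m's classical result (adapted by May and Ponto to based spaces), so my plan is to follow Str\o m's original proof strategy. Two-out-of-three for homotopy equivalences and retract stability of all three classes are immediate from the definitions, since Hurewicz fibrations and $h$-cofibrations are characterised by lifting and extension properties that automatically pass to retracts. Existence of limits and colimits in $\Top$ is standard.

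The heart of the argument is the two factorization axioms: any map $f \co X \to Y$ must be factored (i) as an $h$-cofibration followed by an acyclic Hurewicz fibration, and (ii) as an acyclic $h$-cofibration followed by a Hurewicz fibration. For (i) I would factor $f$ through the reduced mapping cylinder $M f$: the inclusion $X \hookrightarrow M f$ carries an explicit Str\o m structure exhibiting it as an $h$-cofibration, and the canonical deformation retraction $M f \to Y$ is a homotopy equivalence which Str\o m shows can also be arranged to be a Hurewicz fibration. For (ii) I would use the Moore mapping path space $N f$: the inclusion $X \to N f$ is a homotopy equivalence and the evaluation $N f \to Y$ is a Hurewicz fibration. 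Upgrading $X \to N f$ to an $h$-cofibration is the main obstacle, and is the technical core of Str\o m's paper; it is resolved by a delicate combination of mapping cylinder and mapping path space constructions together with careful patching of the associated Str\o m structures. Once the factorizations are in hand, the lifting axioms reduce to the classical theorem that an acyclic $h$-cofibration has the left lifting property against every Hurewicz fibration (and dually), which is proved by an explicit deformation-retract argument.

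With the model structure in place, the remaining assertions are standard. Left and right properness follow from the gluing lemma: pushouts of homotopy equivalences along $h$-cofibrations remain homotopy equivalences, and dually for pullbacks along Hurewicz fibrations. Monoidality is the pushout-product axiom, which I would verify by exhibiting a Str\o m structure on the pushout-product of two $h$-cofibrations built from the Str\o m structures of the inputs, and handling the acyclic case via the retract-of-a-deformation argument. Finally, every based space is fibrant because the terminal map is trivially a Hurewicz fibration, and cofibrancy follows from the convention (implicit in May and Ponto's treatment) that basepoints are taken to be non-degenerate, so that $\ast \to X$ has the homotopy extension property.
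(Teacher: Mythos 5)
The paper does not prove this statement; it is quoted verbatim from May--Ponto \cite[Theorem 17.1.1, Corollary 17.1.2]{mp12}, and your sketch follows essentially the same Str\o m-style argument that the cited source gives: lifting of acyclic $h$-cofibrations against Hurewicz fibrations via deformation retracts, the mapping cylinder and mapping path space factorizations with the (acyclic cofibration, fibration) factorization as the technical core, the gluing lemma for properness, and Str\o m structures on pushout-products for monoidality. On that level your outline is sound.

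One justification, however, is wrong as stated: cofibrancy of all based spaces does \emph{not} rest on a convention that basepoints are non-degenerate. Non-degeneracy of the basepoint is the condition that $\ast \to X$ be an \emph{unbased} $h$-cofibration (an NDR pair), and it genuinely fails for some based spaces; if the theorem needed it, the claim ``all spaces are cofibrant'' would be false. The correct observation is that in the \emph{based} $h$-model structure the cofibrations are the maps with the based homotopy extension property, and $\ast \to X$ satisfies this trivially for every $X$: a based homotopy out of the point is necessarily constant at the basepoint, so the constant extension $H_t = g$ always works. This is exactly why May and Ponto can assert fibrancy and cofibrancy of all objects with no hypothesis on basepoints; the non-degeneracy condition only becomes relevant when comparing the based and unbased homotopy theories. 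The fix is one line, but as written your argument proves a weaker statement than the one claimed.
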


\begin{theorem}\cite[Theorem 17.2.2, Corollary 17.2.4]{mp12}
The category $\Top$ of based spaces has a cofibrantly
generated monoidal and proper model structure, the \textbf{$q$-model structure}, where
the weak equivalences are the weak homotopy equivalences;
the fibrations are the Serre fibrations (those maps that satisfy the
right lifting property with respect to $J_{\Top}$ as defined below).
The cofibrations are the $q$-cofibrations (defined by the left
lifting property).
All spaces are fibrant.
\end{theorem}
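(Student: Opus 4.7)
The plan is to apply the recognition principle for cofibrantly generated model structures \cite[Theorem 15.3.6]{mp12}. First I would fix the generating data: let $I_{\Top} = \{S^{n-1}_+ \hookrightarrow D^n_+ : n \geq 0\}$ be the generating cofibrations and $J_{\Top} = \{(D^n \times \{0\})_+ \hookrightarrow (D^n \times [0,1])_+ : n \geq 0\}$ be the generating acyclic cofibrations. Since every domain is compact, each is small with respect to sequential colimits of relative cell complexes, so the small object argument applies and produces the required functorial factorizations.

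With these generators fixed, the heart of the proof is the verification of Kan's lifting criterion. This splits into three classical checks: (a) a map has the right lifting property against $I_{\Top}$ if and only if it is both a Serre fibration and a weak homotopy equivalence, which is the standard Serre lifting theorem established via obstruction theory on relative CW-pairs; (b) every relative $J_{\Top}$-cell complex is a weak homotopy equivalence, since each generating acyclic cofibration is a strong deformation retract and this property is preserved under pushout along arbitrary maps and under transfinite composition in $\Top$; (c) relative $J_{\Top}$-cell complexes lie in the class of retracts of $I_{\Top}$-cell complexes, which one sees by exhibiting each generating horn inclusion as a retract of a finite cell complex built from $I_{\Top}$. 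Together (a)--(c) feed directly into the recognition theorem to produce the model structure with the stated classes.

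The remaining properties are handled separately and are short. Monoidality reduces to the pushout-product axiom on generators, which follows from the homeomorphism $(S^{m-1}_+ \hookrightarrow D^m_+) \smsh (S^{n-1}_+ \hookrightarrow D^n_+) \cong (S^{m+n-1}_+ \hookrightarrow D^{m+n}_+)$ and the analogous computation mixing $I_{\Top}$ with $J_{\Top}$. Left properness transfers from the $h$-model structure once one checks that every $q$-cofibration is an $h$-cofibration, which follows from the fact that the standard disks deformation retract onto their boundaries. Right properness is the classical statement that pulling back a weak homotopy equivalence along a Serre fibration preserves weak equivalences, proved by comparing long exact sequences of homotopy groups. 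Fibrancy of every space is immediate because the unique map to a point has the right lifting property against $J_{\Top}$ tautologically.

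The main technical obstacle will be step (b): confirming that transfinite compositions of pushouts of generating acyclic cofibrations remain weak equivalences. The standard route is to argue that pushouts of the generating maps yield relative strong deformation retracts, and then to concatenate the individual deformations along the transfinite filtration to produce a deformation retraction onto the colimit. Care is needed with the topology of the transfinite colimit, but the argument goes through because each stage is a closed $T_1$ inclusion, so the colimit inherits the expected weak topology and the concatenated homotopy is continuous.
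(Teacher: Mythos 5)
This theorem is not proved in the paper at all --- it is imported verbatim from May and Ponto \cite[Chapter 17]{mp12} --- so the only meaningful comparison is with the standard proof in that reference, and your outline is essentially that proof: the recognition principle applied to $I_{\Top}$ and $J_{\Top}$, Serre's characterisation of the maps with the right lifting property against $I_{\Top}$ as the acyclic Serre fibrations, the deformation-retract argument for relative $J_{\Top}$-cell complexes, the pushout-product computation $(S^{m-1}_+ \to D^m_+) \Box (S^{n-1}_+ \to D^n_+) \cong (S^{m+n-1}_+ \to D^{m+n}_+)$ for monoidality, the gluing lemma for left properness, and the long exact sequence for right properness. So the approach is correct and the same as the cited source.

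Two local points need repair. First, your justification that every $q$-cofibration is an $h$-cofibration --- ``the standard disks deformation retract onto their boundaries'' --- is false as written: $D^n$ does not deformation retract onto $S^{n-1}$. What is actually needed is that $(D^n, S^{n-1})$ is an NDR pair, i.e.\ that $D^n \times [0,1]$ deformation retracts onto $D^n \times \{0\} \cup S^{n-1} \times [0,1]$; this shows the generators are $h$-cofibrations, and the class of $h$-cofibrations is closed under pushout, transfinite composition and retract. Second, your resolution of the ``main technical obstacle'' in step (b) is the weakest part: one cannot in general concatenate a transfinite (uncountable) family of deformation retractions into a single continuous homotopy, so the argument breaks at limit ordinals beyond $\omega$. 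The robust route avoids this entirely: each stage of a relative $J_{\Top}$-cell complex is a closed $T_1$ inclusion and a weak homotopy equivalence, and since $S^k$ and $S^k \times [0,1]$ are compact, any map from them into the transfinite colimit factors through some stage; hence homotopy groups commute with the colimit and the composite is a weak homotopy equivalence. With these two repairs your proof is the standard one.
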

The $q$-model structure on spaces is cofibrantly generated.
The generating cofibrations ($I_{\Top}$) are the inclusions
$S^{n-1}_+ \ra D^n_+$, $n \geq 0$ and the
generating acyclic cofibrations ($J_{\Top}$)
are the maps $i_0 : D^n_+\ra (D^n \x I)_+$, $n \geq 0$.

\begin{theorem} \cite[Theorem 17.4.2, Corollary 17.4.3]{mp12}
The category $\Top$ of based spaces has a monoidal and proper model structure, the \textbf{$m$-model structure}, where the
weak equivalences are the weak homotopy equivalences;
the fibrations are the Hurewicz fibrations
and the cofibrations defined by the left
lifting property with respect to Hurewicz fibrations which are also $q$-equivalences.
\end{theorem}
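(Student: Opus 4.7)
The plan is to invoke Cole's mixing theorem, applied to the two model structures on $\Top$ already established: the $h$-structure and the $q$-structure. The key hypothesis is that $W_h \subseteq W_q$ (every homotopy equivalence is a weak homotopy equivalence) together with $F_h \subseteq F_q$ (every Hurewicz fibration is a Serre fibration). With these inclusions in hand, Cole's theorem produces a mixed model structure $\Top_m$ whose weak equivalences are $W_q$, whose fibrations are $F_h$, and whose cofibrations are those maps with the left lifting property against $F_h \cap W_q$.

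First I would verify the $2$-out-of-$3$, retract, and lifting axioms. The $2$-out-of-$3$ and retract properties for $W_m = W_q$ are inherited directly from the $q$-structure. For the lifting axioms, the crucial observation is that the class of trivial fibrations $F_h \cap W_q$ is contained in the $q$-trivial fibrations, so these maps have the right lifting property against all $q$-cofibrations; conversely, maps in $F_h \cap W_q$ have the RLP against the $m$-cofibrations by definition of the latter.

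Next I would establish the two factorizations via the standard two-step argument of Cole. Any map $f$ factors in the $q$-structure as $f = pi$, with $i$ a $q$-cofibration and $p$ a Serre fibration that is a weak homotopy equivalence. Then refactor $p$ in the $h$-structure as $p = qj$ with $j$ a trivial $h$-cofibration and $q$ a Hurewicz fibration. Since $p$ is a weak equivalence and $j$ is a homotopy equivalence, $q$ is also a weak equivalence, so $q \circ (j \circ i)$ gives the desired factorization into an $m$-cofibration followed by an $m$-trivial fibration. The other factorization is produced symmetrically, starting from an $h$-factorization and refining with a $q$-factorization, and one checks that the resulting left-hand map is a weak equivalence (it is in fact a homotopy equivalence composed with a $q$-acyclic cofibration).

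Properness follows from the corresponding properties of the $h$- and $q$-structures: right properness uses that Hurewicz fibrations are Serre fibrations together with right properness of $\Top_q$, while left properness uses that $m$-cofibrations are in particular $h$-cofibrations (hence closed inclusions) together with left properness of $\Top_q$. The pushout-product axiom for the monoidal structure can be reduced to the analogous axioms in the $h$- and $q$-structures. The main obstacle, where I would expect to spend the most care, is in the factorization step and in the monoidality argument: one needs a concrete enough characterisation of $m$-cofibrations, namely as $h$-cofibrations that are, relative to their sources, retracts of $q$-cell complexes, in order to see that the class is preserved by pushouts and smash products and that the two-step factorization described above actually lands in this class.
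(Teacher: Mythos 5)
Your proposal is correct and is essentially the proof in the cited source: May and Ponto establish the $m$-structure exactly by Cole's mixing theorem applied to the $h$- and $q$-structures (the paper itself gives no proof, only the citation). One caution on the step you flag as delicate: the characterisation of $m$-cofibrations you want is ``$h$-cofibrations that factor as a $q$-cofibration followed by an $h$-equivalence'' (equivalently, $h$-cofibrations that are cofibre homotopy equivalent under their source to $q$-cofibrations), not ``$h$-cofibrations that are retracts of relative cell complexes'' --- the latter class is just the $q$-cofibrations, which is strictly smaller; with the corrected characterisation your reduction of the pushout-product axiom to the $h$- and $q$-structures goes through as in \cite[Section 17.4]{mp12}, and note also that the (acyclic cofibration, fibration) factorisation needs no two-step refinement, since the $m$-acyclic cofibrations coincide with the $h$-acyclic cofibrations.
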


Note that every $m$-cofibration is a $h$-cofibration and
the $h$-cofibrations are closed inclusions of spaces, see
Mandell et al. \cite[Page 457]{mmss01}.

\subsection{The category \texorpdfstring{$\wcal \Top$}{WTop} of topological functors}\label{subsec:catwtop}

Goodwillie calculus studies equivalence-preserving functors from
the category of based spaces to itself. %
In this section we introduce $\wcal \Top$ and show how it is
a good model for these.

Let $\wcal$ be the category of based spaces homeomorphic to finite CW complexes. We note immediately that $\wcal$ is $\Top$-enriched, but not $\wcal$-enriched. We define $\wcal \Top$ to be the category of \textbf{$\wcal$-spaces}: continuous functors from
$\wcal$ to $\Top$ (for full details see Mandell et al.\ \cite{mmss01}).
In particular, an $X \in \wcal \Top$ consists of the following information:
a collection of based spaces $X(A)$
for each $A \in \wcal$ and a collection of maps
of based spaces
\[
X_{A,B} \co \wcal(A,B) \longrightarrow \Top(X(A), X(B))
\]
for each pair $A$, $B$ in $\wcal$.
These maps must be compatible with composition and also
associative and unital.
The map $X_{A,B}$ induces a structure map:
\[
X(A) \smashprod \wcal(A,B) \longrightarrow X(B)
\]
The category $\wcal \Top$ is complete and cocomplete with limits and colimits taken objectwise.
This category is tensored and cotensored over based spaces.
For a functor $X$ in $\wcal \Top$ and a based space $A$, the tensor $X \smsh A$ is
the objectwise smash product. The cotensor $\Top(A,X)$ is the objectwise function space.
The category $\wcal \Top$ is also enriched over based spaces, with the space
of natural transformations from $X$ to $Y$ given by the enriched end (for more on (co)ends, see Kelly \cite[Section 3.10]{kell05})
\[
\nat(X,Y) = \int_{A \in \wcal} \Top(X(A), Y(A))
\]
The category $\wcal \Top$ is a closed symmetric monoidal category
by Mandell et al. \cite[Theorem 1.7]{mmss01}.
The smash product and internal function object are defined as
follows, where $X$ and $Y$ are objects of $\wcal \Top$ and
$A \in \wcal$.
\[\begin{array}{rcl}
(X \smsh Y)(A) & = & \int^{B,C \in \wcal}
X(B) \smsh Y(C) \smsh \wcal(B \smsh C, A) \\
\Hom(X,Y)(A) & = & \int_{B \in \wcal} \Top(X(B),Y(A \smsh B))
\end{array}
\]
There is another important natural construction that we will use.
Let $X$ be an object of $\wcal \Top$,
then the \textbf{assembly map} of $X$ is
\[
a_{A,B} \co X(A) \smashprod B \to X(A \smashprod B).
\]
It may be defined as the following composition,
where the final map is the structure map of $X$.
\[
\xymatrix{
X(A) \smashprod B
\cong
X(A) \smashprod \wcal(S^0, B)
\ar[rr]^-{\id \smashprod (A \smashprod -)}&&
X(A) \smashprod \wcal(A, A \smashprod B)
\ar[r]&
X(A \smashprod B)
}
\]
The existence of the assembly map tells us that
$X$ takes homotopic maps to homotopic maps (compose
the assembly map with $X$ applied to the homotopy between the maps). Since $\wcal$ consists of CW--complexes, it follows that
$X$ preserves weak homotopy equivalences, that is,
$X$ is a homotopy functor.

We record here an important observation about objects of
$\wcal \Top$.
Since $\id_\ast$ is the basepoint of $\wcal (\ast, \ast)$,
the map $\id_{X(\ast)} = X(\id_\ast)$ is the base point of
$\Top(X(\ast), X(\ast))$. Hence $X(\ast) = \ast$
for any $X \in \wcal$. We therefore say that every functor of
$\wcal \Top$ is \textbf{reduced}.

The category $\wcal$ has a small skeleton $\skel \wcal$, which fixes set-theoretic problems with the totality of natural transformations between functors from $\Top$ to $\Top$.
In particular, it ensures that all small limits exist in $\wcal \Top$.
Biedermann and R\"ondigs \cite{BRgoodwillie}
work (in particular) with the simplicial analogue of $\wcal \Top$
and  considers Goodwillie calculus in terms of
simplicial functors from the category of finite simplicial sets $\scal^f$ to the category of all simplicial sets $\scal$.
A nice discussion of the set-theoretic problem can be
found in Biedermann-Chorny-R\"{o}ndigs \cite[Section 2]{BCR07}.

We now want to equip the category $\wcal \Top$ with a model structure,
the following result is  \cite[Theorem 6.5]{mmss01}.

\begin{lemma}\label{lem:projmod}
The \textbf{projective model structure} on the category $\wcal \Top$ has fibrations and
weak equivalences which are defined objectwise in the $q$-model structure of spaces.
The cofibrations are determined by the left lifting property.
In particular they are objectwise $m$-cofibrations of spaces.
This model structure is proper,
cofibrantly generated and  topological.
The generating sets are given below, where
$\skel \wcal$ denotes a skeleton of $\wcal$.
\[
\begin{array}{rcl}
I_{\wcal \Top} & = & \{
\wcal(X,-) \smsh i \ | \ i \in I_{\Top}, X \in \skel \wcal  \} \\
J_{\wcal \Top} & = & \{
\wcal(X,-) \smsh j \ | \ j \in J_{\Top}, X \in \skel \wcal  \}
\end{array}
\]
\end{lemma}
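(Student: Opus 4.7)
The plan is to invoke Kan's recognition theorem for cofibrantly generated model structures (Hirschhorn, Theorem 11.3.2 / May--Ponto 15.3.4) applied to the two proposed generating sets $I_{\wcal \Top}$ and $J_{\wcal \Top}$. The whole argument pivots on a single adjunction: for each $X \in \skel \wcal$, the representable functor $F_X = \wcal(X,-) \smsh (-) \co \Top \to \wcal \Top$ is left adjoint to the evaluation functor $\mathrm{ev}_X \co \wcal \Top \to \Top$, since
\[
\nat\bigl(\wcal(X,-) \smsh A, \, Y\bigr) \cong \Top\bigl(A, Y(X)\bigr)
\]
by the enriched Yoneda lemma. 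This identifies, by adjunction, the right lifting properties against $I_{\wcal \Top}$ and $J_{\wcal \Top}$ with the objectwise right lifting property against $I_{\Top}$ and $J_{\Top}$ in each factor, matching the proposed classes of fibrations and trivial fibrations.

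Next I would verify the smallness hypotheses for the small object argument. The generating cells $\wcal(X,-) \smsh S^{n-1}_+$ are objectwise of the form $\wcal(X,Y) \smsh S^{n-1}_+$, and the relevant smallness reduces to that of $S^{n-1}_+$ and $D^n_+$ in $\Top$, which holds because these are compact. Since $F_X$ preserves colimits as a left adjoint, the domains of $I_{\wcal \Top}$ and $J_{\wcal \Top}$ are small with respect to their own cell complexes. Having established this, the identification of fibrations and trivial fibrations via the adjunction tells us: a map $f \co Y \to Z$ is $I_{\wcal \Top}$-injective iff each $f(X)$ is a trivial $q$-fibration, and is $J_{\wcal \Top}$-injective iff each $f(X)$ is a $q$-fibration. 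These coincide with the proposed fibrations and trivial fibrations.

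The last recognition-theorem hypothesis, that every relative $J_{\wcal \Top}$-cell complex is an objectwise weak equivalence, is where the main technical content lies. A pushout of $\wcal(X,-) \smsh j$ evaluated at $Y$ is a pushout of $\wcal(X,Y) \smsh j$ in $\Top$, and since $j \in J_{\Top}$ is a trivial $q$-cofibration, smashing with the space $\wcal(X,Y)$ yields another trivial $q$-cofibration (it is an $m$-cofibration and a weak equivalence); colimits and cobase changes of such are preserved objectwise, and since trivial $q$-cofibrations are closed under pushout and transfinite composition in $\Top$, the conclusion follows objectwise. This same bookkeeping also shows that cofibrations are objectwise $m$-cofibrations, since each generator is.

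It remains to verify that the model structure is proper and topological. Properness follows objectwise from properness of the $q$-model structure on $\Top$, since limits, colimits, weak equivalences, cofibrations and fibrations are all detected objectwise (for cofibrations, it is enough to know they are $m$-cofibrations objectwise, which is preserved under pushout). For the topological axiom of Definition \ref{def:topological}, I would check the pushout-product condition on the generators: the map $\nat(i^\ast, p_\ast)$ associated to $i \in I_{\wcal \Top}$ and a fibration $p$ reduces via the defining adjunction and the end formula for $\nat$ to a pushout-product of maps in $\Top$ of the form $\Top(i', p(X)_\ast)$, which is a Serre fibration (and acyclic when $i$ or $p$ is) by the topological structure on $\Top_q$. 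The main obstacle is this last compatibility check: keeping track of the enriched end, adjunction, and pushout-product constructions simultaneously — but in each case one reduces cleanly to the known topological property of $\Top$ with its $q$-model structure.
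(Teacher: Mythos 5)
Your proof is correct and is essentially the standard argument: the paper itself offers no proof of this lemma, simply quoting \cite[Theorem 6.5]{mmss01}, whose proof proceeds exactly as you do --- via the free--evaluation adjunctions $\wcal(X,-)\smsh(-) \dashv \mathrm{ev}_X$, the recognition theorem for cofibrantly generated model structures, and the observation that the generating (acyclic) maps are objectwise (acyclic) $h$-cofibrations. The one small imprecision is calling $\wcal(X,Y)\smsh j$ a ``trivial $q$-cofibration'': it need not be a $q$-cofibration, but it is an acyclic $h$-cofibration (indeed an acyclic $m$-cofibration, since mapping spaces between finite CW complexes have the homotopy type of CW complexes), and that is all the $J$-cell-complex argument requires.
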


Recall from Goodwillie \cite[Definition 5.10]{goodcalc3}
that a homotopy functor from $\Top$ to $\Top$ is said to be
\textbf{finitary} if it commutes with filtered
homotopy colimits. %
Such functors are determined by  their restriction to $\wcal$.
Since any space $A$ is naturally weakly equivalent to a homotopy colimit
of finite CW-complexes $\hocolim_n A_n$, we can extend a homotopy functor $X \in \wcal \Top$
by the formula $X(A) = \hocolim_n X(A_n)$ to obtain a
finitary homotopy functor from $\Top$ to itself.

To relate $\wcal \Top$ to the work of Biedermann and R\"ondigs,
consider the category of simplicial functors from the category
of finite based simplicial sets to the category of based simplicial sets,
$\Fun(\scal^{f}, \scal)$. This category can be equipped the
homotopy functor model structures of \cite[Section 4]{BRgoodwillie}.
It is then an exercise left to the enthusiast to show that
$\wcal \Top$ with its projective model structure is Quillen equivalent to
$\Fun(\scal^{f}, \scal)$ with the homotopy functor model structure. The result is
a consequence of the simplicial approximation theorem, which implies that a finite CW
complex is homotopy equivalent to the realisation of a finite simplicial complex.

\section{Model structures for Goodwillie calculus}\label{sec:calcmod}

In this section, we explain how to construct model
categories of $n$-excisive functors and $n$-homogeneous functors.
Only brief details are given, as the method is similar to that of \cite{BRgoodwillie}
and Barnes and Oman \cite{barnesoman13}.  Many of the following constructions and definitions may be found originally in \cite{goodcalc3} .

%
\subsection{The cross effect model structure}\label{subsec:crefmod}
%

We need the cross effect and the functor $\diff_n$ (defined below)
to be right Quillen functors for the
classification of the $n$-homogeneous functors.
That is, if $f \co F \to G$ is a fibration of $\wcal \Top$, we need
$\diff_n(f):\diff_n F \to \diff_n G$
(and the same for the cross effect) to be an
objectwise fibration of $\wcal \Top$. This does not hold for the
projective model structure, as explained in the introduction to
\cite[Section 3.3]{BRgoodwillie}.

Similar to Biedermann and R\"ondigs (albeit topologically rather than simplicially), we introduce another model structure
on $\wcal \Top$ that is Quillen equivalent to the
projective model structure. This alternative model
structure will be called the \textbf{cross effect model structure} (see Theorem \ref{thm:crefmodel}),
It has the same weak equivalences as the projective model structure.

\begin{definition} \label{def:cref}
For $F \in \wcal \Top$ and an $n$-tuple of spaces in $\wcal$, $(X_1, \ldots, X_n)$, the \textbf{$n^{th}$--cross effect of $F$
at $(X_1, \ldots, X_n)$} is the space
\[
\cref_n(F)(X_1, \ldots, X_n) = \nat(\bigsmashprod{l=1}{n} \wcal(X_l,-), F)
\]
Pre-composing $\cref_n(F)$ with the diagonal map
$\wcal(X,Y) \to \bigsmashprod{i=1}{n} \wcal(X,Y)$
yields an object of $\wcal \Top$
which we call $\diff_n(F)$, which in keeping with language of orthogonal calculus, is the
$n^{th}$ \textbf{(unstable) derivative}.
That is
\[
\diff_n(F)(X) = \nat(\bigsmashprod{l=1}{n} \wcal(X,-), F).
\]
\end{definition}

In Section \ref{sec:diffquillen} we elaborate on how the spaces $(\diff_n F)(X)$
define a spectrum. As it is defined in terms of the cross-effect we only work in the based setting, so it is the derivative over the point.

\begin{remark}\label{rmk:hostuff}
We caution the reader that there is a difference between what Goodwillie \cite{goodcalc3} calls the $n^{th}$ cross-effect and the above notation. As is now standard, Goodwillie's version
is called the \emph{homotopy} cross-effect.
\end{remark}


To make the cross effect into a right Quillen functor we need to have more cofibrations than in the projective model structure on $\wcal \Top$.
The extra maps we need are defined below in Definition \ref{def:maps}. We first need the following formalism for cubical diagrams.

\begin{definition}
Let $\underline{n}$ denote the set $\{ 1, \dots, n \}$ and let
$\pcal(\underline{n})$ denote the powerset of $\underline{n}$.
We define $\pcal_0(\underline{n})$ as the set of
non--empty subsets of $\underline{n}$.
\end{definition}

\begin{definition}\label{def:maps}
Consider the following collection of maps, where $\phi_{\underline{X},n}$ is defined via the projections which send those factors in $S$ to the basepoint.
\[
\begin{array}{c}
\Phi_n = \{  \phi_{\underline{X},n}
\co
\underset{S \in \pcal_0(\underline{n})}{\colim}
\wcal({\bigvee_{l \in \underline{n} - S} X_l}, -)
\longrightarrow
\wcal({\bigvee_{l =1}^n X_l},-)
\ | \ \underline{X} = (X_1, \dots , X_n), X_l \in \skel \wcal  \} \\
\end{array}
\]
We then also define $\Phi_\infty = \cup_{n \geqslant 1} \Phi_n$.
\end{definition}
The cofibre of $\nat(-,F) (\phi_{\underline{X},n})$ is the cross effect
of $F$ at $\underline{X}$, $\cref_n(F)(X_1, \ldots, X_n)$;
see \cite[Lemma 3.14]{BRgoodwillie}.

\begin{definition}
Given $f \co A \to B$ a map of based of spaces and $g \co X \to Y$ in $\wcal$,
the \textbf{pushout product} of $f$ and $g$, $f \Box g$, is given by
\[
f \Box g \co
B \smsh X \bigvee_{A \smsh X} A \smsh Y \to B \smsh Y.
\]
\end{definition}

\begin{theorem}\label{thm:crefmodel}
There is a cofibrantly generated model structure on $\wcal \Top$, \textbf{the cross effect model structure},
whose weak equivalences are the objectwise weak homotopy equivalences
and whose generating sets are given by
\[
I_{\wcal cr}  = \Phi_\infty \Box I_{\Top} \quad J_{\wcal cr} = \Phi_\infty \Box J_{\Top}
\]
We call the cofibrations of this model structure \textbf{cross effect cofibrations}
and call the fibrations the \textbf{cross effect fibrations}.
We write $\wcal \Top_{\cross}$ for this model category and $\fibrep_{\cross}$ for its fibrant
replacement functor.
\end{theorem}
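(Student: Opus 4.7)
The plan is to invoke a standard recognition theorem for cofibrantly generated model categories (Hirschhorn \cite[Theorem 11.3.1]{hir03}). Since $\wcal\Top$ is already bicomplete (Section \ref{subsec:catwtop}) and the objectwise weak homotopy equivalences plainly satisfy 2-of-3 and closure under retracts (both inherited pointwise from the $q$-model structure on $\Top$), the outstanding hypotheses to verify are: (i) the domains of $I_{\wcal cr}$ and $J_{\wcal cr}$ are small relative to the respective cell complexes; (ii) every $J_{\wcal cr}$-cell complex is an objectwise weak equivalence and lies in $I_{\wcal cr}$-cof; and (iii) the $I_{\wcal cr}$-injectives coincide with $(J_{\wcal cr}\text{-inj})\cap W$. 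I will reduce all three to a single technical input: every $\phi_{\underline{X},n} \in \Phi_\infty$ is a cofibration in the projective model structure of Lemma \ref{lem:projmod}. Establishing this is the main obstacle.

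Granting this input, the three hypotheses follow almost mechanically. Smallness reduces via the enriched Yoneda lemma to the smallness of the domains in $I_\Top$ and $J_\Top$, exactly as in the projective case. Because the projective model structure is topological, the pushout product of a projective cofibration with a cofibration (respectively acyclic cofibration) in $\Top$ is again a projective cofibration (respectively projective acyclic cofibration). Thus every element of $J_{\wcal cr} = \Phi_\infty \Box J_\Top$ is a projective acyclic cofibration, giving the weak-equivalence half of (ii); the inclusion $J_{\wcal cr} \subseteq I_{\wcal cr}$-cof follows from $J_\Top \subseteq I_\Top$-cof and functoriality of $\Box$. For (iii), note that when $n=1$ the indexing set $\pcal_0(\{1\})$ has one element with empty complement, so the domain of $\phi_{(X_1),1}$ is $\wcal(\ast,-) = \ast$, and $\Phi_1 \Box I_\Top$ recovers $I_{\wcal\Top}$. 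Consequently every $I_{\wcal cr}$-injective is a projective trivial fibration, hence in $W$; the other containment follows from $J_{\wcal cr}$-cof $\subseteq I_{\wcal cr}$-cof established in (ii).

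The heart of the proof is therefore showing that $\phi_{\underline{X},n}$ is a projective cofibration of $\wcal\Top$ for every $\underline{X}$ and every $n \geq 1$. My approach is an inductive punctured-cube analysis. The domain of $\phi_{\underline{X},n}$ is the colimit, indexed on the punctured cube $\pcal_0(\underline{n})$, of the representables $\wcal(\bigvee_{l \notin S} X_l, -)$, with transition maps induced by the wedge projections to the basepoint. Each representable is projective cofibrant, and the diagram may be filtered by $|S|$. Inductively, at each stage the attaching map is identified with a latching map into a colimit of representables; the inductive step reduces, via the natural isomorphism $\wcal(Y_1 \vee Y_2, -) \cong \wcal(Y_1, -) \times \wcal(Y_2, -)$, to the standard fact that the fat-wedge-to-product inclusion of pointed CW complexes is a cofibration, which lifts objectwise to a projective cofibration in $\wcal\Top$. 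Iterating through the cube, $\phi_{\underline{X},n}$ itself is identified with the terminal fat-wedge-to-product inclusion and is thus a projective cofibration. The bookkeeping of identifying the latching objects at each stage is where I expect the argument to be delicate; this is the topological analogue of the punctured-cube analysis carried out in the simplicial setting by Biedermann--R\"ondigs in the portions of \cite{BRgoodwillie} preceding their analogue of this theorem. Once the cube argument is in hand, the formal reductions in the previous paragraph complete the verification of Hirschhorn's hypotheses and produce the asserted model structure.
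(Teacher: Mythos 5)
Your reduction of everything to the claim that each $\phi_{\underline{X},n}$ is a cofibration in the \emph{projective} model structure on $\wcal \Top$ is where the argument fails: that claim is false, and it cannot be repaired, because it would defeat the purpose of the construction. You correctly identify $\phi_{\underline{X},n}$, via $\wcal(\bigvee_l X_l,-)\cong \prod_l \wcal(X_l,-)$, with the fat-wedge-to-product inclusion; its cofibre is $\bigsmashprod{l=1}{n}\wcal(X_l,-)$. Already for $n=2$ the domain $\wcal(X_1,-)\vee\wcal(X_2,-)$ is projectively cofibrant, so if $\phi_{\underline{X},2}$ were a projective cofibration then $\wcal(X_1,-)\smsh\wcal(X_2,-)$ would be projectively cofibrant, and hence $\cref_n$ and $\diff_n$ (which are corepresented by these smash products) would already be right Quillen for the projective structure. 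The opening of Section \ref{subsec:crefmod} states explicitly that this fails, following the introduction to \cite[Section 3.3]{BRgoodwillie}; the entire point of the cross effect model structure is to adjoin the maps of $\Phi_\infty$ as \emph{new} cofibrations. Put differently: since $\Phi_1\Box I_{\Top}=I_{\wcal\Top}$, your claim would force the cross effect and projective model structures to have the same cofibrations and hence to coincide. The error in your cube argument is the step ``the fat-wedge-to-product inclusion of pointed CW complexes is a cofibration, which lifts objectwise to a projective cofibration in $\wcal\Top$'': an objectwise cofibration of diagrams is in general very far from being a projective cofibration.

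What is true, and what the paper's proof actually uses, is precisely that weaker objectwise statement: each $\phi_{\underline{X},n}$ is an objectwise $m$-cofibration (hence objectwise $h$-cofibration) of spaces. The verification of the recognition-theorem hypotheses is then carried out objectwise, using the monoidal $m$- and $h$-model structures on $\Top$ (so that $\phi\Box j$ for $j\in J_{\Top}$ is an objectwise weak equivalence by a gluing argument, and objectwise acyclic $q$-fibrations have the right lifting property against $\Phi_\infty\Box I_{\Top}$), with smallness of the domains taken relative to objectwise $h$-cofibrations via \cite[Proposition 2.4.2]{hov99} and \cite[Proposition 10.4.8]{hir03}; this is the content of \cite[Section 3.3]{BRgoodwillie}, to which the proof defers. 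A secondary gap: your step (iii) only re-proves $I_{\wcal cr}\text{-inj}\subseteq (J_{\wcal cr}\text{-inj})\cap W$ twice; the containment $(J_{\wcal cr}\text{-inj})\cap W\subseteq I_{\wcal cr}\text{-inj}$ is the substantive one and again requires the objectwise cofibration property of $\Phi_\infty$, not a formal consequence of $J_{\wcal cr}\text{-cof}\subseteq I_{\wcal cr}\text{-cof}$.
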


\begin{proof}
Similar to the arguments of \cite[Section 3.3]{BRgoodwillie}.
Note the following two facts:
\begin{enumerate}
\item $\phi_{\underline{X},n}$ is an
objectwise $m$-cofibration (and hence a $h$-cofibration)
of based spaces,
\item the domains of the generating sets are small with respect to the objectwise $h$-cofibrations by Hovey \cite[Proposition 2.4.2]{hov99}
and Hirschhorn \cite[Proposition 10.4.8]{hir03}.
\end{enumerate}
\end{proof}

\begin{corollary}\label{cor:crossproperty}
The cross effect model structure on $\wcal \Top$ is proper
and the cofibrant objects
are small with respect to the class of objectwise $h$--cofibrations.
\end{corollary}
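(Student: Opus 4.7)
The plan is to reduce both claims to objectwise statements about $\Top$, using that weak equivalences, pushouts, pullbacks, and (in a controlled sense) cofibrations and fibrations of $\wcal \Top_{\cross}$ are all detected or computed objectwise. Before tackling properness or smallness, I would first verify that every cross effect cofibration is an objectwise $h$-cofibration of based spaces. The generating cofibrations are pushout products $\phi_{\underline{X},n} \Box i$, where $\phi_{\underline{X},n}$ is an objectwise $m$-cofibration (as noted in the proof of Theorem \ref{thm:crefmodel}) and $i \in I_{\Top}$ is a $q$-cofibration, hence also an $m$-cofibration. The pushout product of $m$-cofibrations is an $m$-cofibration, and every $m$-cofibration is an $h$-cofibration; since the class of objectwise $h$-cofibrations is closed under pushout, transfinite composition, and retracts, every cross effect cofibration is an objectwise $h$-cofibration.

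With this in hand, left properness is immediate: pushouts of weak homotopy equivalences of based spaces along $h$-cofibrations are weak homotopy equivalences (the gluing lemma for $h$-cofibrations, which holds since $h$-cofibrations are in particular closed inclusions), and both pushouts and weak equivalences in $\wcal \Top_{\cross}$ are computed objectwise.

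Right properness is the step that requires a touch more care. The key observation is that for $n=1$, the map $\phi_{X,1}$ specialises to $\ast \to \wcal(X,-)$, because the colimit indexed by $\pcal_0(\underline{1})$ is taken over a single object and the relevant wedge is empty. Hence the pushout product $\phi_{X,1} \Box j$ with $j \in J_{\Top}$ is just $\wcal(X,-) \smsh j$, showing that $J_{\wcal cr}$ contains the generating acyclic cofibrations $J_{\wcal \Top}$ of the projective model structure of Lemma \ref{lem:projmod}. Consequently every cross effect fibration is, in particular, a projective fibration, i.e.\ an objectwise Serre fibration. Right properness of $\wcal \Top_{\cross}$ then follows from right properness of $\Top_q$, since pullbacks are again objectwise.

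For the smallness claim, the proof of Theorem \ref{thm:crefmodel} already notes that the domains of $I_{\wcal cr}$ are small with respect to objectwise $h$-cofibrations, via Hovey \cite[Proposition 2.4.2]{hov99} and Hirschhorn \cite[Proposition 10.4.8]{hir03}. Any cofibrant object $X$ is a retract of a relative $I_{\wcal cr}$-cell complex $\emptyset \to X'$, whose cells are themselves objectwise $h$-cofibrations by the first step. Smallness is preserved under such transfinite compositions and under retracts by a standard cardinality argument, so $X$ is small with respect to the class of objectwise $h$-cofibrations. The primary obstacle I expect is the right properness step, and it is unlocked by the $n=1$ case of $\Phi_\infty$ collapsing the generating acyclic cofibrations of the projective structure into those of the cross effect structure.
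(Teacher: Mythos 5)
Your proof is correct and follows essentially the same route as the paper's: both reduce properness to the facts that cross effect cofibrations are objectwise $h$-cofibrations and cross effect fibrations are objectwise $q$-fibrations (with everything else computed objectwise), and both obtain smallness from the smallness of the generating domains via the standard cell-complex and retract argument from Hirschhorn \cite[Section 10.4]{hir03}. Your $n=1$ observation that $\phi_{X,1}\Box j = \wcal(X,-)\smsh j$, so that $J_{\wcal \Top}\subseteq J_{\wcal cr}$, is a nice explicit justification of the fibration claim that the paper leaves implicit.
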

\begin{proof}
Every cross effect cofibration is an objectwise $h$--cofibration.
Similarly every cross effect fibration is an objectwise $q$-fibration.
Since the weak equivalences, limits and colimits are all defined objectwise,
the result follows from standard properties of $\Top$.
The smallness follows from \cite[Section 10.4]{hir03} and the second
point of the proof of Theorem \ref{thm:crefmodel}.
\end{proof}

\begin{corollary}\label{cor:wncofibrations}
For $k \co A \to B$ a cofibration of based spaces and $(X_1, \dots, X_n)$ an $n$-tuple of
objects of $\wcal$,  the map
\[
\bigsmashprod{l=1}{n} \wcal(X_l,-) \smashprod k \co \bigsmashprod{l=1}{n} \wcal(X_l,-) \smashprod A \to \bigsmashprod{l=1}{n} \wcal(X_l,-) \smashprod B
\]
is a cross effect cofibration.
\end{corollary}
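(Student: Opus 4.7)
The plan is to first show that $\ast \to \bigsmashprod{l=1}{n} \wcal(X_l,-)$ is a cross effect cofibration, and then deduce the claim by invoking the pushout product axiom that comes with the topological structure on $\wcal \Top_{\cross}$.

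For the first step, I would begin by noting that $\phi_{\underline{X},n}$ itself belongs to the class of cross effect cofibrations. Indeed, the generating set $I_{\wcal cr} = \Phi_\infty \Box I_\Top$ contains $\phi_{\underline{X},n} \Box (S^{-1}_+ \to D^0_+)$, and since $S^{-1}_+ \to D^0_+$ is the map $\ast \to S^0$, an inspection shows that the pushout product of any map with $\ast \to S^0$ is canonically isomorphic to the original map. Next I would identify $\bigsmashprod{l=1}{n} \wcal(X_l,-)$ with the cofibre of $\phi_{\underline{X},n}$. Evaluated at $Y \in \wcal$, the representable $\wcal(\bigvee_{l=1}^n X_l, Y)$ is isomorphic to the product $\prod_l \wcal(X_l, Y)$, within which the fat wedge is exactly $\colim_{S \in \pcal_0(\underline{n})} \wcal(\bigvee_{l \notin S} X_l, Y)$. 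The quotient of the product by the fat wedge is the iterated smash product $\bigsmashprod{l=1}{n} \wcal(X_l, Y)$. Hence the pushout square
\[
\xymatrix{
\colim_{S \in \pcal_0(\underline{n})} \wcal(\bigvee_{l \notin S} X_l,-) \ar[r]^-{\phi_{\underline{X},n}} \ar[d] & \wcal(\bigvee_{l=1}^n X_l,-) \ar[d] \\
\ast \ar[r] & \bigsmashprod{l=1}{n} \wcal(X_l,-)
}
\]
presents $\ast \to \bigsmashprod{l=1}{n} \wcal(X_l,-)$ as a cobase change of the cross effect cofibration $\phi_{\underline{X},n}$, hence as a cross effect cofibration itself.

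For the final step, I would use that $\wcal \Top_{\cross}$ is topological by Theorem \ref{thm:crefmodel}, which via the adjoint form of Definition \ref{def:topological} means that the pushout product of a cross effect cofibration with a cofibration of based spaces is again a cross effect cofibration. Applying this to the cofibration $\ast \to \bigsmashprod{l=1}{n} \wcal(X_l,-)$ and $k \co A \to B$, and noting that $\ast \smashprod A = \ast = \ast \smashprod B$ forces the pushout product to simplify to $\bigsmashprod{l=1}{n} \wcal(X_l,-) \smashprod k$, delivers the required conclusion. The only step that requires genuine computation is the cofibre identification, but this is just the standard decomposition of the fat wedge inside a finite product of based spaces; the rest is formal manipulation using the properties of the cross effect model structure established in Theorem \ref{thm:crefmodel}.
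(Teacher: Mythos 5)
Your proposal is correct and follows essentially the same route as the paper, whose proof consists precisely of the two assertions that $\alpha \co \ast \to \bigsmashprod{l=1}{n} \wcal(X_l,-)$ is a cross effect cofibration and that therefore $\alpha \Box k$ (which is the map in question) is one too. You have simply supplied the details the paper leaves implicit: that $\phi_{\underline{X},n} \cong \phi_{\underline{X},n} \Box (\ast \to S^0)$ lies in the generating set, that $\alpha$ is its cobase change via the fat-wedge/smash-product identification, and that the pushout product with $k$ preserves cofibrations.
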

\begin{proof}
The map $\alpha \co \ast \to \bigsmashprod{l=1}{n} \wcal(X_l,-)$
is a cross effect cofibration, where $\ast$ denotes here the one point space.
It follows that $\alpha \Box k$ is a cross effect cofibration.
\end{proof}

The proof of the following is effectively Biedermann and R\"ondigs \cite[Lemma 3.24]{BRgoodwillie}.

\begin{lemma}\label{lem:crossfibrant}
If $F$ is a cross effect fibrant object of $\wcal \Top$ then
the $n^{th}$ homotopy cross effect of $F$ is given by the
strict $n^{th}$ cross effect.
\end{lemma}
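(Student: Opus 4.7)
The plan is to exploit the fact that the fat-wedge-projection $\phi_{\underline{X}, n}$ is a generating cross effect cofibration, an arrangement engineered precisely so that the strict cross effect construction becomes homotopy-invariant on cross effect fibrant functors.

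First, I would realise $\cref_n(F)(X_1, \ldots, X_n) = \nat(\bigwedge_l \wcal(X_l,-), F)$ as the strict fibre of the map
\[
\nat(\phi_{\underline{X},n}, F) \co F(\bigvee_l X_l) \longrightarrow \lim_{S \in \pcal_0(\underline{n})} F(\bigvee_{l \notin S} X_l),
\]
using the identification---visible from the description of coproducts in $\wcal\Top$ together with Yoneda---of the cofibre of $\phi_{\underline{X},n}$ in $\wcal\Top$ with $\bigwedge_l \wcal(X_l,-)$. For $n=2$, for example, one has $\wcal(X_1\smcoprod X_2,-) \cong \wcal(X_1,-)\x\wcal(X_2,-)$ objectwise, and quotienting by the wedge gives the smash product. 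The homotopy cross effect is similarly the homotopy fibre of the same map, as soon as one knows the target computes a genuine homotopy limit of the punctured cube.

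Next, I would apply the topological model structure axiom (Definition \ref{def:topological}) to $\phi_{\underline{X}, n} \in \Phi_\infty \subseteq I_{\wcal cr}$, a cross effect cofibration, and to the cross effect fibration $F \to \ast$. This immediately yields that $\nat(\phi_{\underline{X}, n}, F)$ is a Serre fibration of spaces, so its strict fibre agrees with its homotopy fibre.

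The remaining---and main---obstacle is to verify that $\lim_{S \in \pcal_0(\underline{n})} F(\bigvee_{l \notin S} X_l)$ is indeed the homotopy limit of the punctured cube. For $n = 2$ this is clear, as the punctured cube degenerates to the product $F(X_1)\x F(X_2)$ of fibrant spaces. For larger $n$, I would proceed by induction, filtering $\phi_{\underline{X}, n}$ by a chain of pushouts of cross effect cofibrations of the form $\phi_{\underline{X}', k}$ with $k < n$ (since the colimit source of $\phi_{\underline{X}, n}$ is itself glued together from the same kind of maps of smaller arity). At each filtration stage, Definition \ref{def:topological} once again produces the required Serre fibrations, and iterating identifies the strict limit with the corresponding homotopy limit, completing the equivalence $\cref_n(F) \simeq \hocref_n(F)$.
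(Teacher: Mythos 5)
Your proposal is correct and is essentially the argument the paper intends: its ``proof'' is just a deferral to \cite[Lemma 3.24]{BRgoodwillie}, whose content is exactly what you describe. Cross effect fibrancy gives, by adjunction against $\Phi_\infty \Box J_{\Top}$, that every map $\nat(\phi_{\underline{X}',k},F)$ for every sub-tuple is a Serre fibration, which simultaneously makes the punctured cube (Reedy) fibrant --- so its strict limit is the homotopy limit, your inductive step --- and makes the strict fibre of $\nat(\phi_{\underline{X},n},F)$ agree with its homotopy fibre.
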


%
\subsection{The \texorpdfstring{$n$}{n}-excisive model structure}\label{subsec:nexcmod}
%

As in Barnes and Oman \cite[Section 6]{barnesoman13},
we perform a left Bousfield localisation of the
cross effect model structure on $\wcal \Top$ to obtain the
$n$-excisive model structure.
The class of fibrant objects
of this model structure will be the class of $n$-excisive
objects of $\wcal \Top$; see Definition \ref{def:nexc} and Theorem \ref{thm:nexc-model}.
The cofibrations will remain unchanged
and the weak equivalences will be the \textbf{$P_n$-equivalences},
those maps $f \co F \to G$ such that $P_n f$ (see Definition \ref{defn:Tn}) is an
objectwise weak homotopy equivalence.

\begin{definition}
An \textbf{$n$--cube} in $\wcal$ (or $\Top$) is a functor
$\mathcal{X}$ from $\pcal(\underline{n})$ to $\wcal$ (resp. $\Top$).
An $n$--cube is said to be \textbf{strongly cocartesian} if
all of its two-dimensional faces are homotopy pushout
squares.
An $n$--cube is said to be \textbf{cartesian} if
the map
\[
\mathcal{X}(\emptyset) \longrightarrow \holim_{S \in \pcal_0(\underline{n})} \mathcal{X}(S)
\]
induced by the maps $\mathcal{X}(\emptyset) \to \mathcal{X}(S)$ is a
weak homotopy equivalence.
\end{definition}

\begin{definition}\label{def:nexc}
An object $F \in \wcal \Top$ is said to be \textbf{$n$-excisive}
if it sends strongly cocartesian $(n+1)$-cubes in $\wcal$ to cartesian $(n+1)$-cubes in $\Top$.
\end{definition}

We now give the construction of the homotopy-universal approximation to $F$ by an $n$-excisive functor, denoted $P_n F$.
Note that we use $X \join Y$ to denote the topological join of $X$ and $Y$.

\begin{definition}\label{defn:Tn}
We first define a functor $T_n \co \wcal \Top \to \wcal \Top$
and a natural transformation $t_{n} \co \id \to T_n$.
Let $F \in \wcal \Top$ then $T_n F$ is given below.
\[
(T_n F)(X)  =
\Nat( \underset{S \in \pcal_0(\underline{n+1})}{\hocolim} \wcal(S \join X,-), F)
=
\underset{S \in \pcal_0(\underline{n+1})}{\holim}
F(S \join X)
\]
The inclusion of the empty set as the initial object of $\pcal_0(\underline{n+1})$ and that $\emptyset \join X \cong X$ gives a natural transformation $t_{n,F}$ from $F(-) \cong F(\emptyset\join -) $ to the homotopy limit $T_nF$.

Furthermore, we define
\[
P_n F  := \hocolim
\big(
F
\overset{t_{n,F}}{\longrightarrow}
T_n F
\overset{t_{n,T_n F}}{\longrightarrow}
T_n^2 F
\overset{t_{n,T_n^2 F}}{\longrightarrow}
T_n^3 F
\longrightarrow \dots
\big)
\]
\end{definition}


For more details on homotopy limits in functor categories see Heller \cite{heller82}.
In particular $T_n F$ and $P_n F$ are continuous functors from $\wcal$
to based topological spaces. One could also apply model category techniques and take a strict limit of a suitably fibrant replacement of the diagram.

The proof of Goodwillie \cite[Theorem 1.8]{goodcalc3} implies the following
result.

\begin{lemma}
An object $F \in \wcal \Top$ is $n$-excisive if and only if the map $t_{n,F} \co F \to T_n F$
is an objectwise weak homotopy equivalence.
\end{lemma}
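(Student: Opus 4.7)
The plan is to unpack the construction of $T_n F$ and recognise $t_{n,F}(X)$ as the cartesianness comparison map of one particular strongly cocartesian $(n+1)$-cube, the \emph{join cube}. The forward direction will then be immediate, while the reverse direction reduces to showing that this single cube suffices to detect $n$-excisiveness.

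First I would note that by Definition \ref{defn:Tn},
\[
(T_n F)(X) = \holim_{S \in \pcal_0(\underline{n+1})} F(S \join X)
\]
is precisely the homotopy limit occurring in the definition of a cartesian $(n+1)$-cube applied to the cube $\mathcal{X}_X \co \pcal(\underline{n+1}) \to \wcal$ given by $\mathcal{X}_X(S) = S \join X$ (using $\emptyset \join X \cong X$). Under this identification, $t_{n,F}(X)$ is exactly the canonical comparison
\[
F(\mathcal{X}_X(\emptyset)) \longrightarrow \holim_{S \in \pcal_0(\underline{n+1})} F(\mathcal{X}_X(S)),
\]
so $t_{n,F}(X)$ is a weak equivalence if and only if the cube $F \circ \mathcal{X}_X$ is cartesian.

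For the forward direction I would verify that $\mathcal{X}_X$ is strongly cocartesian, so that $n$-excisiveness of $F$ applies. Each two-dimensional face has the shape
\[
\xymatrix{
R \join X \ar[r] \ar[d] & (R \cup \{i\}) \join X \ar[d] \\
(R \cup \{j\}) \join X \ar[r] & (R \cup \{i,j\}) \join X
}
\]
for some $R \subseteq \underline{n+1}$ and distinct $i, j \notin R$. Since $(-) \join X$ preserves pushouts of finite discrete sets along cofibrations and $(R \cup \{i\}) \cup_R (R \cup \{j\}) = R \cup \{i,j\}$, each such face is a homotopy pushout. Applying the $n$-excisive hypothesis then yields that $F \circ \mathcal{X}_X$ is cartesian, so $t_{n,F}(X)$ is a weak equivalence for every $X$.

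For the reverse direction, suppose $t_{n,F}$ is a pointwise weak equivalence, and let $\mathcal{Y}$ be any strongly cocartesian $(n+1)$-cube with initial vertex $X := \mathcal{Y}(\emptyset)$; the goal is to show that $F(\mathcal{Y})$ is cartesian. I would compare $\mathcal{Y}$ with the join cube $\mathcal{X}_X$, exploiting that $\mathcal{X}_X$ is, in a homotopical sense, the universal strongly cocartesian cube with initial vertex $X$ whose initial edges factor through the cone $CX = \{\ast\} \join X$. The main obstacle is precisely the passage from the single test case of the join cube to every strongly cocartesian cube; this reduction rests on the fact that a strongly cocartesian cube is determined up to equivalence by its initial vertex and the $n+1$ initial edges, together with naturality of $t_n$ applied to the universal comparison. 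A detailed execution of this step is given by Goodwillie's proof of \cite[Theorem 1.8]{goodcalc3}, which the above observations then translate directly into our topological enriched setting.
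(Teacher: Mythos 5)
Your proposal is correct and takes essentially the same route as the paper, which simply observes that the proof of Goodwillie's Theorem~1.8 in \cite{goodcalc3} yields the statement: you make explicit the forward direction (identifying $t_{n,F}(X)$ with the cartesianness comparison map of the strongly cocartesian join cube $S \mapsto S \join X$) and defer the reverse direction to that same proof. One small remark: the cleanest way to finish the reverse direction is to note that if $t_{n,F}$ is an objectwise equivalence then $F \simeq P_n F$, which is $n$-excisive by Goodwillie's Theorem~1.8 itself, rather than attempting a direct comparison of an arbitrary strongly cocartesian cube with the join cube, since a single application of $t_n$ only shows (via Goodwillie's Lemma~1.9) that the comparison map factors through $T_n F$ and the full argument requires the iteration built into $P_n$.
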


In particular, $t_{n,F} \co F \to T_n F$ is a $P_n$-equivalence for any $F$.
To make a new model structure where
the $P_n$--equivalences are weak equivalences,
it is necessary and sufficient to turn the class of maps
$t_{n,F}$ into weak equivalences.
Consider the following \emph{set} of maps
\[
S_{n}  =  \{
s_{n,X} \co \underset{S \in \pcal_0(\underline{n+1})}{\hocolim}
\wcal({S \join X}, -)
\longrightarrow
\wcal( X,-) \ | \ X \in \skel{\wcal}
\}.
\]
By the Yoneda lemma, $\nat(-,F)(s_{n,X}) \simeq t_{n,F}(X)$.
Hence, a model structure on $\wcal \Top$ will have $S_n$
contained in the
weak equivalences if and only if  the $P_n$-equivalences
are weak equivalences. We proceed to alter $\wcal \Top$
so that the maps in $S_n$ are weak equivalences.

We replace the set of maps $S_n$ by a set of
objectwise $h$-cofibrations, $K_n$.
For $s_{n,X} \in S_n$ let $k_{n,X}$ be the map from the domain of $s_{n,X}$ into the mapping cylinder $Ms_{n,X}$. Similarly, let
$r_{n,X}\colon Ms_{n,X} \to \wcal( X,-)$
be the retraction.
Define
\[
K_n  =  \{
k_{n,X} \co \underset{S \in \pcal_0(\underline{n})}{\hocolim}
\wcal({S \join X}, -)
\longrightarrow
Ms_{n,X} \ | \ X \in \skel \wcal
\}.
\]

\begin{theorem}\label{thm:nexc-model}
There is a cofibrantly generated model structure on $\wcal \Top$
whose weak equivalences are the $P_n$-equivalences
and whose generating sets are given by
\[
I_{\nexs}  = \Phi_\infty \Box I_{\Top} \quad J_{\nexs} =
\left( \Phi_\infty \Box J_{\Top} \right)
\cup \left( K_n \Box I_{\Top} \right)
\]
The cofibrations are the \textbf{cross effect cofibrations}
and the fibrations are called \textbf{$n$--excisive fibrations}.
In particular, every $n$--excisive fibration is a cross effect fibration.
The fibrant objects are the cross effect fibrant
$n$--excisive functors.
We write $\wcal \Top_{\nexs}$ for this model category,
which we call the \textbf{$n$--excisive model structure}.
\end{theorem}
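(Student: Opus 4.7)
The plan is to obtain this model structure as a left Bousfield localisation of the cross effect model structure of Theorem \ref{thm:crefmodel} at the set of maps $S_n$, or equivalently at the set of objectwise $h$-cofibrations $K_n$ that replaces them. Since the cross effect model structure is cofibrantly generated, proper, and topological (with cofibrant objects small with respect to objectwise $h$-cofibrations, by Corollary \ref{cor:crossproperty}), the machinery needed to run a localisation argument is all in place. Because the cofibrations are to remain unchanged, the generating cofibrations stay as $\Phi_\infty \Box I_{\Top}$, and the new ingredient is just the additional generating acyclic cofibrations $K_n \Box I_{\Top}$.

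First I would verify that the candidate generating sets $I_{\nexs}$ and $J_{\nexs}$ satisfy the small object argument: the domains of maps in $\Phi_\infty \Box I_{\Top}$ are small with respect to $h$-cofibrations (as in Theorem \ref{thm:crefmodel}), and the domains of maps in $K_n \Box I_{\Top}$ are built from representables and standard spaces, so the same smallness holds. The pushout product with an $I_{\Top}$-cell preserves objectwise $h$-cofibrations, so every element of $J_{\nexs}$ is an objectwise $h$-cofibration. Next I would identify the class of weak equivalences: the Yoneda calculation $\nat(-,F)(s_{n,X}) \simeq t_{n,F}(X)$ identifies $S_n$-local objects with $n$-excisive functors, and the mapping cylinder factorisation $K_n$ has the same local objects as $S_n$. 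Iterating $T_n$ to build $P_n$ then shows that a map $f$ is a weak equivalence in the new structure if and only if $P_n f$ is an objectwise weak equivalence, i.e.\ $f$ is a $P_n$-equivalence.

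The heart of the argument, and the step I expect to be the main obstacle, is checking the lifting axioms: I must show that the class of cross effect cofibrations which are $P_n$-equivalences coincides with the saturation of $J_{\nexs}$. One direction is easy since each map in $J_{\nexs}$ is a cross effect cofibration and a $P_n$-equivalence (for the $K_n \Box I_{\Top}$ part, this uses that $k_{n,X}$ is a $P_n$-equivalence between cofibrant objects and that pushout products with $I_{\Top}$ preserve $P_n$-equivalences of cofibrant objects, which in turn relies on the topological enrichment from Lemma \ref{lem:projmod} and standard cube-lemma arguments). For the other direction, one factors a cross effect trivial cofibration via the small object argument applied to $J_{\nexs}$, obtaining $f = p \circ j$ with $j$ a relative $J_{\nexs}$-cell and $p$ an $n$-excisive fibration. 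A two-out-of-three argument and the retract argument then show that $p$ is a $P_n$-equivalence between $n$-excisive fibrant objects; by Lemma \ref{lem:crossfibrant} and the $P_n$-equivalence characterisation of $n$-excisive functors, $p$ is an objectwise weak equivalence and hence a cross effect trivial fibration, so that $f$ is a retract of $j$.

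Finally, I would identify the fibrant objects. A cross-effect-fibrant $n$-excisive functor $F$ has the right lifting property with respect to every element of $K_n \Box I_{\Top}$ by adjunction together with the fact that the map $F \simeq T_n F$ makes $\nat(k_{n,X}, F)$ an acyclic Serre fibration of spaces. Conversely, an $n$-excisive fibrant object must in particular be cross effect fibrant (since $J_{\wcal cr} \subset J_{\nexs}$) and must be local with respect to $S_n$, hence $n$-excisive by Goodwillie's criterion. This establishes the last claim of the theorem and completes the proof.
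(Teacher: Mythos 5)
Your overall strategy -- obtain the $n$-excisive structure as a left Bousfield localisation of the cross effect model structure, run the recognition theorem with $I_{\nexs}$ and $J_{\nexs}$, use smallness from Corollary \ref{cor:crossproperty}, and identify the fibrant objects via the Yoneda computation $\nat(-,F)(s_{n,X}) \simeq t_{n,F}(X)$ -- is the same as the paper's, which itself defers most of the work to Biedermann and R\"ondigs \cite[Theorem 5.8 and Lemma 5.9]{BRgoodwillie}.

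There is, however, one step where your argument as written would fail. In the lifting-axiom verification you factor a cross effect trivial cofibration $f$ as $p \circ j$ with $j$ a relative $J_{\nexs}$-cell and $p$ a $J_{\nexs}$-injective, and then assert that $p$ is ``a $P_n$-equivalence between $n$-excisive fibrant objects.'' The domain and codomain of $p$ are the codomains of $j$ and of $f$, and neither is fibrant in general, so you cannot invoke the characterisation of $n$-excisive functors (or Lemma \ref{lem:crossfibrant}) at the level of objects. What is actually needed -- and what the paper isolates as the heart of its proof -- is a characterisation of the \emph{maps} with the right lifting property with respect to $J_{\nexs}$: such a map $p \co E \to B$ is precisely a cross effect fibration for which the square comparing $E \to B$ with $T_n E \to T_n B$ (equivalently $P_n E \to P_n B$) is a homotopy pullback objectwise. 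Granting this, if $p$ is moreover a $P_n$-equivalence then $E \simeq B \times^{h}_{P_n B} P_n E \to B$ is an objectwise weak equivalence, hence a cross effect trivial fibration, and the retract argument closes the loop exactly as you intend. So the gap is not in the architecture but in the missing (and nontrivial) homotopy-pullback characterisation of the $J_{\nexs}$-injectives; your appeal to fibrancy of the objects does not substitute for it.
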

\begin{proof}
Much of the work is similar to Biedermann and R\"ondigs
\cite[Theorem 5.8 and Lemma 5.9]{BRgoodwillie}.
The lifting properties and classification of the weak equivalences are consequences of the following statement:
a map $f$ has the right lifting property with respect to
$J_{\nexs}$ if and only if $f$ is a cross effect fibration and
either (and hence both) of the squares below is
a homotopy pullback for all $X \in \wcal$.
\[
\xymatrix{
F(X)
\ar[r] \ar[d] &
(T_n F )(X)
\ar[d] \\
G(X) \ar[r] &
(T_n G )(X)
}
\qquad \qquad
\xymatrix{
F(X)
\ar[r] \ar[d] &
(P_n F )(X)
\ar[d] \\
G(X) \ar[r] &
(P_n G )(X)
}
\]
The small object argument holds in this setting  by Hirschhorn \cite[Theorem 18.5.2]{hir03}
and Corollary \ref{cor:crossproperty}.
\end{proof}

\begin{proposition}\label{prop:proper}
The $n$--excisive model structure on $\wcal \Top$ is
proper.
\end{proposition}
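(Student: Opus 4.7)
The plan is to prove left and right properness separately, exploiting that the $n$-excisive structure is a left Bousfield localisation of the cross effect model structure (which is proper by Corollary \ref{cor:crossproperty}).

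For left properness, the cofibrations are unchanged by the localisation and the cross effect structure is already left proper. Standard results on left Bousfield localisation (for example, Hirschhorn \cite[Theorem 4.1.1]{hir03}) then guarantee that the localised model structure is also left proper. In effect, a cross effect cofibration is an objectwise $h$-cofibration, so any pushout along it is an objectwise homotopy pushout; objectwise weak equivalences are preserved by such pushouts, and this is enough to propagate $P_n$-equivalences through the construction after a standard factorisation argument.

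For right properness, the main obstacle, the plan is to use the characterisation of $n$-excisive fibrations from the proof of Theorem \ref{thm:nexc-model}: a map $p \co E \to B$ is an $n$-excisive fibration exactly when it is a cross effect fibration and the natural square
\[
\xymatrix{
E \ar[r] \ar[d]_p & P_n E \ar[d]^{P_n p} \\
B \ar[r] & P_n B
}
\]
is an objectwise homotopy pullback. Given a $P_n$-equivalence $g \co B' \to B$ and an $n$-excisive fibration $p$, form the pullback $p' \co E' := E \times_B B' \to B'$ with projection $g' \co E' \to E$. Concatenating the defining pullback square (a homotopy pullback by right properness of the cross effect structure) with the characterising square for $p$ shows that $E' \to P_n E$, $B' \to P_n B$ is an objectwise homotopy pullback. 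Factoring $B' \to P_n B$ through $P_n B'$ and using that $P_n g$ is a weak equivalence, this refines to an objectwise homotopy pullback with $P_n B'$ in the bottom right corner.

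The key technical step is to apply $P_n$ to this refined square and appeal to the fact that $P_n$ preserves finite homotopy limits, and in particular homotopy pullbacks. This holds because $T_n$ is itself a finite homotopy limit (so commutes with other homotopy limits), while $P_n$ is the sequential homotopy colimit of the iterates of $T_n$, and filtered homotopy colimits commute with finite homotopy limits in $\Top$. In the resulting $P_n$-applied square the bottom map becomes the equivalence $P_n B' \to P_n P_n B' \simeq P_n B'$ (essentially the identity), which forces the top map $P_n g' \co P_n E' \to P_n P_n E \simeq P_n E$ to be an objectwise weak equivalence, so $g'$ is a $P_n$-equivalence as required. The crux of the argument is precisely this interchange of $P_n$ with finite homotopy limits.
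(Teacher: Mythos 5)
Your proof is correct, but it takes a genuinely different route from the paper's. The paper disposes of both halves of properness in one stroke by citing Bousfield \cite[Theorem 9.3]{bous01}: the coaugmented functor $P_n$ satisfies Bousfield's axioms (as verified in \cite[Theorem 5.8]{BRgoodwillie}), and that theorem then produces a proper model structure whose weak equivalences are the $P_n$-equivalences and whose cofibrations are the cross effect cofibrations --- which is exactly the $n$-excisive model structure. Your argument unpacks what that citation hides. For left properness, the clean version of your ``standard factorisation argument'' is: factor a $P_n$-equivalence as an $n$-excisive acyclic cofibration followed by an objectwise acyclic fibration, push out each piece separately, and use left properness of the cross effect structure (Corollary \ref{cor:crossproperty}) on the second factor; this is formal once the localised model structure is known to exist. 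Your right-properness argument --- pasting the strict pullback square (a homotopy pullback since $p$ is an objectwise fibration) onto the square characterising $n$-excisive fibrations from the proof of Theorem \ref{thm:nexc-model}, then applying $P_n$ and using that $P_n$ commutes with finite homotopy limits because it is a filtered homotopy colimit of the finite homotopy limits $T_n^k$ \cite[Section 1]{goodcalc3} --- is in substance the verification of Bousfield's pullback axiom in this setting, so the two proofs agree mathematically at the crux. What your version buys is self-containedness and an explicit isolation of the one non-formal input (the interchange of $P_n$ with homotopy pullbacks); what the paper's version buys is brevity, plus the fact that Bousfield's theorem simultaneously re-derives the existence of the model structure itself.
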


\begin{proof}
The functor $P_n$ satisfies the assumptions of Bousfield
\cite[Theorem 9.3]{bous01}
(as verified in \cite[Theorem 5.8]{BRgoodwillie}).
Hence, there is a proper model structure
on $\wcal \Top$ with weak equivalences the $P_n$--equivalences
and cofibrations the cross effect cofibrations -- which is
precisely our $n$-excisive model
structure, so it is proper.
\end{proof}

Note that every $n$--excisive functor in $\wcal \Top$ is
objectwise weakly equivalent to a cross effect fibrant
$n$-excisive functor.

\begin{lemma}\label{lem:fibrepnexs}
Fibrant replacement in $\wcal \Top_{\nexs}$
is given by first applying the functor $P_n$ and
then applying $\fibrep_{\cross}$, the fibrant replacement functor of
$\wcal \Top_{\cross}$.
\end{lemma}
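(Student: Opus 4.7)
The plan is to verify the two defining properties of a fibrant replacement functor: that $\fibrep_{\cross} P_n F$ is a fibrant object of $\wcal \Top_{\nexs}$, and that the natural comparison map $F \to \fibrep_{\cross} P_n F$ is a $P_n$-equivalence. By Theorem \ref{thm:nexc-model} the fibrant objects of $\wcal \Top_{\nexs}$ are precisely the cross effect fibrant $n$-excisive functors, so each half of the argument splits into two checks.

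For fibrancy, that $\fibrep_{\cross} P_n F$ is cross effect fibrant is immediate from the definition of $\fibrep_{\cross}$. The $n$-excisive property of $P_n F$ is the fundamental output of Goodwillie's construction; I would simply cite \cite[Theorem 1.8]{goodcalc3}, which is recalled in the discussion of Definition \ref{defn:Tn}. Then the map $P_n F \to \fibrep_{\cross} P_n F$ is a cross effect weak equivalence, hence an objectwise weak equivalence of functors, and I would argue that the class of $n$-excisive functors is closed under objectwise weak equivalence via the naturality square of $t_n$: an objectwise equivalence $F \simeq G$ induces an objectwise equivalence $T_n F \simeq T_n G$, since $T_n$ is assembled from homotopy limits over finite diagrams of spaces (as discussed after Definition \ref{defn:Tn}). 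Hence $\fibrep_{\cross} P_n F$ remains $n$-excisive.

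For the comparison map, I would factor $F \to \fibrep_{\cross} P_n F$ as the composite of $F \to P_n F$ with $P_n F \to \fibrep_{\cross} P_n F$. The first arrow is a $P_n$-equivalence because applying $P_n$ yields an objectwise weak equivalence: this is a consequence of $P_n F$ being $n$-excisive, giving $P_n \circ P_n \simeq P_n$. The second arrow is an objectwise weak equivalence, and every objectwise weak equivalence is a $P_n$-equivalence since $P_n$ preserves objectwise weak equivalences (again using that it is built from homotopy (co)limits over finite diagrams). Composing yields the desired $P_n$-equivalence $F \to \fibrep_{\cross} P_n F$.

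The main obstacle I anticipate is nothing more than carefully invoking the correct preservation properties of $T_n$ and $P_n$: one must check that $T_n$, defined as an enriched homotopy limit in the functor category, actually preserves objectwise weak equivalences of its input. This reduces to the corresponding fact for finite homotopy limits in $\Top$, which is standard and underlies the Heller reference cited in the text. Once that is in hand, both halves of the argument reduce to bookkeeping with naturality squares.
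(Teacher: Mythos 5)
Your proposal is correct and follows essentially the same route as the paper: the paper's (much terser) proof simply notes that $P_n F$ is $n$-excisive, that $\fibrep_{\cross}$ produces an objectwise weakly equivalent, cross effect fibrant object which is therefore still $n$-excisive, and hence fibrant in $\wcal \Top_{\nexs}$. Your additional checks --- that $n$-excisiveness is preserved under objectwise weak equivalence via the naturality square for $t_n$, and that the composite $F \to P_n F \to \fibrep_{\cross} P_n F$ is a $P_n$-equivalence --- are exactly the details the paper leaves implicit.
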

\begin{proof}
For $F \in \wcal \Top$, $P_n F$ is $n$--excisive.
Applying $\fibrep_{\cross}$ we obtain an objectwise weakly equivalent
object $\fibrep_{\cross} P_n F$. This object is also $n$--excisive and is
cross effect fibrant. Hence it is fibrant in
$\wcal \Top_{\nexs}$. Thus we can set
$\fibrep_{\nexs}= \fibrep_{\cross} P_n$.
\end{proof}

%
\subsection{The \texorpdfstring{$n$}{n}-homogeneous model structure}\label{subsec:nhomog}
%

Our next class of functors to study are those which are `purely' $n$-excisive, that is, those $F$ such that $\P_n F \simeq F$ but $\P_{n-1} F \simeq *$.

\begin{definition}
An object $F \in \wcal \Top$ is said to be \textbf{$n$-homogeneous} if it is $n$-excisive and $P_{n-1} F(X)$ is weakly equivalent to a point for each $X \in \wcal$.

For $F \in \wcal \Top$, define $D_n F \in \wcal \Top$
as the homotopy fibre of $P_n F \to P_{n-1} F$.
Since $P_n$ and $P_{n-1}$ commute with finite homotopy limits
the functor $D_n F$ takes values in $n$-homogeneous functors
and hence is called the
\textbf{$n$-homogeneous approximation} to $F$.
\end{definition}

Similarly to Barnes and Oman \cite[Section 6]{barnesoman13} and
Biedermann and R\"ondigs \cite[Section 6]{BRgoodwillie}
we perform a right Bousfield localisation of $\wcal \Top_{\nexs}$
--this adds weak equivalences whilst preserving the class of fibrations.
The aim is to obtain a new model structure $\wcal \Top_{\nhomog}$ where the
weak equivalences are the $D_n$-equivalences and the cofibrant-fibrant objects are precisely the $n$--homogeneous objects which are fibrant
and cofibrant in the cross effect model structure.
Thus every $n$--homogeneous object of $\wcal \Top$ will be objectwise
weakly equivalent to a cofibrant-fibrant object of this new model structure.
This will give us a `short exact sequence' of model structures as below,
where the composite derived functor
$\wcal \Top_{\nhomog} \to \wcal \Top_{(n-1)\textrm{--nexs}}$
sends every object to the trivial object.
\begin{figure}[H]
\[
\xymatrix{
\wcal \Top_{\nhomog}
\ar@<+1ex>[r]
&
\wcal \Top_{\nexs}
\ar@<+1ex>[r]
\ar@<+1ex>[l]
&
\wcal \Top_{(n-1)\textrm{--nexs}}
\ar@<+1ex>[l]
}
\]
\caption{Sequence of localisations}
\label{fig:ses}
\end{figure}
The required $n$-homogeneous model structure %
will have weak equivalences those maps
$f \in \wcal \Top$ such that
\[
\hodiff_n f = \diff_n \fibrep_{\nexs} f = \diff_n \fibrep_{\cross} P_n f
\]
is an objectwise weak homotopy equivalence.
The name $\hodiff_n$ refers to the fact it is defined in terms of the
homotopy cross effect (see Remark \ref{rmk:hostuff} and
Lemma \ref{lem:crossfibrant}).
In Section \ref{sec:diffquillen} we shall turn the construction
$\diff_n$ into a Quillen functor (indeed, into a Quillen equivalence)
and $\hodiff_n$ will be its derived functor.

A pointed model category is called \textbf{stable}
if the suspension functor is an equivalence
on the homotopy category; this definition agrees with
Schwede and Shipley \cite[Definition 2.1.1]{ss03stabmodcat}.

\begin{theorem}
There is a model structure
on $\wcal \Top$ with fibrations
the $n$-excisive fibrations and weak equivalences
the $\hodiff_n$--equivalences.
We call this %
the \textbf{$n$--homogeneous model structure}
and denote it by $\wcal \Top_{\nhomog}$.
The model structure is
cofibrantly generated, proper and stable.
\end{theorem}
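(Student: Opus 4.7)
The plan is to construct the $n$-homogeneous model structure as a right Bousfield localization of $\wcal \Top_{\nexs}$, following the template of Barnes--Oman \cite[Section 6]{barnesoman13} and the general machinery of Hirschhorn \cite[Chapter 5]{hir03}. The hypotheses needed to run Hirschhorn's theorem are all in hand: $\wcal \Top_{\nexs}$ is right proper (Proposition \ref{prop:proper}), cofibrantly generated (Theorem \ref{thm:nexc-model}), and its cofibrant objects are small with respect to the class of objectwise $h$-cofibrations (Corollary \ref{cor:crossproperty}).

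The first substantive step is to produce a small set $K$ of cofibrant objects in $\wcal \Top_{\nexs}$ such that the $K$-colocal equivalences coincide with the $\hodiff_n$-equivalences. I would take $K$ to consist of cofibrant replacements (in the cross effect model structure) of the functors $\bigsmashprod{l=1}{n}\wcal(X,-) \smashprod C$ as $X$ ranges over $\skel \wcal$ and $C$ over the domains and codomains of $I_{\Top}$. The Yoneda lemma identifies $\nat(\bigsmashprod{l=1}{n}\wcal(X,-), F)$ with $\diff_n F(X)$, and Lemma \ref{lem:crossfibrant} ensures that this strict computation agrees with the homotopy cross effect on cross effect fibrant $F$. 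After composing with $\fibrep_{\nexs}$, this shows that a map is a $K$-colocal equivalence if and only if it is a $\hodiff_n$-equivalence. Hirschhorn's machinery then yields a cofibrantly generated model structure with the required weak equivalences; the fibrations are left unchanged by a right Bousfield localization, so they remain precisely the $n$-excisive fibrations. Right properness is inherited from $\wcal \Top_{\nexs}$, while left properness follows because cross effect cofibrations are objectwise $h$-cofibrations, along which $P_n$ and $\fibrep_{\cross}$ preserve weak equivalences.

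The main obstacle is verifying stability. A direct approach (independent of the not-yet-constructed Quillen equivalence of Section \ref{sec:diffquillen}) is to show that the derived unit $F \to \Omega\Sigma F$ is a $\hodiff_n$-equivalence for every cofibrant-fibrant $F$. The construction $\diff_n$ naturally assembles into a spectrum-like object: the assembly maps supply canonical structure maps $(\diff_n F)(X) \smashprod S^1 \to (\diff_n F)(X \smashprod S^1)$, and Goodwillie's analysis of the homotopy cross effect of $n$-excisive functors shows that, after applying $P_n$, these structure maps become stable equivalences on a homogeneous functor. Consequently $\hodiff_n$ carries $F \to \Omega \Sigma F$ to a levelwise equivalence of the resulting spectra, so the map is a $\hodiff_n$-equivalence. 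Essential surjectivity of $\Sigma$ on $\ho(\wcal \Top_{\nhomog})$ follows from the same delooping applied to $\Omega F$, so $\Sigma$ is an equivalence on the homotopy category and the model structure is stable. The cleanest version of this stability argument will in fact come for free once Section \ref{sec:diffquillen} produces the Quillen equivalence with $\devcat_{\stable}$, but the direct version just sketched already suffices at this point.
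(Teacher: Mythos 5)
Your overall architecture is the same as the paper's: right Bousfield localisation of $\wcal \Top_{\nexs}$ at the objects $\bigsmashprod{l=1}{n}\wcal(X,-)$, identification of the colocal equivalences with the $\hodiff_n$--equivalences via Yoneda and the fibrant replacement $\fibrep_{\nexs}=\fibrep_{\cross}P_n$, and stability along the lines of Biedermann--R\"ondigs. However, two of your technical claims do not go through as stated. First, Hirschhorn's existence theorem for right Bousfield localisation requires the model category to be right proper \emph{and cellular}; right properness, cofibrant generation and smallness of cofibrant objects are not the hypotheses of that theorem, and you have not verified cellularity of $\wcal \Top_{\nexs}$. The paper sidesteps this by invoking Christensen--Isaksen \cite[Theorem 2.6]{CI04} instead, whose Hypothesis 2.4 is automatic for cofibrantly generated model categories; that is the reason Corollary \ref{cor:crossproperty} records smallness with respect to objectwise $h$--cofibrations. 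Second, right Bousfield localisations are \emph{not} in general cofibrantly generated, so the sentence ``Hirschhorn's machinery then yields a cofibrantly generated model structure'' asserts something that needs proof. The paper obtains cofibrant generation (and an explicit generating set) only \emph{after} establishing stability, via Barnes--Roitzheim \cite[Theorem 5.9]{barnesroitzheimstable}.

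Relatedly, your one-line argument for left properness is too thin: the weak equivalences being inverted are $\hodiff_n$--equivalences, not objectwise equivalences, so knowing that cofibrations are objectwise $h$--cofibrations does not by itself show that pushouts preserve them. The clean route --- and the one the paper takes --- is to prove stability first and then deduce left properness from Barnes--Roitzheim \cite[Proposition 5.8]{barnesroitzheimstable}; in a stable model category homotopy pushout squares are homotopy pullback squares, which is what makes the argument work. Your stability sketch itself is in the right spirit (it is essentially the adaptation of \cite[Theorem 6.11]{BRgoodwillie} that the paper cites), but note that the logical order matters: stability should come before left properness and before cofibrant generation, not after. With the existence theorem replaced by Christensen--Isaksen and the properness/generation claims rerouted through stability, your proof matches the paper's.
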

\begin{proof}
By Christensen and Isaksen \cite[Theorem 2.6]{CI04}
the right Bousfield localisation of the model category $\wcal \Top_{\nexs}$
at the set
\[
M_n = \{ \bigsmashprod{l=1}{n} \wcal(X,-) \ | \ X \in \skel \wcal \}
\]
exists and is right proper.
We have used the fact that cofibrantly generated
model categories (such as $\wcal \Top_{\nexs}$) always satisfy
\cite[Hypothesis 2.4]{CI04}.
Note that this set is substantially smaller than that of
\cite[Definition 6.2]{BRgoodwillie}, where the $X$ terms
depend on $l$.

The model category $\wcal \Top_{\nexs}$ is topological
(see Definition \ref{def:topological}).
Hence the weak equivalences of $\wcal \Top_{\nhomog}$
are given by those maps $f \co F \to G$
which induce weak homotopy equivalences of spaces
as below
for all $X \in \wcal$.
\[
\Nat (\bigsmashprod{l=1}{n} \wcal(X,-) , \fibrep_{\nexs} F )
\overset{\simeq}{\longrightarrow}
\Nat (\bigsmashprod{l=1}{n} \wcal(X,-) , \fibrep_{\nexs} G )
\]

By Lemma \ref{lem:fibrepnexs} there is a weak homotopy equivalence of spaces
\[
\Nat (\bigsmashprod{l=1}{n} \wcal(X,-) , \fibrep_{\nexs} F )
\simeq
\Nat (\bigsmashprod{l=1}{n} \wcal(X,-) , \fibrep_{\cross} P_n F )
=
\diff_n \fibrep_{\cross} P_n F = \hodiff_n F.
\]
It follows that the weak equivalences
of $\wcal \Top_{\nhomog}$ are as claimed.
In particular, every object of this model category is weakly equivalent to an
$n$-homogeneous functor.

The proof of Biedermann and R\"ondigs \cite[Theorem 6.11]{BRgoodwillie}
adapted to our setting shows that $\wcal \Top_{\nhomog}$ is stable.
Given that it is stable, the work of Barnes and Roitzheim \cite[Proposition 5.8]{barnesroitzheimstable}
tells us that the category is left proper.
A small variation on \cite[Theorem 5.9]{barnesroitzheimstable}
yields that the generating cofibrations are given by the union of the generating
acyclic cofibrations for $\wcal \Top_{\nexs}$ along with the set
of morphisms
\[
\{
S^k_+ \smashprod \bigsmashprod{l=1}{n} \wcal(X,-) \longrightarrow
D^k_+ \smashprod \bigsmashprod{l=1}{n} \wcal(X,-) \ | \ k \geqslant 0, \ X \in \skel \wcal
\}.
\]
\end{proof}

Our next task is to show that Figure \ref{fig:ses}
does indeed behave like a `short exact sequence'
of model categories. That is, we want to show that
an object $F$ of the $n$--excisive model structure has
$P_{n-1} F \simeq \ast$ if and only if it is in the image of
the derived functor from
$\wcal \Top_{\nhomog}$ to $\wcal \Top_{\nexs}$.

\begin{lemma}\label{lem:nhomogequivDn}
A map is a $\hodiff_n$--equivalence if and only if it is a
$D_n$--equivalence.
\end{lemma}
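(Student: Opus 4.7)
The plan is to establish a natural weak equivalence $\hodiff_n F \simeq \hodiff_n D_n F$ for all $F \in \wcal\Top$, from which both directions of the lemma fall out (one straightforwardly, one by appealing to the classical classification of $n$-homogeneous functors).

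First I would verify the key vanishing property: for any $(n-1)$-excisive $F$, the object $\hodiff_n F$ is objectwise contractible. Since $\hodiff_n F = \diff_n \fibrep_{\cross} P_n F$ and $P_n F \simeq P_{n-1} F$ when $F$ is already $(n-1)$-excisive (modulo fibrant replacement and idempotency of $P_{n-1}$ on $(n-1)$-excisive objects), this reduces to the classical fact from Goodwillie \cite{goodcalc3} that the $n$-th homotopy cross effect of an $(n-1)$-excisive functor is contractible. By Lemma \ref{lem:crossfibrant}, the strict cross effect on the cross-effect-fibrant replacement computes the homotopy cross effect, so this gives $\hodiff_n (P_{n-1} F) \simeq \ast$.

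Next I would apply $\hodiff_n$ to the natural homotopy fibre sequence
\[
D_n F \longrightarrow P_n F \longrightarrow P_{n-1} F.
\]
Since $P_n$ is built from homotopy limits, $\fibrep_{\cross}$ preserves weak equivalences, and $\diff_n$ on cross-effect-fibrant objects is a strict end (hence preserves homotopy fibre sequences between fibrant objects), the construction $\hodiff_n$ preserves homotopy fibre sequences. Combined with idempotency of $P_n$ (giving $\hodiff_n F \simeq \hodiff_n P_n F$) and the vanishing from the previous step, this yields the natural weak equivalence
\[
\hodiff_n D_n F \overset{\simeq}{\longrightarrow} \hodiff_n F.
\]

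The easier `if' direction follows at once: if $D_n f$ is an objectwise weak equivalence, then so is $\hodiff_n D_n f$ (as $\hodiff_n$ preserves objectwise equivalences), and hence $\hodiff_n f$ is too by the zig-zag. The harder `only if' direction will be the main obstacle: given that $\hodiff_n f$ is an equivalence, one deduces $\hodiff_n D_n f$ is an equivalence between $\hodiff_n$ of $n$-homogeneous functors, and one needs to conclude $D_n f$ itself is an equivalence. For this I would invoke Goodwillie's classification from \cite[\S 2, Theorem 3.5]{goodcalc3}, which says that a map between $n$-homogeneous functors is a weak equivalence if and only if its induced map on $n$-th derivatives (equivalently, on symmetric multi-linear cross effects evaluated along the diagonal) is a weak equivalence of spectra with $\Sigma_n$-action. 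Since $\hodiff_n D_n f$ encodes precisely this derivative data, the conclusion follows. (Alternatively, once Section \ref{sec:diffquillen} establishes that $\diff_n$ is part of a Quillen equivalence on $n$-homogeneous objects, this reflection property is automatic, but for this lemma we rely on the classical result.)
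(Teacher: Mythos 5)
Your proposal is correct, and for the harder direction it takes a genuinely different route from the paper. The forward implication is the same in substance: the paper simply asserts that $\hodiff_n f = \diff_n \fibrep_{\cross} P_n f$ is weakly equivalent to $\diff_n \fibrep_{\cross} D_n f$, which is exactly your natural equivalence $\hodiff_n F \simeq \hodiff_n D_n F$; you supply the justification (vanishing of the $n^{th}$ homotopy cross effect on $(n-1)$--excisive functors, plus preservation of the fibre sequence $D_n F \to P_n F \to P_{n-1}F$) that the paper leaves implicit. For the converse, the paper follows Biedermann--R\"ondigs: it applies $\Sigma^\infty$ objectwise to $f$, uses Goodwillie's Proposition 5.8 (the splitting of the cross effect of a spectrum-valued functor along its diagonal) to upgrade the $\hodiff_n$--equivalence to a $\hocref_n$--equivalence, deduces from Proposition 3.4 that $D_n\Sigma^\infty f$ is an equivalence, and then deloops via $\mathrm{Ev}_0$. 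You instead reflect the equivalence through Goodwillie's classification of $n$--homogeneous functors (Theorem 3.5 together with the Section 5 identification with spectra with $\Sigma_n$--action) applied to the map $D_n f$ of $n$--homogeneous functors. Both routes rest on comparable external input from Goodwillie's Calculus III, and yours creates no circularity with the rest of the paper, which itself invokes Theorem 3.5 later (in the proof of Theorem \ref{thm:diffQE}). The one step you should spell out is the passage between ``$\hodiff_n D_n f$ is an objectwise weak equivalence of spaces'' and ``the induced map of derivative spectra with $\Sigma_n$--action is a $\pi_*$--isomorphism'': this uses that the spaces $(\hodiff_n D_n F)(S^k)$ assemble into that spectrum and that symmetric multilinear functors of spaces take infinite-loop-space values, which is essentially the content that the Proposition 5.8 step makes explicit in the paper's version.
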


\begin{proof}
Let $f$ be a $D_n$-equivalence, so $D_n f$ is an objectwise weak homotopy equivalence.
Since $\hodiff_n f = \diff_n \fibrep_{\cross} P_n f$, it is weakly equivalent to
$\diff_n \fibrep_{\cross} D_n f$, the first half of the result follows.

For the converse, we use a method similar to Biedermann and R\"ondigs \cite[Lemma 6.19]{BRgoodwillie}.
Take some $\hodiff_n$--equivalence $f$.
We can extend this to a map $\Sigma^\infty f$ between
functors which take values in sequential spectra.
Applying $\hodiff_n$ levelwise to $\Sigma^\infty f$
gives an objectwise weak equivalence of spectra.

By Goodwillie \cite[Proposition 5.8]{goodcalc3} it follows that
$\hocref_n \Sigma^\infty f$ is an objectwise
weak equivalence of spectra.
The result \cite[Proposition 3.4]{goodcalc3} (see also \cite[Corollary 6.9]{BRgoodwillie}) implies that
$D_n \Sigma^\infty f$ is also an objectwise weak equivalence.
Hence so is the zeroth level of $D_n \Sigma^\infty f$,
$\textrm{Ev}_0 D_n \Sigma^\infty f$.
The functor $\textrm{Ev}_0$ commutes with $D_n$
(up to objectwise weak equivalence) and
$\textrm{Ev}_0 \Sigma^\infty \simeq \id$ since
we are in a stable model structure.
Thus $D_n f$ is an objectwise weak equivalence.
\end{proof}

We state the following without proof as it follows from \cite[Lemma 6.24]{BRgoodwillie}.

\begin{proposition}
An object of $\wcal \Top_{\nhomog}$ is cofibrant and fibrant if and only if
it is $n$--homogeneous and fibrant
and cofibrant in the cross effect model structure.
The cofibrations of $\wcal \Top_{\nhomog}$ are the cross effect cofibrations
that are $P_{n-1}$-equivalences.
\end{proposition}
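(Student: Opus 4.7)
The plan is to derive both assertions from the general machinery of right Bousfield localization, since $\wcal \Top_{\nhomog}$ was constructed as the localization of $\wcal \Top_{\nexs}$ at the set
\[
M_n = \{ \bigsmashprod{l=1}{n} \wcal(X,-) \mid X \in \skel \wcal \}.
\]
Invoking Hirschhorn's theory \cite{hir03} (as refined in Christensen--Isaksen \cite{CI04}) immediately gives that the fibrations of $\wcal \Top_{\nhomog}$ agree with those of $\wcal \Top_{\nexs}$, so a fibrant object is precisely a cross effect fibrant $n$-excisive functor, and that the cofibrant objects are $M_n$-cofibrant, i.e.\ retracts of relative $M_n$-cell complexes. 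Each generator $\bigsmashprod{l=1}{n} \wcal(X,-)$ is cross effect cofibrant by Corollary \ref{cor:wncofibrations}, so every cofibrant object of $\wcal \Top_{\nhomog}$ is already cross effect cofibrant. This reduces both statements to showing, on the one hand, that an $M_n$-cofibrant $n$-excisive functor is $n$-homogeneous and, conversely, that an $n$-homogeneous cross effect cofibrant-fibrant functor is $M_n$-cofibrant.

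For the forward direction I would argue that each generator is $n$-homogeneous after cross effect fibrant replacement: a direct computation of the homotopy cross effect $\hocref_k$ on $\bigsmashprod{l=1}{n} \wcal(X,-)$ shows it vanishes for $k < n$, which together with the identification of $\hodiff_n$- and $D_n$-equivalences from Lemma \ref{lem:nhomogequivDn} forces $P_{n-1} (\bigsmashprod{l=1}{n} \wcal(X,-)) \simeq \ast$. The class of functors with trivial $P_{n-1}$ is closed under the homotopy pushouts and sequential homotopy colimits used to assemble cell complexes in the stable category $\wcal \Top_{\nhomog}$, so any $M_n$-cell complex shares this property. Combined with $n$-excisiveness this yields $n$-homogeneity.

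For the converse direction I would exhibit an $M_n$-cellular approximation $F^{\textrm{cell}} \to F$ of an $n$-homogeneous cross effect cofibrant-fibrant $F$ by running the small object argument with respect to cells of the form $\bigsmashprod{l=1}{n} \wcal(X,-) \smashprod (S^{k-1}_+ \to D^k_+)$, and then check that this map is a $\hodiff_n$-equivalence using the enriched Yoneda lemma together with the identification $\hodiff_n = \diff_n \fibrep_{\cross} P_n$ from Lemma \ref{lem:fibrepnexs}. Since $F^{\textrm{cell}}$ and $F$ are then weakly equivalent in $\wcal \Top_{\nhomog}$, invariance of cofibrancy under weak equivalences between fibrant objects shows that $F$ itself is cofibrant in $\wcal \Top_{\nhomog}$.

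The cofibration characterization then follows formally. A cofibration in $\wcal \Top_{\nhomog}$ is in particular a cross effect cofibration; its cofiber is cofibrant in $\wcal \Top_{\nhomog}$, hence $n$-homogeneous after $P_n$ by the above, making the original map a $P_{n-1}$-equivalence. Conversely, a cross effect cofibration $f$ that is a $P_{n-1}$-equivalence has cofiber $C$ with $P_{n-1} C \simeq \ast$, so the cellular approximation of the previous paragraph exhibits $C$ as $M_n$-cofibrant, and $f$ then has the requisite left lifting property against $\wcal \Top_{\nhomog}$-acyclic fibrations. The main obstacle throughout is the cellular approximation step: it requires carefully combining the small object argument with the concrete form of $\hodiff_n$ and the stability of $\wcal \Top_{\nhomog}$ in order to see that cell-by-cell weak equivalence in the derivative reassembles into a genuine $\hodiff_n$-equivalence of the total objects.
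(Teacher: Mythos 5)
The paper offers no proof of this proposition---it defers entirely to \cite[Lemma 6.24]{BRgoodwillie}---so a self-contained argument from the formal theory of right Bousfield localisation would be a genuine addition, and your overall architecture (fibrations unchanged, generators $P_{n-1}$-trivial, triviality propagated through cell attachments, converse via colocality) is the correct one. Two points need repair, one local and one fatal. The local one: it is false that $\hocref_k\bigl(\bigsmashprod{l=1}{n}\wcal(X,-)\bigr)$ vanishes for $k<n$; for instance $\hocref_1 G\simeq G$ for any reduced $G$. The correct input is that $A\mapsto\wcal(X,A)^{\smashprod n}$ is $n$-reduced because it is the diagonal of an $n$-variable functor that is reduced in each variable (this is where Goodwillie's analysis of diagonals in \cite{goodcalc3} enters). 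Granting that, the propagation through cells is fine: each generating cell $S^{k-1}_+\smashprod\wcal_n(X,-)\to D^k_+\smashprod\wcal_n(X,-)$ is a map between $P_{n-1}$-trivial functors, hence a $P_{n-1}$-equivalence, and left properness and cofibrant generation of the $(n-1)$-excisive structure carry this through pushouts and transfinite compositions.

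The fatal gap is the last step of your converse direction. Cofibrancy in a right Bousfield localisation (equivalently, colocality) is invariant under weak equivalences of the \emph{underlying} model structure between cofibrant objects, \emph{not} under the new colocal equivalences. If it were, every fibrant object would be cofibrant---its cellular approximation always maps to it by a colocal equivalence---and the localisation would be trivial. Your map $F^{\mathrm{cell}}\to F$ is a $\hodiff_n$-equivalence essentially by construction (it is the coaugmentation of the colocalisation), so verifying that buys nothing; and it is typically \emph{not} an objectwise equivalence, since $F^{\mathrm{cell}}$ need not be $n$-excisive. What must actually be shown is that $\Nat(F,g)$ is a weak equivalence for every $\hodiff_n$-equivalence $g$ between fibrant objects of $\wcal\Top_{\nexs}$; via Lemma \ref{lem:nhomogequivDn} and the fibre sequence $D_nX\to X\to P_{n-1}X$ this reduces to the vanishing of derived natural transformations from an $n$-homogeneous functor into an $(n-1)$-excisive one. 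That vanishing is the real content of \cite[Lemma 6.24]{BRgoodwillie} (ultimately of Goodwillie's classification) and is precisely the point your argument skips. The cofibration characterisation inherits the same problem, and in addition "cross effect cofibration with $P_{n-1}$-trivial cofibre implies $P_{n-1}$-equivalence" is not automatic for space-valued functors ($P_{n-1}$ preserves fibre, not cofibre, sequences); here the stable right-localisation results of \cite{barnesroitzheimstable} that the paper invokes for the generating cofibrations are the appropriate tool.
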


Thus we now see that the cofibrant-fibrant objects of $\wcal \Top_{\nhomog}$
are exactly those functors of $\wcal \Top_{\nexs}$ that are trivial in
$\wcal \Top_{(n-1)\textrm{--nexs}}$. Thus Figure \ref{fig:ses}
is a `short exact sequence' of model categories.

%
\section{Capturing the derivative over a point}\label{sec:intermediate}
%

We begin this section by giving a stable model structure for the category of spectra with a $\Sigma_n$-action (as these classify the $n$-homogeneous functors).
It plays the role analogous to the intermediate category $O(n)\mathcal{E}_n$ of Barnes and Oman, see \cite[Section 7]{barnesoman13}.
This category has been designed to receive Goodwillie's derivative and
we shall show in Section \ref{sec:diffquillen} that the derivative is part of a Quillen equivalence.

After defining the category $\devcat$, we establish the projective model structure in Theorem \ref{thm:devcatprojmod}, then left Bousfield localise to get the stable structure. This makes use of the definition of $n \pi_*$-isomorphisms (analogous to  \cite[Definition 7.7]{barnesoman13}).

%
\subsection{A model category for spectra with a \texorpdfstring{$\Sigma_n$}{Sigma\textunderscore n}-action}\label{subsec:sigmanmodel}
%

One can model spectra by putting a stable model structure on $\wcal\Top$ as in
Mandell et al.\ \cite{mmss01}. This model category is Quillen equivalent to the other models of the
stable homotopy category. We perform the same operation but $\Sigma_n$-equivariantly.

The next result follows immediately from applying the
transfer argument, Hirschhorn \cite[Theorem 11.3.2]{hir03},
to the free functor  $(\Sigma_n)_+ \smsh -: \Top \ra \Sigma_n \circlearrowleft \Top$
where $\Top$ is equipped with the $q$-model structure.
See also Mandell and May \cite[Section II.1]{mm02}.

\begin{lemma}\label{lem:gspaces}
The category $\Sigma_n \lca \Top$ of based spaces with an action of $\Sigma_n$
has a cofibrantly generated monoidal and proper model structure.
The weak equivalences are those which are
weak homotopy equivalences after forgetting the $\Sigma_n$-action.
Similarly, the fibrations are those maps whose underlying map
in $\Top$ is a Serre fibration. The cofibrant objects are free.
The monoidal product is given equipping the smash product of
two $\Sigma_n$-spaces with the diagonal action.
The internal function object is given by equipping the space of
non-equivariant maps with the conjugation action:
if $f \in \Top(X,Y)$, $\sigma \cdot f = \sigma_Y \circ f \circ \sigma_X^{-1}$.
\end{lemma}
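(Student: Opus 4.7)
The plan is to apply Hirschhorn's transfer theorem \cite[Theorem 11.3.2]{hir03} along the free--forgetful adjunction
\[
(\Sigma_n)_+ \smsh - \co \Top \rightleftarrows \Sigma_n \lca \Top \co U,
\]
where $\Top$ carries the $q$-model structure. The candidate generating (trivial) cofibrations are obtained by applying the free functor to $I_{\Top}$ and $J_{\Top}$, and the candidate weak equivalences and fibrations are defined as those $f$ for which $Uf$ is a weak homotopy equivalence, respectively a Serre fibration. Since $U$ is right adjoint to a strong monoidal free functor, the lifting characterisations of these classes match the definitions forced by the adjunction.

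The hypotheses of the transfer theorem that require work are smallness and acyclicity. For smallness, observe that $U$ preserves filtered colimits, since these are computed on underlying spaces in $\Sigma_n \lca \Top$; hence the domains of the generating sets, being free orbits on small objects of $\Top$, remain small in $\Sigma_n \lca \Top$. For acyclicity of relative $J$-cell complexes, I use that any pushout in $\Sigma_n \lca \Top$ along $(\Sigma_n)_+ \smsh j$ has underlying space a disjoint union of $n!$ copies of the corresponding pushout along $j$; since each $j \in J_{\Top}$ is a trivial $q$-cofibration of spaces, and trivial $q$-cofibrations are closed under coproducts, pushouts and transfinite composition, the underlying map of any relative $J$-cell complex is a weak homotopy equivalence. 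This is the only non-formal step in the argument.

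Identification of the cofibrant objects as the free $\Sigma_n$-spaces is immediate, since every $I$-cell attachment adds a free orbit and retracts of free $\Sigma_n$-spaces are free. Left and right properness descend from $\Top$: pushouts along cofibrations and pullbacks along fibrations are computed on underlying spaces (as $U$ is both a left and right adjoint for this structure -- it has the further right adjoint $\Hom((\Sigma_n)_+,-)$), and the weak equivalences are detected underlying. For the monoidal structure, diagonal action on $\smsh$ and conjugation action on $\Hom$ give a closed symmetric monoidal structure on $\Sigma_n \lca \Top$; the pushout-product axiom reduces to that of the $q$-model structure via the identity
\[
\bigl((\Sigma_n)_+ \smsh X\bigr) \smsh \bigl((\Sigma_n)_+ \smsh Y\bigr)
\cong
(\Sigma_n)_+ \smsh \bigl((\Sigma_n)_+ \smsh X \smsh Y\bigr),
\]
and the fact that the pushout-product of free maps is again free on the pushout-product of the underlying maps of spaces.

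The main potential obstacle is the acyclicity check outlined in the second paragraph; everything else is bookkeeping with adjunctions. One could instead cite Mandell--May \cite{mm02} directly, since this is a very mild special case (trivial family of subgroups) of their treatment of equivariant orthogonal spectra, but the direct transfer argument is short enough to record in place.
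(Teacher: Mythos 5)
Your proposal is correct and follows exactly the paper's route: the paper simply invokes Hirschhorn's transfer theorem \cite[Theorem 11.3.2]{hir03} along the free functor $(\Sigma_n)_+ \smsh - \co \Top \to \Sigma_n \lca \Top$ with the $q$-model structure on $\Top$, citing Mandell--May \cite[Section II.1]{mm02} as an alternative, and you have filled in the smallness, acyclicity, properness and monoidal checks that this leaves implicit. One sentence is slightly misstated --- the underlying space of a pushout along $(\Sigma_n)_+ \smsh j$ is not a disjoint union of $n!$ copies of a pushout along $j$; rather the underlying \emph{map} is a cobase change of a wedge of $n!$ copies of $j$ --- but the closure properties you then invoke give the correct acyclicity argument regardless.
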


Combining the projective model structure on $\wcal \Top$ (Lemma \ref{lem:projmod}) with Lemma \ref{lem:gspaces},
we obtain the following model structure
on  $\Sigma_n\circlearrowleft \wcal \Top$, the category of
$\Sigma_n$--objects in $\wcal \Top$ and $\Sigma_n$--equivariant morphisms.

\begin{lemma}\label{lem:projmod-2}
The \textbf{projective model structure} on the category $\Sigma_n\circlearrowleft \wcal \Top$ has as generating sets
\[
\begin{array}{rcl}
I_{\Sigma_n \lca \Top}  & = & \{
\wcal(X,-) \smsh (\Sigma_n)_+ \smsh i \ | \ i \in I_{\Top}, X \in \skel \wcal  \} \\
J_{\Sigma_n \lca \Top} & = & \{
\wcal(X,-) \smsh (\Sigma_n)_+ \smsh j \ | \ j \in J_{\Top}, X \in \skel \wcal  \}.
\end{array}
\]

A fibration (resp. weak equivalence) in this model structure is a $\Sigma_n$-equivariant map $f$
such that each $f(X)$ is a $q$-fibration (resp. weak homotopy equivalence) of the underlying non-equivariant spaces.
If $F\in \Sigma_n\circlearrowleft \wcal \Top$ is cofibrant, then each $F(X)$ is a free $\Sigma_n$-space. This model structure is proper, cofibrantly generated and  topological.
\end{lemma}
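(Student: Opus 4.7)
The plan is to obtain this model structure by a transfer argument along the free-forgetful adjunction
\[
(\Sigma_n)_+ \smashprod - \co \wcal \Top \rightleftarrows \Sigma_n \lca \wcal \Top \co U,
\]
where the left hand side carries the projective model structure of Lemma \ref{lem:projmod}, and then to verify the additional properties (freeness of cofibrant objects, properness, and the topological axiom) directly. I would invoke Hirschhorn \cite[Theorem 11.3.2]{hir03}: the generating sets $I_{\Sigma_n \lca \Top}$ and $J_{\Sigma_n \lca \Top}$ in the statement are exactly $(\Sigma_n)_+ \smashprod -$ applied to $I_{\wcal \Top}$ and $J_{\wcal \Top}$, so once the transfer conditions hold, the generating sets and the characterisation of fibrations and weak equivalences as those maps which are objectwise $q$-fibrations (resp.\ weak homotopy equivalences) after forgetting the $\Sigma_n$-action follow from standard adjointness.

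Next I would check the two hypotheses of the transfer theorem. Smallness of the domains of $I_{\Sigma_n \lca \Top}$ and $J_{\Sigma_n \lca \Top}$ follows because $(\Sigma_n)_+ \smashprod -$ preserves smallness (it is a left adjoint, and $\Sigma_n$ is finite), together with the corresponding smallness statements for $\wcal \Top$. For the second hypothesis, one must show that every relative $J_{\Sigma_n \lca \Top}$-cell complex is a weak equivalence; since the weak equivalences are detected objectwise after forgetting the $\Sigma_n$-action, it suffices to observe that $U$ sends a cellular attachment along $\wcal(X,-) \smashprod (\Sigma_n)_+ \smashprod j$ to an iterated cobase change along a coproduct of $|\Sigma_n|$ copies of the acyclic cofibration $\wcal(X,-) \smashprod j$, which is still an acyclic cofibration in $\wcal \Top$.

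Given the existence of the model structure, the remaining properties are routine. A cofibrant object is a retract of a cell complex whose cells are of the form $\wcal(X,-) \smashprod (\Sigma_n)_+ \smashprod D^k_+$; the $\Sigma_n$-action on each such cell is free objectwise, and freeness is preserved by coproducts, pushouts along free $\Sigma_n$-maps, transfinite composition, and retracts, so every cofibrant $F$ is objectwise free. Properness follows from properness of $\wcal \Top_{\mathrm{proj}}$ (Lemma \ref{lem:projmod}): pushouts, pullbacks, weak equivalences, cofibrations along which we push out, and fibrations along which we pull back are all detected or preserved by $U$, and $U$ reflects the relevant classes.

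The last point to verify is the topological axiom (Definition \ref{def:topological}). For a cofibration $i$ and a fibration $p$ in $\Sigma_n \lca \wcal \Top$ the pullback-corner map
\[
\Nat(i^{\ast}, p_{\ast}) \co \Nat(X, E) \longrightarrow \Nat(A, E) \x_{\Nat(A, B)} \Nat(X, B)
\]
lands in spaces of $\Sigma_n$-equivariant natural transformations, but since we only need it to be a Serre (resp.\ acyclic) fibration of underlying spaces and the $\Sigma_n$-fixed points of a product of Serre fibrations is a Serre fibration, one reduces to the pushout-product axiom in $\wcal \Top$ together with the fact that $(\Sigma_n)_+ \smashprod -$ preserves cofibrations and acyclic cofibrations. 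I expect the main subtlety to be the first of these verifications, namely identifying pushouts of the transferred generating acyclic cofibrations as weak equivalences, although as outlined above this reduces cleanly to the non-equivariant statement because $(\Sigma_n)_+ \smashprod -$ commutes with colimits and $U$ preserves and reflects objectwise weak equivalences.
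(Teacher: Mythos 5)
Your proposal is correct and follows essentially the same route as the paper, which obtains this structure by the transfer argument of Hirschhorn \cite[Theorem 11.3.2]{hir03} along the free functor $(\Sigma_n)_+ \smashprod -$ (stated for spaces in Lemma \ref{lem:gspaces}) combined with the projective diagram structure of Lemma \ref{lem:projmod}; transferring directly at the level of the functor category, as you do, yields the identical generating sets and characterisation of fibrations and weak equivalences. One small remark: in your verification of the topological axiom the clause about $\Sigma_n$-fixed points of Serre fibrations is neither quite right nor needed --- the clean argument is to reduce to generating cofibrations and use the isomorphism $\Nat_{\Sigma_n}((\Sigma_n)_+ \smashprod Z, E) \cong \Nat(Z, UE)$, which turns the equivariant pullback-corner map into the non-equivariant one, exactly as the rest of your sentence indicates.
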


We now modify the projective model structure to obtain the stable model structure. We first
relate $\Sigma_n\circlearrowleft \wcal \Top$ to sequential spectra, which
allows us to define the weak equivalences of the stable model structure.

\begin{definition}\label{def:underspec}
Let $F \in \Sigma_n\circlearrowleft \wcal \Top$ and $A \in \wcal$.
We define a spectrum $F[A]$  via
\[
F[A]_k : = F(A \smashprod S^k),
\]
where we have forgotten the $\Sigma_n$-action.
The assembly maps provide the structure maps of $F[A]$ as well as maps $F[A] \smashprod B \to F[A \smashprod B]$.
We call $F[S^0]$ the \textbf{underlying spectrum of $F$}.
\end{definition}

\begin{definition}\label{def:piiso}
A map $f \co F \to G$ in $\Sigma_n\circlearrowleft \wcal \Top$
is said to be a \textbf{$\pi_*$-isomorphism} if
$f$ induces a $\pi_*$-isomorphism on the underlying
spectra of $F$ and $G$.
\end{definition}

We then have the following $\Sigma_n$-equivariant analogue of
Mandell et al.\ \cite[Theorem 9.2]{mmss01}, which we state without proof.
Note that we are using the absolute stable model structure
of \cite[Section 17]{mmss01}.

\begin{lemma} \label{lem:stabmaps}
There is a stable model structure on $\Sigma_n\circlearrowleft \wcal \Top$.
It is formed by left Bousfield localising the projective model structure
at the set of maps below
\[
\{
\wcal( A \smashprod S^1,-) \smashprod S^1
\longrightarrow
\smashprod \wcal( A,-)
\  |  \ A \in \skel \wcal  \} \\
\]
The cofibrations are the same as for the projective model structure
and the weak equivalences are the $\pi_*$-isomorphisms.
This model structure is cofibrantly generated, proper and topological.
We denote it by $\Sigma_n \circlearrowleft \wcal \Sp$.
\end{lemma}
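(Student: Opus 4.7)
The plan is to apply the left Bousfield localisation machinery of Hirschhorn \cite{hir03} to the projective model structure on $\Sigma_n \lca \wcal \Top$ from Lemma \ref{lem:projmod-2}, adapting the template of Mandell et al.\ \cite[Section 17]{mmss01} to the $\Sigma_n$-equivariant setting. First I would verify Hirschhorn's hypotheses: by Lemma \ref{lem:projmod-2} the projective structure is cofibrantly generated, left proper and topological. Cellularity of the projective structure on $\Sigma_n \lca \wcal \Top$ reduces to cellularity of $\Sigma_n \lca \Top$ with its $q$-model structure, which follows from the standard transfer arguments for the free orbit functor. By \cite[Theorem 4.1.1]{hir03}, the left Bousfield localisation at the specified set exists. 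This automatically produces a model structure in which the cofibrations coincide with the projective cofibrations, left properness is inherited, and the topological enrichment is preserved because the projective structure is topological.

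The key step is to identify the local weak equivalences with the $\pi_*$-isomorphisms of Definition \ref{def:piiso}. I would use the forgetful functor $U \co \Sigma_n \lca \wcal \Top \to \wcal \Top$, which is right Quillen for the projective structures with left adjoint $(\Sigma_n)_+ \smashprod -$, and which detects and reflects both fibrations and weak equivalences. First, every fibrant local object is a $\Sigma_n$-object whose underlying $\wcal$-space is a fibrant local object of the non-equivariant stable model structure on $\wcal \Top$, i.e., an $\Omega$-spectrum in the sense of \cite[Section 8]{mmss01}. Second, since the projective generating cofibrations are of the form $\wcal(X,-) \smsh (\Sigma_n)_+ \smsh i$, cofibrant objects are built from free $\Sigma_n$-orbits, and the free-forgetful adjunction identifies equivariant derived mapping spaces out of such objects with non-equivariant derived mapping spaces of the underlying $\wcal$-spaces. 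Combining these observations, a map $f$ is a local equivalence in $\Sigma_n \lca \wcal \Top$ if and only if $Uf$ is a local equivalence in $\wcal \Top$, which by \cite[Theorem 9.2]{mmss01} is the same as $Uf$ being a $\pi_*$-isomorphism, matching Definition \ref{def:piiso}.

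Right properness completes the proof: a pullback square in $\Sigma_n \lca \wcal \Top$ is computed objectwise, fibrations in the localisation have underlying Serre fibrations (as they are in particular projective fibrations), and $\pi_*$-isomorphisms are detected on underlying spectra, so right properness follows from the corresponding statement in \cite[Theorem 9.2]{mmss01}. The main obstacle I anticipate is the second step — ensuring that the derived mapping spaces from projectively cofibrant equivariant objects yield underlying rather than fixed-point information. This requires careful bookkeeping with the free-orbit structure of projective cofibrant objects to confirm that local equivalences indeed match $\pi_*$-isomorphisms of underlying spectra, rather than a more refined notion such as homotopy fixed-point $\pi_*$-isomorphisms; once this is pinned down, everything else follows by transporting the non-equivariant theory across the forgetful functor.
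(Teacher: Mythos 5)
The paper states this lemma explicitly \emph{without proof}, as ``the $\Sigma_n$-equivariant analogue of Mandell et al.\ [Theorem 9.2]'', so your proposal is supplying an argument the authors omit; the route you take (Hirschhorn localisation of the projective structure, then identification of the local equivalences with $\pi_*$-isomorphisms by transport along the free--forgetful adjunction) is exactly the standard one being invoked, and it is the same strategy the paper carries out in detail for $\devcat$ in Corollary \ref{cor:lambdas} and Proposition \ref{prop:devcatmodel}.

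One point deserves more than the cautionary remark you give it at the end. The maps in the localising set as written, $\wcal(A \smashprod S^1,-)\smashprod S^1 \to \wcal(A,-)$, carry the trivial $\Sigma_n$-action and are therefore \emph{not} between projectively cofibrant objects of $\Sigma_n \lca \wcal\Top$ (cofibrant objects are free). Hirschhorn's localisation is defined via homotopy function complexes, so localising at these maps literally imposes the condition that $F(A)^{h\Sigma_n} \to \bigl(\Omega F(A\smashprod S^1)\bigr)^{h\Sigma_n}$ be an equivalence --- a homotopy-fixed-point condition, which is a priori weaker than the underlying $\Omega$-spectrum condition needed for the weak equivalences to be the $\pi_*$-isomorphisms of Definition \ref{def:piiso}. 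Your sentence ``a map $f$ is a local equivalence if and only if $Uf$ is a local equivalence in $\wcal\Top$'' is where this would bite: it holds for mapping spaces out of free objects, but the localising maps are not free. The clean fix is to localise instead at the set $\{(\Sigma_n)_+ \smashprod \lambda_A\}$ (equivalently, to observe that the cofibrant replacements of the stated maps should be taken in the free cells), exactly as the paper does for $\devcat$ where the generating acyclic cofibrations explicitly carry a $(\Sigma_n)_+$ factor. With that adjustment your identification of the local equivalences, and hence the whole argument including the properness and topological-enrichment claims, goes through.
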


%
\subsection{Definition \texorpdfstring{of $\devcat$}{} and the projective model structure}\label{subsec:devcat}
%

We are interested in the functor $\diff_n$, which
is defined in terms of maps out of the functor
$\wcal_n (X, -)$ (Definition \ref{def:Wnjet}).
To help us study $\diff_n$ we construct
a category $\devcat$ where the $\wcal_n (X, -)$ are the representable functors.
In Section \ref{sec:diffquillen} we
show how $\diff_n$ takes values in this category.
We also note that the stable model structure on
$\devcat$ is very similar to the constructions of equivariant orthogonal spectra
in Mandell and May \cite{mm02}. In Section \ref{sec:compare} we will show it is
Quillen equivalent to spectra with a $\Sigma_n$-action.

The motivation for this construction was the classification of $n$-homogenous functors in orthogonal calculus done by Barnes and Oman \cite{barnesoman13}, by means of the category
 $O(n) \ecal_n$ (which in our current notation is $\orthdevcat$).
This category is a variation of the usual
model structure on orthogonal spectra with an $O(n)$ action; the model structure of
Mandell et al.\ \cite{mmss01} on orthogonal spectra transferred over the functor $O(n)_+\smsh -$.

\begin{definition}\label{def:Wnjet}
Let $\wcal_n$ be the category enriched over topological spaces with $\Sigma_n$-action
whose objects are those of $\wcal$ and whose spaces of morphisms are given by
\[
\wcal_n (X,Y) := \bigsmashprod{i=1}{n} \wcal (X, Y)
\]
with the $\Sigma_n$-action which permutes the factors.
(Note that this differs from the wreath product of
Definition \ref{def:wreath}.)
\end{definition}

\begin{definition}
The category $\devcat$  is the category of $\Sigma_n\circlearrowleft  \Top$-enriched functors from
$\wcal_n$ to $\Sigma_n \circlearrowleft \Top$.
\end{definition}

A functor $X$ in the category $\devcat$ consists of the following information:
a collection of based $\Sigma_n$-spaces $X(A)$
for each $A \in \wcal_n$ and a collection of  $\Sigma_n$-equivariant maps
of based  $\Sigma_n$-spaces
\[
X_{A,B} \co \wcal_n(A,B) \longrightarrow \Top (X(A), X(B))
\]

for each pair $A$, $B$ in $\wcal_n$.
The $\Sigma_n$-structure on $\Top(X(A), X(B))$ is given by conjugation.
The maps $X_{A,B}$ must be compatible with composition and also associative and unital.
They induce a structure map, where $\Sigma_n$ acts diagonally on the smash product:
\[
X(A) \smashprod \wcal_n(A,B) \longrightarrow X(B).
\]
Note that when $n=1$, $\devcat$ is just $\wcal\Top$.

We present the following without proof, as it is basically that of \cite[Theorem 6.5]{mmss01}.

\begin{theorem}\label{thm:devcatprojmod}
$\devcat$ has a projective model structure, starting with the free model structure on $\Sigma_n$-spaces. The generating cofibrations and trivial cofibrations are
\[
\begin{array}{rcl}
I_{\wcal_n} &=&\{ \wcal_n (A,-) \smashprod (\Sigma_n)_+ \smashprod i \ | \ i \in I_{\Top}, \ A\in \skel \wcal\}\\
J_{\wcal_n} &=&\{ \wcal_n (A,-) \smashprod (\Sigma_n)_+ \smashprod j \ | \ j \in J_{\Top}, \ A\in \skel \wcal \}.
\end{array}
\]
This defines a compactly generated topological proper model category
denoted $\devcat_{\proj}$.
\end{theorem}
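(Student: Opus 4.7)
The plan is to apply the standard cofibrantly generated lifting theorem (Hirschhorn \cite[Theorem 11.3.2]{hir03}) to transfer the free model structure on $\Sigma_n \lca \Top$ of Lemma \ref{lem:gspaces} along the family of free-forgetful adjunctions. For each $A \in \skel \wcal$ the left adjoint sends a $\Sigma_n$-space $K$ to $\wcal_n(A,-) \smashprod K$, and the right adjoint is evaluation at $A$. I would declare a map $f$ in $\devcat$ to be a weak equivalence (respectively fibration) if and only if each $f(A)$ is one in $\Sigma_n \lca \Top$; the generating sets $I_{\wcal_n}$ and $J_{\wcal_n}$ are then the images of the generators from Lemma \ref{lem:gspaces} under the free functors, exactly as stated.

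There are two standard hypotheses to verify. The smallness hypothesis follows from the enriched Yoneda lemma: a morphism out of $\wcal_n(A,-) \smashprod (\Sigma_n)_+ \smashprod K$ is determined by a $\Sigma_n$-equivariant map $K \to X(A)$, so smallness with respect to objectwise $h$-cofibrations reduces to compactness of the finite CW-complex $K$, which is routine and moreover gives the stronger \emph{compactly} generated property claimed in the statement. The acyclicity hypothesis requires every relative $J_{\wcal_n}$-cell complex to be an objectwise weak equivalence. Evaluated at any $B \in \wcal$, a generator in $J_{\wcal_n}$ becomes $\wcal_n(A,B) \smashprod (\Sigma_n)_+ \smashprod j$ for $j \in J_{\Top}$, which is a free acyclic cofibration of $\Sigma_n$-spaces; the crucial point is that the $(\Sigma_n)_+$ factor makes the action free, so pushouts and transfinite compositions remain weak equivalences on underlying spaces by the corresponding property of the $q$-model structure on $\Top$.

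Properness and the topological structure then transfer from the base. Right properness is immediate from the objectwise definition of fibrations and weak equivalences together with right properness of $\Sigma_n \lca \Top$. Left properness follows because cofibrations in $\devcat$ are objectwise retracts of cellular extensions built from free $\Sigma_n$-spaces, and such maps are preserved under cobase change along objectwise weak equivalences. The pushout-product axiom of Definition \ref{def:topological} reduces, via the enriched Yoneda isomorphism, to the analogous axiom in $\Sigma_n \lca \Top$. The one point where bookkeeping is required, exactly as in \cite[\S 6]{mmss01}, is checking that the diagonal $\Sigma_n$-action on a generating cell $\wcal_n(A,-) \smashprod (\Sigma_n)_+ \smashprod K$ interacts correctly with the permutation action on $\wcal_n(A,B)$ and the conjugation action on internal hom objects; this is automatic from the presence of the free $(\Sigma_n)_+$ smash factor in every generator, and is the only genuine subtlety in an otherwise formal argument.
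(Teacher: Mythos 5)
Your argument is correct and is essentially the one the paper intends: the paper states this theorem without proof, citing \cite[Theorem 6.5]{mmss01}, whose proof is exactly the transfer of the free model structure on $\Sigma_n \lca \Top$ along the free--forgetful adjunctions, with smallness and acyclicity checked via the enriched Yoneda lemma just as you do. The only quibble is cosmetic: a map out of $\wcal_n(A,-) \smashprod (\Sigma_n)_+ \smashprod K$ corresponds to a non-equivariant map $K \to X(A)$ (equivalently an equivariant map out of the free object $(\Sigma_n)_+ \smashprod K$), not an equivariant map out of $K$ itself, but this does not affect the argument.
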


\subsection{The stable equivalences}

We will equip $\devcat$ with a stable model structure. To do so,
we must define the weak equivalences.
Compare the following with Barnes and Oman \cite[Definition 7.7]{barnesoman13} and
Definitions \ref{def:underspec} and \ref{def:piiso}.

\begin{definition}\label{def:npistardevcat}The $n$--homotopy groups of an object $F$ of $\devcat$ at $A$ are
denoted $n \pi_p^A(F)$ and defined as
$n \pi_p^A(F) := \colim_{k \in\mathbb{Z}} \pi_p (\Omega^{nk} F(A \smashprod S^k) )
\cong \colim_{k \in\mathbb{Z}} \pi_{p+nk} (F(A \smashprod S^k) ).$
The maps of this colimit diagram are induced by adjoints of the structure maps of $F$
\[
F(A \smashprod S^k) \smashprod S^n
=
F(A \smashprod S^k) \smashprod \wcal_n(S^0, S^1)
\lra F(A \smashprod S^{k+1}).
\]
A map is said to be an \textbf{$n\pi_\ast^A$-isomorphism} if it induces isomorphisms on $n \pi_p^A$ for all $p \in \Z$.
\end{definition}

We establish independence of choice of space $A$ via Proposition \ref{prop:pistar},  which follows by the same arguments as in Mandell et al.\ \cite[Proposition 17.6]{mmss01}, so we omit the proof.
Consequently, we may speak of \textbf{$n$--homotopy groups} and \textbf{$n\pi_\ast$-isomorphisms}
without reference to a choice of space $A$.
\begin{proposition} \label{prop:pistar}
A map $f \co F \to G$ in $\devcat$ is
an $n \pi_*^A$-isomorphism for $A=S^0$ if and only if
it is an $n \pi_*^A$-isomorphism for all $A \in \wcal$. We therefore call an $n\pi_\ast^{S^0}$ isomorphism an \textbf{$n\pi_\ast$-isomorphism}.
\end{proposition}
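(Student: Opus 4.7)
The forward direction is immediate (specialise to $A = S^0$). For the converse, assume $f \co F \to G$ is an $n\pi_*^{S^0}$-isomorphism and define
\[
\mathcal{A} = \{ A \in \wcal \mid f \text{ induces an isomorphism on } n\pi_*^A \}.
\]
The plan is to show $\mathcal{A} = \wcal$ by first extending the hypothesis from $S^0$ to all spheres via a reindexing argument, and then running a cellular induction on $A$ using closure under cofibre extensions.

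For the sphere case, the colimit defining $n\pi_p^{A \smsh S^\ell}(F)$ can be reindexed: the substitution $k \mapsto k - \ell$ matches the transition maps of the two colimits (these maps come from the structure maps of $F$ as a $\wcal_n$-enriched functor) and yields a natural isomorphism
\[
n\pi_p^{A \smsh S^\ell}(F) \;=\; \colim_k \pi_{p+nk}\bigl(F(A \smsh S^{\ell+k})\bigr) \;\cong\; n\pi_{p-n\ell}^A(F).
\]
In particular $S^\ell \in \mathcal{A}$ for every $\ell \geq 0$, and $\mathcal{A}$ is closed under smashing with spheres. This is the direct analogue of the reindexing step in Mandell et al.\ \cite[Proposition 17.6]{mmss01}.

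Next I would show $\mathcal{A}$ is closed under cofibre extensions: for a cofibre sequence $A' \hookrightarrow A \to A''$ in $\wcal$, if any two of the three spaces lie in $\mathcal{A}$ then so does the third. The ingredient is a natural long exact sequence
\[
\cdots \lra n\pi_p^{A'}(F) \lra n\pi_p^A(F) \lra n\pi_p^{A''}(F) \lra n\pi_{p-1}^{A'}(F) \lra \cdots,
\]
obtained by smashing the cofibre sequence with $S^k$ for each $k$, applying $F$, and exploiting that in the stable colimit defining $n\pi_*$ the resulting sequence of spaces behaves like a fibre sequence of spectra. The five lemma then delivers the two-out-of-three closure. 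A parallel argument handles finite wedges via the cofibre sequence $A \hookrightarrow A \vee B \to B$.

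With these closure properties in hand, cellular induction completes the proof: any $A \in \wcal$ admits a finite filtration $\ast = A_0 \subset A_1 \subset \cdots \subset A_N = A$ in which each $A_i \hookrightarrow A_{i+1}$ is the attachment of a single cell, fitting into a cofibre sequence $A_i \to A_{i+1} \to S^m$. Since $\ast \in \mathcal{A}$ trivially, $S^m \in \mathcal{A}$ by the sphere case, and $\mathcal{A}$ is closed under cofibre extensions, induction on $i$ gives $A \in \mathcal{A}$. The main obstacle is the long exact sequence in the $A$-variable: verifying it rigorously requires unpacking how the prespectrum $k \mapsto F(A \smsh S^k)$ converts the cofibre structure of $A$ into the stable fibre structure needed for exactness, using the continuity of $F$ and the stabilisation built into the definition of $n\pi_*^A$. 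This is precisely the step that MMSS carry out carefully in their setting and whose adaptation to $\devcat$ is the technical heart of the argument.
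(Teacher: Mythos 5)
Your forward direction and the reindexing isomorphism $n\pi_p^{A\smsh S^\ell}(F)\cong n\pi_{p-n\ell}^A(F)$ are correct, and the overall shape of your argument (spheres first, then cellular induction) is what the paper intends by citing \cite[Proposition 17.6]{mmss01} --- the paper omits the proof entirely. However, the step you yourself flag as the technical heart, the long exact sequence in the $A$-variable, is false for $n\geq 2$, and with it the claim that finite wedges split. The stabilisation in Definition \ref{def:npistardevcat} loops by $\Omega^{nk}$ while suspending the input only once per stage, so $A\mapsto n\pi_*^A(F)$ behaves like an $n$-homogeneous functor of $A$, not a linear one, and it does not convert cofibre sequences in $A$ into long exact sequences. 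Concretely, take $F=\wcal_n(S^0,-)$, so that $F(B)=B^{\smsh n}$ and
\[
n\pi_p^A(F)\;=\;\colim_k\pi_{p+nk}\bigl(A^{\smsh n}\smsh S^{nk}\bigr)\;\cong\;\pi_p^s\bigl(A^{\smsh n}\bigr).
\]
For the split cofibre sequence $S^0\to S^0\vee S^0\to S^0$ your long exact sequence would force $\pi_p^s\bigl((S^0\vee S^0)^{\smsh n}\bigr)\cong\pi_p^s\oplus\pi_p^s$, whereas $(S^0\vee S^0)^{\smsh n}$ is a wedge of $2^n$ copies of $S^0$ and gives $(\pi_p^s)^{2^n}$. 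In other words, the MMSS argument you are reproducing adapts verbatim only when $n=1$ (which is literally their setting, where the stabilised functor is excisive); for $n\geq 2$ the sequence $F(A'\smsh S^k)\to F(A\smsh S^k)\to F(A''\smsh S^k)$ is not stably a fibre sequence.

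To repair the converse you must not run the cofibre-sequence induction on $A\mapsto n\pi_*^A(F)$ itself. The standard route is to compare the object $F[A]=\{F(A\smsh S^k)\}_k$ with $F[S^0]\smsh A^{\smsh n}$ via the assembly map built from $A^{\smsh n}=\wcal_n(S^0,A)\to\wcal_n(S^k,A\smsh S^k)$; once that comparison is known to be an isomorphism on $\colim_k\pi_{p+nk}$, the proposition reduces to the fact that smashing a $\pi_*$-isomorphism of spectra with the fixed finite complex $A^{\smsh n}$ preserves $\pi_*$-isomorphisms, and there the cofibre-sequence/five-lemma induction is legitimate because smashing a spectrum with a space is linear in the space variable. (Alternatively one can use Spanier--Whitehead duality for the finite complex $A$.) Be aware that even the assembly-map comparison cannot be proved by naive induction on the cells of $A$, since $A\mapsto A^{\smsh n}$ also destroys cofibre sequences; the induction has to be organised so that the linear variable is the one being filtered. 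This missing mechanism is exactly the gap in your write-up.
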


The following Corollary is analogous to  \cite[Lemma 8.6]{mmss01}.
This says that our $n$-stable equivalences are in particular $n \pi_*$-isomorphisms.

\begin{corollary}\label{cor:lambdas}
The generalised evaluation maps
\[
\lambda_{A,n} \co \wcal_n(A \smashprod S^1, -) \smashprod S^n \longrightarrow \wcal_n(A, -)
\]
are $n \pi_*$-isomorphisms, as are the
morphisms $(\Sigma_n)_+ \smashprod \lambda_{A,n} $.
\end{corollary}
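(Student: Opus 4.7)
My plan is to prove Corollary \ref{cor:lambdas} by identifying $\lambda_{A,n}$ as an $n$--fold smash product of the classical MMSS evaluation map $\lambda_A \co \wcal(A \smashprod S^1,-) \smashprod S^1 \to \wcal(A,-)$, and then reducing to a connectivity estimate that feeds directly into the colimit defining $n \pi_*$. Since $\wcal_n(X,Y) = \wcal(X,Y)^{\smashprod n}$ and $S^n \cong (S^1)^{\smashprod n}$, after a shuffle isomorphism of the smash factors we can identify
\[
\lambda_{A,n} \;\cong\; \lambda_A^{\smashprod n} \co \bigl(\wcal(A \smashprod S^1,-) \smashprod S^1\bigr)^{\smashprod n} \longrightarrow \wcal(A,-)^{\smashprod n},
\]
and the shuffle is $\Sigma_n$--equivariant when $\Sigma_n$ acts by permutation of factors on both sides.

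The next step is to estimate, for a finite based CW complex $A$ of dimension $d$, the connectivity of $\lambda_A(S^k)^{\smashprod n}$. By adjunction, $\wcal(A \smashprod S^1,S^k) \cong \Omega \wcal(A,S^k)$, and standard obstruction theory gives that $\wcal(A,S^k)$ is $(k-d-1)$--connected. Freudenthal's suspension theorem then shows that $\lambda_A(S^k)$ is at least $(2k-2d-1)$--connected. Iteratively applying the fact that smashing a $c$--connected map with an $a$--connected space yields a map of connectivity $\geqslant c+a+1$, we obtain that $\lambda_A(S^k)^{\smashprod n}$ is at least $((n+1)k - (n+1)d - 1)$--connected, and smashing with $S^n$ only improves this by $n$.

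Now feed this into the colimit: by Proposition \ref{prop:pistar} it suffices to check $n \pi_*^{S^0}$. Evaluated at $S^k$, the map $\lambda_{A,n}$ induces $\pi_{p+nk}$--isomorphisms as soon as $p+nk$ is strictly less than $(n+1)k - (n+1)d + n - 1$, which happens for all sufficiently large $k$. Passing to the colimit defining $n \pi_p^{S^0}$ shows that $\lambda_{A,n}$ is an $n \pi_*$--isomorphism. For the final assertion, smashing with $(\Sigma_n)_+$ gives, after forgetting the $\Sigma_n$--action, a wedge of $n!$ copies of $\lambda_{A,n}$, so the underlying spectrum map is still a $\pi_*$--isomorphism and hence $(\Sigma_n)_+ \smashprod \lambda_{A,n}$ is also an $n \pi_*$--isomorphism.

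The only delicate point is organizing the iterated smash-product connectivity bound cleanly; the identification of $\lambda_{A,n}$ with $\lambda_A^{\smashprod n}$ and the final colimit passage are both essentially formal.
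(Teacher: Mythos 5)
Your proof is correct and takes essentially the same route as the paper's: the paper likewise reduces to checking that $\colim_{k} \pi_{p+nk}$ of the map evaluated at spheres is an isomorphism, identifying $\lambda_{A,n}(S^k)$ as an $n$-fold version of the counit $\Sigma\Omega X \to X$, and you are simply making the underlying Freudenthal and smash-product connectivity estimates explicit. The off-by-one constants in your connectivity bounds are harmless, since all that matters is that the connectivity grows like $(n+1)k$ while the degree grows like $p+nk$.
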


\begin{proof}
This follows from verifying that the following map is an isomorphism
\[
\colim_{k \in \mathbb{Z}} \pi_{p+nk}  (\Sigma^n \Omega^n \wcal_n (A,S^k) )
\lra
\colim_{k \in \mathbb{Z}} \pi_{p+nk} (\wcal_n (A,S^k) ).
\]
This is simply an $n$-fold version of the
$\pi_*$-isomorphism $\Sigma \Omega X \ra X$ for $X$ a spectrum.
\end{proof}

\subsection{The stable model structure}\label{subsec:fugstable}

The stable model structure on $\devcat$ is
the left Bousfield localisation
of the projective model structure at the set of maps
\begin{equation}\label{eq:lambda}
\lambda_{A,n} \co
\wcal_n(A \smashprod S^1, -) \smashprod S^n
\longrightarrow
\wcal_n(A , -)
\end{equation}
where $S^n$, viewed as $S^1 \smsh \cdots \smsh S^1$, and $\wcal_n(A,-)$ have the $\Sigma_n$-action which permutes factors.
Smash products are equipped with the diagonal action.

\begin{proposition}\label{prop:devcatmodel}
The category $\devcat$ has a stable and proper model structure with cofibrations the projective cofibrations and whose weak equivalences are the $n\pi_*$-isomorphisms (of Definition \ref{def:npistardevcat}). This model structure is denoted $\devcat_{\stable}$.

Analogous to \cite[Proposition 9.5]{mmss01}, the
fibrations are objectwise fibrations
such that the square below is a homotopy pullback.
\[
\xymatrix{
F(A) \ar[r] \ar[d] &
\Omega^n F(A \smashprod S^1)
\ar[d] \\
G(A) \ar[r]  &
\Omega^n G(A \smashprod S^1)  \\
}
\]

The fibrant objects
are the those $F$ such that the maps
$F(A) \to \Omega^n F(A \smashprod S^1)$
are weak homotopy equivalences for all $A \in \wcal$.
An $n \pi_*$-isomorphism between fibrant objects is an objectwise weak
equivalence.

The generating cofibrations are as in Theorem \ref{thm:devcatprojmod}; the generating acyclic cofibrations differ by including
the maps of following form, constructed from equation (\ref{eq:lambda})
by taking mapping cylinders and taking the pushout product with maps of the form
$(\Sigma_n)_+ \smashprod i$ for $i \in I_{\Top}$.
\[
\{ (\Sigma_n)_+ \smashprod i) \Box 
\left( \wcal_n(A \smashprod S^1, -) \smashprod S^n
\longrightarrow M(\lambda_{A,n}) \right)
\ \mid \ A \in \skel \wcal_n, \ i \in I_{\Top}
\}
\]
The homotopy category of $\devcat_{\stable}$ is generated by the object
$(\Sigma_n)_+ \smashprod \wcal_n(S^0,-)$.
\end{proposition}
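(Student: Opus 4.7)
The plan is to construct $\devcat_{\stable}$ as the left Bousfield localization of $\devcat_{\proj}$ (Theorem \ref{thm:devcatprojmod}) at the set of maps $\{\lambda_{A,n} \mid A \in \skel \wcal\}$. Since $\devcat_{\proj}$ is cofibrantly generated, topological and proper, Hirschhorn's existence theorem for left Bousfield localization applies and produces a cofibrantly generated left proper model structure with the same cofibrations. The explicit generating acyclic cofibrations arise by factoring each $\lambda_{A,n}$ as a cofibration into its mapping cylinder followed by a deformation retraction, and then taking pushout products with the generating cofibrations $(\Sigma_n)_+ \smashprod i$ for $i \in I_{\Top}$, yielding the set described in the statement.

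To identify fibrant objects, I use the enriched Yoneda lemma: for projectively fibrant $F$, the space $\nat(\wcal_n(A, -), F)$ is $F(A)$ and $\nat(\wcal_n(A \smashprod S^1, -) \smashprod S^n, F)$ is $\Omega^n F(A \smashprod S^1)$. Hence locality with respect to $\lambda_{A,n}$ is the condition that $F(A) \to \Omega^n F(A \smashprod S^1)$ is a weak homotopy equivalence for each $A \in \wcal$. The homotopy pullback characterization of general fibrations follows from the standard argument comparing a fibration with the pullback of the fibrant replacement of its target, as in \cite[Proposition 9.5]{mmss01}.

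The main obstacle is identifying the local weak equivalences with the $n\pi_*$-isomorphisms. Corollary \ref{cor:lambdas} already shows that each $\lambda_{A,n}$ is an $n\pi_*$-isomorphism, and since $n\pi_*$-isomorphisms contain all objectwise weak equivalences and satisfy two-out-of-three, they contain every local weak equivalence. For the converse, the key step is to prove that an $n\pi_*$-isomorphism between fibrant objects is an objectwise weak equivalence: on fibrant $F$, the colimit defining $n\pi_p^A(F)$ stabilizes because each connecting map $F(A \smashprod S^k) \to \Omega^n F(A \smashprod S^{k+1})$ is already a weak equivalence, so an $n\pi_*^A$-isomorphism forces $\pi_* f(A)$ to be an isomorphism. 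The general case then follows by comparing any $n\pi_*$-isomorphism with its fibrant replacement, using that the generating acyclic cofibrations produce fibrant replacements through $n\pi_*$-isomorphisms. This argument closely parallels \cite[Theorem 9.2 and Proposition 17.6]{mmss01}.

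Right properness reduces, via the long exact sequence of $n$-homotopy groups attached to homotopy fibres, to the statement that $n\pi_*$-isomorphisms are preserved by pullback along the fibrations characterized above. Stability follows from the fibrancy condition: on a fibrant $F$, the structure map $F(A) \to \Omega^n F(A \smashprod S^1)$ is a weak equivalence, which after routine manipulation shows that suspension is invertible on the homotopy category (compare \cite[Theorem 6.11]{BRgoodwillie}). For the final claim, every cofibrant object is built from cells of the form $(\Sigma_n)_+ \smashprod \wcal_n(A, -) \smashprod S^k_+$, and after stabilization the $\lambda_{A,n}$ force $\wcal_n(A, -)$ to become equivalent to suitable shifts of $\wcal_n(S^0, -)$, so the object $(\Sigma_n)_+ \smashprod \wcal_n(S^0, -)$ generates the homotopy category as a triangulated category.
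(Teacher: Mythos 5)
Your proposal is correct and follows essentially the same route as the paper, whose proof simply cites the localisation arguments of Mandell et al.\ (Section 9) and Barnes--Oman (Section 7) together with Corollary \ref{cor:lambdas}; you have filled in exactly those standard steps (localising the projective structure at the $\lambda_{A,n}$, identifying local objects by Yoneda, and showing an $n\pi_*$-isomorphism of fibrant objects is an objectwise equivalence because the colimit stabilises). The only cosmetic difference is the generator claim, which the paper deduces directly from the isomorphism $n\pi_*(F) \cong [(\Sigma_n)_+ \smashprod \wcal_n(S^0,-), F]_*$ rather than by cell induction.
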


\begin{proof}
This follows by the same arguments used in both
Barnes and Oman \cite[Section 7]{barnesoman13} and
Mandell et al.\ \cite[Section 9]{mmss01},
together with Corollary \ref{cor:lambdas}
(the weak equivalences are the
$n\pi_*$-isomorphisms).
The statement about generators for the homotopy category
(see Schwede and Shipley \cite[Definition 2.1.2]{ss03stabmodcat})
follows from the isomorphism
$n\pi_* (F) \cong [(\Sigma_n)_+ \smashprod \wcal_n(S^0,-), F]_*$
and Proposition \ref{prop:pistar}.
\end{proof}

%
\section{Equivalence of the two versions of spectra}\label{sec:compare}
%

We now provide an adjunction between $\devcat$ and
$\Sigma_n \circlearrowleft\wcal \Top$, then show that it is a Quillen equivalence when both categories are equipped with their stable model structures.
\[
\xymatrix@C+1.7cm{
\devcat_{\stable}
\ar@<+1ex>[r]^{\wcal \smashprod_{\wcal_n} -} &
\Sigma_n \circlearrowleft \wcal \Sp
\ar@<+1ex>[l]^{\mu_n^*}
}
\]
We start by defining the right adjoint.

%
\subsection{The adjunction \texorpdfstring{between $\devcat$ and $\Sigma_n \lca \wcal\Sp$}{} }
%

\begin{definition}\label{def:mufunctor}
We define a $\Top$-enriched functor $\mu_n : \wcal_n \ra \wcal$. 
It sends the object $X$ to  $X^{\smsh n}$ and on morphisms acts as the smash product.
It is the adjoint to $n$-fold evaluation:
\[
\wcal_n (X,Y) \smashprod X^{\smsh n} \lra Y^{\smsh n}.
\]
\end{definition}

This map of enriched categories $\mu_n$ induces a functor
$\mu_n^*$, which is (almost)  pre-composition with $\mu_n$.

Let $F$ be an object of $\Sigma_n\circlearrowleft \wcal \Top$.
Then we define $(\mu_n^\ast F)(X) = F(X^{\smsh n})$, but with an
altered action of $\Sigma_n$.
The space $F(X^{\smsh n})$ has an action of $\Sigma_n$
by virtue of $F$ being a functor to $\Sigma_n$-spaces.
We denote this action by $\sigma \mapsto \sigma^F(X^{\smsh n})$
and refer to it as the \textbf{external action}.
The space $X^{\smsh n}$ also has an action of $\Sigma_n$, denoted
$\sigma_X$ for $\sigma \in \Sigma_n$. We thus have a second action on
$F(X^{\smsh n})$, the \textbf{internal action}.
We combine these and define the action on
$(\mu_n^\ast F)(X)$ to be %
$\sigma \in \Sigma_n \mapsto \sigma^F(X) \cdot F(\sigma_X).$
Note that the internal and external actions commute.

We complete our definition of $\mu_n^* F$ by giving its structure map below,
where $\nu^F$ is the structure map of $F$.
\vskip-0.5cm
\[
\xymatrix@C+1cm{
\wcal_n(X,Y) \smashprod F(X^{\smsh n})
\ar[r]^-{(\mu_n)_{X,Y} \smashprod \id} &
\wcal(X^{\smsh n},Y^{\smsh n}) \smashprod F(X^{\smsh n})
\ar[r]^-{\nu^F_{X^{\smsh n},Y^{\smsh n}}} &
F(Y^{\smsh n}).
}
\]
We must now show that this map is $\Sigma_n$-equivariant
using the altered action on $F(X^{\smsh n})$ and $F(Y^{\smsh n})$
and the permutation action on $\wcal_n(X,Y)$.
The action on $\wcal(X^{\smsh n},Y^{\smsh n})$ is via conjugation:
$f \mapsto \sigma_Y \circ f \circ \sigma_X^{-1}$.
The first map is clearly $\Sigma_n$-equivariant.
For the second map, we look at the actions separately.
By naturality of $\nu^F$, the second map is equivariant with respect
to the internal actions on $F(X^{\smsh n})$ and $F(Y^{\smsh n})$
and the action on $\wcal(X^{\smsh n},Y^{\smsh n})$.
It is also equivariant with respect to the external actions
on $F(X^{\smsh n})$ and $F(Y^{\smsh n})$ (with no action on
$\wcal(X^{\smsh n},Y^{\smsh n})$). Composing the two actions gives the result.

\begin{remark}\label{rmk:equivariance}
We compare the different versions of equivariance for
$\devcat$ and $\Sigma_n\circlearrowleft\Top$.
Consider some $F: \wcal \ra \Sigma_n\circlearrowleft\Top$. Then $F(A) \in \Sigma_n \circlearrowleft \Top$ and for a map $f \in \wcal (A,B)$, the map $F(f): F(A) \ra F(B)$ is $\Sigma_n$-equivariant.
That is, $F$ induces a map
\[F_{A,B}: \wcal (A,B)\ra  \Top (F(A), F(B))^{\Sigma_n}.\]
In contrast, for $G \in \devcat$,
the following is a $\Sigma_n$-equivariant map.
\[
G_{A,B}:  \wcal (A,B)^{\smsh n} \ra  \Top (G(A), G(B))
\]
The functor $\mu_n^*$ allows us to compare these two types
of equivariance. Indeed, the altered action on $(\mu_n^* F)(X)$
is designed precisely to take account of
the non-trivial $\Sigma_n$-action on $W_n(X,Y)$.
\end{remark}

The left adjoint $\wcal \smashprod_{\wcal_n} -$ takes an object $F$ of $\devcat$ to the coend
\[
\int^{A \in \wcal_n} F(A) \smashprod \wcal(A^{\smsh n},-).
\]
The term $\wcal(A^{\smsh n},-)$ has an action of $\Sigma_n$
by permuting the factors of $A^{\smsh n}$.
Establishing the adjunction is a formal exercise in manipulating ends and coends.

%
\subsection{The Quillen equivalence}\label{subsec:gooddump}
%

In this section we prove that the adjunction we have established is a Quillen equivalence.
\[
\xymatrix@C+1.7cm{
\devcat_{\stable}
\ar@<+1ex>[r]^{\wcal \smashprod_{\wcal_n} -} &
\Sigma_n \circlearrowleft \wcal \Sp
\ar@<+1ex>[l]^{\mu_n^*}
}
\]

\begin{lemma}
The adjoint pair $(\wcal \smashprod_{\wcal_n} -, \mu_n^*)$
is a Quillen pair with respect to the stable model structures.
\end{lemma}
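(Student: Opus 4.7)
The plan is to verify the Quillen pair property by checking that the left adjoint $\wcal \smashprod_{\wcal_n} -$ sends generating cofibrations and generating acyclic cofibrations of $\devcat_{\stable}$ to (acyclic) cofibrations in $\Sigma_n \lca \wcal \Sp$. Both model structures are left Bousfield localisations of projective model structures, so I will handle the projective-level comparison first and then address the localising maps separately.

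For the projective level, the coend formula and the Yoneda lemma give
\[
\wcal \smashprod_{\wcal_n} \wcal_n(A,-) \cong \wcal(A^{\smsh n},-),
\]
where the $\Sigma_n$-action on the right is by permutation of the $n$ smash factors of $A^{\smsh n}$. Since the left adjoint commutes with smash products with based $\Sigma_n$-spaces, the generators $I_{\wcal_n}$ and $J_{\wcal_n}$ of Theorem \ref{thm:devcatprojmod} are sent to maps of the form $\wcal(A^{\smsh n},-) \smsh (\Sigma_n)_+ \smsh i$ (and similarly for $J_{\Top}$), which are projective cofibrations (respectively projective acyclic cofibrations) in $\Sigma_n \lca \wcal \Top$. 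Hence the adjunction is Quillen at the projective level.

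For the stable level, by Proposition \ref{prop:devcatmodel} it remains to check that $\wcal \smashprod_{\wcal_n} -$ sends the mapping cylinder replacements of the localising maps $\lambda_{A,n}$ (smashed with the boundary inclusions $(\Sigma_n)_+ \smashprod i$) to stable equivalences. Since the left adjoint preserves mapping cylinders and pushout products, it suffices to show that
\[
\wcal \smashprod_{\wcal_n} \lambda_{A,n} \co \wcal(A^{\smsh n} \smsh S^n, -) \smsh S^n \lra \wcal(A^{\smsh n}, -)
\]
is a $\pi_*$-isomorphism in $\Sigma_n \lca \wcal \Sp$, where I have used $(A \smsh S^1)^{\smsh n} \cong A^{\smsh n} \smsh S^n$. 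I would factor this map as a composition of $n$ one-suspension-at-a-time maps
\[
\wcal(A^{\smsh n} \smsh S^k, -) \smsh S^k \lra \wcal(A^{\smsh n} \smsh S^{k-1}, -) \smsh S^{k-1},
\]
each of which is, after forgetting the $\Sigma_n$-action, an instance of the stabilising maps in the localising set of Lemma \ref{lem:stabmaps} (smashed with a sphere, which preserves stable equivalences).

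The main obstacle is the $\Sigma_n$-equivariance: the factorisation above must be compatible with the diagonal action of $\Sigma_n$ on $A^{\smsh n} \smsh S^n$ and on $S^n$, and the intermediate objects carry a non-product action. However, by Definition \ref{def:piiso}, being a $\pi_*$-isomorphism in $\Sigma_n \lca \wcal \Sp$ is detected on underlying (non-equivariant) spectra, so once each intermediate map is seen to be $\Sigma_n$-equivariant (which is clear from naturality), the stable-equivalence property follows from the non-equivariant statement together with closure of $\pi_*$-isomorphisms under composition. A brief check that the forgetful functor from $\Sigma_n \lca \wcal \Sp$ to $\wcal \Sp$ detects stable equivalences completes the argument.
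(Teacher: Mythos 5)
Your proposal is correct and follows essentially the same route as the paper: check the generating (acyclic) cofibrations at the projective level via $\wcal \smashprod_{\wcal_n} \wcal_n(A,-) \cong \wcal(A^{\smsh n},-)$, then verify that the localising maps $\lambda_{A,n}$ are sent to $\pi_*$-isomorphisms. Your extra detail (factoring the image of $\lambda_{A,n}$ into one-suspension-at-a-time maps and noting that equivalences in $\Sigma_n \circlearrowleft \wcal\Sp$ are detected on underlying spectra) merely fills in what the paper asserts without proof; note only that the intermediate maps in your factorisation need not themselves be equivariant, which is harmless precisely because the weak equivalences are underlying ones.
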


\begin{proof}
A generating cofibration of $\devcat$ is of the form
$\wcal_n(A,-) \smashprod (\Sigma_n)_+ \smashprod i$,
for $i$ a cofibration of based spaces.
The left adjoint sends this map
to $\wcal(A^{\smsh n}, -)\smashprod (\Sigma_n)_+ \smashprod i$,
which is a cofibration of $\Sigma_n \circlearrowleft \wcal \Sp$.
Similarly, it sends the generating acyclic cofibrations
of the projective model structure on $\devcat$ to
acyclic cofibrations of
$\Sigma_n \circlearrowleft \wcal \Sp$.

The stable model structure on $\devcat$
comes from taking the projective model structure
and localising at the maps
\[
\wcal_n(A \smashprod S^1, -)
\smashprod
S^n \to \wcal_n(A,-)
\]
The left adjoint will take a map of the form above to the $\pi_*$-isomorphism
\[
\wcal(A^{\smsh n} \smashprod S^n, -)
\smashprod S^n \to
\wcal(A^{\smsh n},-).
\]
It follows that the left adjoint is a left Quillen functor.
\end{proof}

\begin{proposition}\label{prop:phiQE}
The adjoint pair $(\wcal \smashprod_{\wcal_n} -, \mu_n^*)$
is a Quillen equivalence.
\end{proposition}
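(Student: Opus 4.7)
The plan is to exploit that $\ho(\devcat_{\stable})$ is compactly generated by the single object $G = (\Sigma_n)_+ \smashprod \wcal_n(S^0, -)$ (Proposition \ref{prop:devcatmodel}), and to verify the two conditions standard for a Quillen equivalence between stable model categories with compact generators: (i) the right adjoint $\mu_n^*$ detects weak equivalences, and (ii) the derived unit at $G$ is a weak equivalence.

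First I would establish (i) by a direct computation from Definition \ref{def:npistardevcat}. For any $F \in \Sigma_n \circlearrowleft \wcal \Sp$, unfolding $(\mu_n^* F)(S^k) = F((S^k)^{\smashprod n}) = F(S^{nk})$ gives
\[
n\pi_p(\mu_n^* F) = \colim_{k} \pi_{p+nk}\bigl(F(S^{nk})\bigr).
\]
The subsequence $\{nk\}$ is cofinal, so this colimit is canonically isomorphic to $\pi_p(F[S^0])$. Hence $\mu_n^*$ sends $\pi_*$-isomorphisms of $\Sigma_n \circlearrowleft \wcal \Sp$ to $n\pi_*$-isomorphisms of $\devcat_{\stable}$ and also reflects them. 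In particular $\mu_n^*$ preserves all weak equivalences, so its total right derived functor agrees with $\mu_n^*$ on homotopy categories.

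For (ii), the enriched coend-Yoneda lemma applied along $\mu_n \co \wcal_n \to \wcal$ gives
\[
\wcal \smashprod_{\wcal_n} G \cong (\Sigma_n)_+ \smashprod \wcal(S^0, -),
\]
and the ordinary unit $G \to \mu_n^*\bigl((\Sigma_n)_+ \smashprod \wcal(S^0, -)\bigr)$ is the natural identification $(\Sigma_n)_+ \smashprod A^{\smashprod n} \xrightarrow{\cong} (\Sigma_n)_+ \smashprod \wcal(S^0, A^{\smashprod n})$, so is an isomorphism. Since $G$ is cofibrant, the derived unit at $G$ is this isomorphism composed with $\mu_n^*$ of a stable fibrant replacement $(\Sigma_n)_+ \smashprod \wcal(S^0, -) \to R$; the replacement is a $\pi_*$-isomorphism, so by (i) its image under $\mu_n^*$ is an $n\pi_*$-isomorphism, and the derived unit at $G$ is a weak equivalence.

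To conclude I would invoke a triangulated-generator argument as in Schwede-Shipley \cite{ss03stabmodcat}. The derived functors $\mathbb{L}(\wcal \smashprod_{\wcal_n} -)$ and $\mu_n^*$ are triangulated and coproduct-preserving (the left derived always is, and $\mu_n^*$ preserves wedge sums objectwise), so the full subcategory of $\ho(\devcat_{\stable})$ on which the derived unit is an isomorphism is a localising subcategory; by (ii) it contains $G$, and hence by compact generation equals all of $\ho(\devcat_{\stable})$. Combined with the reflection in (i), a triangle-identity argument then upgrades this to an equivalence of triangulated categories, and therefore gives a Quillen equivalence. The main technical obstacle is the cofinality identification in (i) together with careful bookkeeping of the internal versus external $\Sigma_n$-actions on $(\mu_n^* F)(X)$ described in Remark \ref{rmk:equivariance}; once these formal identifications are handled, the rest of the argument assembles quickly from results already in place.
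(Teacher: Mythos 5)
Your proposal is correct and follows essentially the same route as the paper's proof: establish via the cofinality computation $n\pi_p^{S^0}(\mu_n^*F)\cong\colim_k\pi_{p+nk}F(S^{nk})\cong\pi_p(F[S^0])$ that $\mu_n^*$ preserves (and reflects) all weak equivalences, then check the unit on the compact generator $(\Sigma_n)_+\smashprod\wcal_n(S^0,-)$, where it is an isomorphism, and conclude by the generator/stability argument. The paper compresses the localising-subcategory step into the phrase ``by stability, it suffices to check this in the case of the single generator,'' which you have merely spelled out in more detail.
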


\begin{proof}
We claim that the right adjoint preserves all weak equivalences.
A map $f$ is a weak equivalence of
$\Sigma_n \circlearrowleft \wcal \Sp$
if and only if
$f[S^0]$ is a $\pi_*$-isomorphism of spectra by Definition \ref{def:piiso}.
Similarly a map $g$ is a weak equivalence of
$\devcat$ if and only if it is an $n\pi_\ast$-isomorphism,
by Proposition \ref{prop:devcatmodel}. By Proposition \ref{prop:pistar}, $g$ is an $n\pi_\ast$-iso if and only if $n\pi_\ast^{S^0}(g)\co n\pi_\ast^{S^0}(F) \ra n\pi_\ast^{S^0}(G)$ is an isomorphism.

Consider $\mu_n^* F$ for some object $F$ in
$\Sigma_n \circlearrowleft \wcal \Sp$.
It is routine to check that
\[
n\pi^{S^0}_p (\mu_n^\ast F) = \colim_{k \in\mathbb{Z}} \pi_{p+nk} F(S^{nk}).
\]
By cofinality of the terms $p+nk$ in $\mathbb{Z}$, if follows that $\mu_n^\ast f$ is an $n \pi_*$-isomorphism whenever $f[S^0]$ is a $\pi_*$-isomorphism.
Hence we have shown our claim that the right adjoint preserves all weak equivalences.

By \cite[Corollary 1.3.16]{hov99}, we must now show that for cofibrant $F \in \devcat$, the derived unit map of the adjunction is a weak equivalence. Since  the right adjoint preserves all weak equivalences (in particular, that between an object and its fibrant replacement), it is enough to consider the unit map
\[
F \longrightarrow \mu_n^* \wcal \smashprod_{\wcal_n} F.
\]
By stability, it suffices to check this in the case of the single generator of
the homotopy category of $\devcat$.
Replacing $F$ by this generator and simplifying,
we are left with the map below, which is induced by $\mu_n$.
\[
(\Sigma_n)_+ \smashprod  \wcal(S^0, -)^{\smashprod n}
\longrightarrow
(\Sigma_n)_+ \smashprod  \wcal(\mu_n(S^0),\mu_n(-))
=
(\Sigma_n)_+ \smashprod  \wcal(S^0,(-)^{\smashprod n})
\]
This map is an isomorphism, hence it is a weak equivalence as desired.
\end{proof}

%
\section{Differentiation is a Quillen equivalence}\label{sec:diffquillen}
%

In this section we define differentiation
as an adjunction between the homogeneous model structure on
$\wcal \Top$ and the stable model structure on $\devcat$.
We then show that it is a Quillen equivalence.
Thus we will have a diagram of Quillen equivalences as below,
showing that ${\wcal \Top_{\nhomog}}$ is Quillen equivalent to
spectra with a $\Sigma_n$--action. Finally we will show that this
diagram captures precisely Goodwillie's classification theorem.
\[
\xymatrix@C+1.5cm@R+1.2cm{
{\wcal \Top_{\nhomog}}
\ar@<-3pt>[r]_-{\diff_n}
&
\devcat_{\stable}
\ar@<3pt>[r]^-{\wcal \smashprod_{\wcal_n} -}
\ar@<-3pt>[l]_-{(-)/\Sigma_n \circ \mapdiag^\ast}
&
\ar@<3pt>[l]^-{\mu_n^\ast}
\Sigma_n\circlearrowleft \wcal Sp
\\
}
\]

\subsection{The adjunction \texorpdfstring{between $\devcat$ and $\wcal \Top_{\nhomog}$}{} }\label{subsec:diffnquillen}

Recall Definition \ref{def:cref} where we define the $n^{th}$-cross effect. The
$n^{th}$-derivative of $F$ is
\[
\diff_n (F)(X) = \Nat (\bigsmashprod{l=1}{n} \wcal(X,-) , F)
\]
which is cross effect pre-composed with the diagonal, which we originally considered as an object of $\wcal\Top$.
The category $\devcat$ is the most natural
target for the functor $\diff_n,$
as its representable functors are of the form
$\wcal_n (X, -)=\bigsmashprod{l=1}{n} \wcal(X,Y)$.
\begin{definition}
We define the $n^{th}$ \textbf{(stable) derivative} of $F \in \wcal \Top$,
to be the functor $\diff_n (F)$ in $\devcat$.
The structure map below is induced from the composition of $\wcal_n$
and is $\Sigma_n$-equivariant
\[
\wcal_n(X,Y) \smashprod
\Nat (\wcal_n(X,-) , F) \lra
\Nat (\wcal_n(X,-) , F).
\]
\end{definition}
 
\begin{proposition}\label{prop:diffnleft}
The functor $\diff_n$ has a left adjoint:
\[
{(-)/\Sigma_n \circ \mapdiag^\ast} \co \devcat
\longrightarrow
\wcal \Top
\]
which we define in the proof below.
\end{proposition}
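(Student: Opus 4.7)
The plan is to describe the left adjoint explicitly as the composition suggested by the notation, verify the adjunction isomorphism on representables via enriched Yoneda, and then extend to arbitrary $G\in\devcat$ using the fact that every enriched functor is a coend of representables and both functors in the proposed adjunction commute with the relevant (co)limits.

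First I would define the enriched functor $\mapdiag \co \wcal \to \wcal_n$. It is the identity on objects and, on hom objects, is the $n$-fold diagonal
\[
\Delta \co \wcal(X,Y) \longrightarrow \wcal(X,Y)^{\smsh n} = \wcal_n(X,Y).
\]
Viewing $\wcal$ as enriched over $\Sigma_n \lca \Top$ via the trivial $\Sigma_n$-action, $\mapdiag$ is a $\Sigma_n \lca \Top$-enriched functor, since $\Delta$ lands in the $\Sigma_n$-fixed points of the permutation action on $\wcal(X,Y)^{\smsh n}$. Then $\mapdiag^*$ is simply precomposition with $\mapdiag$, sending $G\in\devcat$ to the $\Sigma_n\lca\Top$-valued functor on $\wcal$ whose value at $X$ is $G(X)$ as a $\Sigma_n$-space but whose $\wcal$-functoriality uses the structure map
\[
\wcal(X,Y)\smsh G(X) \xrightarrow{\Delta \smsh \id} \wcal_n(X,Y)\smsh G(X) \longrightarrow G(Y).
\]
Taking $\Sigma_n$-coinvariants objectwise yields $L(G)(X) := G(X)/\Sigma_n$, which assembles into an object of $\wcal\Top$ since the structure map above is $\Sigma_n$-equivariant (diagonal action on the source, action on $G(Y)$ on the target) and hence descends to the quotients.

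To verify the adjunction, I would first check it on the representables $\wcal_n(A,-) \in \devcat$. By enriched Yoneda in $\devcat$,
\[
\devcat(\wcal_n(A,-),\diff_n F) \;=\; \bigl[\diff_n F(A)\bigr]^{\Sigma_n} \;=\; \Nat\bigl(\wcal_n(A,-),F\bigr)^{\Sigma_n},
\]
since $\Sigma_n$-equivariant natural transformations out of the representable $\wcal_n(A,-)$ correspond to $\Sigma_n$-fixed points of $\diff_n F(A)$. On the other hand, $L(\wcal_n(A,-))(Y) = \wcal_n(A,Y)/\Sigma_n$, and natural transformations out of a quotient are exactly the $\Sigma_n$-invariant natural transformations from $\wcal_n(A,-)$ (with the permutation action), so
\[
\wcal\Top\bigl(L(\wcal_n(A,-)),F\bigr) \;=\; \Nat\bigl(\wcal_n(A,-),F\bigr)^{\Sigma_n},
\]
matching. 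To extend to an arbitrary $G\in\devcat$, I would use the enriched co-Yoneda identity $G \cong \int^{A\in\wcal_n} G(A)\smsh_{\Sigma_n}\wcal_n(A,-)$, note that $L$ visibly preserves this coend (as $(-)/\Sigma_n \circ \mapdiag^*$ is a composition of a precomposition and a colimit, hence cocontinuous), and combine with the representable case.

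The main obstacle is the bookkeeping of the various $\Sigma_n$-actions: the permutation action on $\wcal_n(X,Y)$, the diagonal action on smash products $\wcal_n(X,Y)\smsh G(X)$, the conjugation action on $\Top$-mapping spaces, and the induced action on $\Nat(\wcal_n(X,-),F)$. One must check that the $\Sigma_n$-invariants/coinvariants in the adjunction computation genuinely match across these actions, and that $L(G)$ truly lies in $\wcal\Top$ and not merely in $\Sigma_n \lca \wcal\Top$. Once this careful verification is done, naturality in $F$ and $G$ is automatic from the construction, producing the required adjunction $L \dashv \diff_n$.
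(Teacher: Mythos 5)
Your construction of the left adjoint --- $\mapdiag$ as the identity-on-objects, diagonal-on-morphisms enriched functor (landing in the $\Sigma_n$-fixed points of $\wcal_n(X,Y)$), followed by objectwise $\Sigma_n$-coinvariants, with the structure maps descending to the quotient because $E(X)\smsh\wcal(X,Y)\to E(Y)$ is equivariant for the trivial action on $\wcal(X,Y)$ --- is exactly the paper's, and your verification of adjointness (enriched Yoneda on the representables $\wcal_n(A,-)$ plus co-Yoneda density, extended in the obvious way to tensors $K\smsh\wcal_n(A,-)$ with $\Sigma_n$-spaces $K$) is just an unwound form of the end manipulation $\int_{X\in\wcal}\Top(E(X)/\Sigma_n,F(X))\cong\int_{Y\in\wcal_n}\Sigma_n\Top(E(Y),\diff_n(F)(Y))$ that the paper leaves as a ``gentle exercise''. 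This is essentially the same approach and it is correct.
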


\begin{proof}
We begin by defining the $\Top$-enriched functor
$\mapdiag: \wcal \ra \wcal_n$. It is the identity on
objects and the diagonal on morphisms:
\[
f \in \wcal (A, B) \mapsto [(f, \ldots, f)] \in  \wcal(A,B)^{\smsh n} = \wcal_n (A,B).
\]
In particular, $\mapdiag$ lands in the $\Sigma_n$-fixed points of
$\wcal_n(A,B)$.
Let $E \in \devcat$, then for $X \in \wcal$, $E(X)$ is a space
with an action of $\Sigma_n$. We use a shorthand
\[
E(X)/\Sigma_n: = ({(-)/\Sigma_n \circ \mapdiag^\ast}(E))(X)
\]
We must also describe the structure maps of
$E(-)/\Sigma_n \in \wcal \Top$.
Consider the composite
\[
E(X) \smashprod \wcal(X,Y)
\lra
E(X) \smashprod \wcal_n(X,Y)
\lra
E(Y)
\]
where the first map is $\id \smashprod \mapdiag$ as defined above and the second is
the structure map of $E \in \devcat$. If we equip $\wcal(X,Y)$ with the trivial action,
then this composite is $\Sigma_n$-equivariant. Hence, we can
apply $(-)/\Sigma_n$ to this map, the result of which is
the structure map of $E(-)/\Sigma_n \in \wcal \Top$.

A gentle exercise in category theory shows that we have an adjunction:
\[
\begin{array}{rcl}
\wcal \Top({E/\Sigma_n \circ \mapdiag}, F)
&=&
\int_{X \in \wcal} \Top (E(X)/\Sigma_n, F(X))  \\
 &\cong&
\int_{Y \in \wcal_n} \Sigma_n \Top \left(
E(Y) , \diff_n (F)(Y) \right) \\
&=&
\devcat \left(E , \diff_n (F) \right).
\end{array}
\]
\end{proof}

\begin{lemma}\label{lem:adjuntop}
The adjunction $((-)/\Sigma_n \circ \mapdiag^*,\diff_n)$ is $\Top$-enriched.
\end{lemma}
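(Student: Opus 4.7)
The plan is to verify that the natural set-theoretic bijection established in the proof of Proposition \ref{prop:diffnleft} is in fact a homeomorphism of topological spaces. Both sides of that adjunction are enriched ends in $\Top$, namely
\[
\wcal \Top(E/\Sigma_n \circ \mapdiag^\ast, F) = \int_{X \in \wcal} \Top(E(X)/\Sigma_n, F(X))
\]
and
\[
\devcat(E, \diff_n F) = \int_{Y \in \wcal_n} \Sigma_n \Top(E(Y), (\diff_n F)(Y)),
\]
where $\Sigma_n \Top(-,-)$ denotes the topological subspace of $\Sigma_n$--equivariant maps inside the conjugation-action mapping space. Hence both hom-objects carry canonical topologies, and it suffices to check that each step in the chain of identifications is continuous with continuous inverse.

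Following the manipulations in Proposition \ref{prop:diffnleft}, I would check two things. First, for any $\Sigma_n$-space $A$ and any space $B$ regarded as having trivial action, the universal property of the quotient topology gives a homeomorphism $\Top(A/\Sigma_n, B) \cong \Sigma_n \Top(A, B)$. Applying this inside the end yields a homeomorphism between $\int_{X \in \wcal} \Top(E(X)/\Sigma_n, F(X))$ and $\int_{X \in \wcal} \Sigma_n \Top(E(X), F(X))$, with $F$ regarded as carrying the trivial action. Second, this $\wcal$-indexed end is to be identified with the $\wcal_n$-indexed end defining $\devcat(E, \diff_n F)$ by an enriched Yoneda-style calculation that uses the continuous structure maps of $E$ as a $\wcal_n$-enriched functor together with the defining formula for $\diff_n F$.

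The main technical point is the continuity of this second comparison. Concretely, the bijection sends a family $\{\alpha_X \co E(X) \to F(X)\}_X$ to $\{\beta_Y\}_Y$ whose adjoint is the composite
\[
\wcal_n(Y, Z) \smashprod E(Y) \longrightarrow E(Z) \overset{\alpha_Z}{\longrightarrow} F(Z),
\]
where the first arrow is the structure map of $E$; its inverse uses the continuous diagonal unit $S^0 \to \wcal_n(X, X)$, whose image is a $\Sigma_n$-fixed point. Since all the pieces (structure maps of $E$, composition in $\wcal_n$, evaluation, and the unit) are continuous in the appropriate enriched sense, both the forward and inverse assignments are continuous maps of enriched ends, so the bijection is a homeomorphism. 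Naturality in $E$ and $F$ at the topological level follows by the same argument, so the adjunction $((-)/\Sigma_n \circ \mapdiag^\ast, \diff_n)$ is $\Top$-enriched.
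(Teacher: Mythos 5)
Your argument is correct, but it takes a genuinely different route from the paper. The paper's proof is a two-line application of the standard criterion for enriching an adjunction: it exhibits a natural isomorphism $\bigl(((-)/\Sigma_n \circ \mapdiag^*)(E)\bigr) \smashprod K \cong ((-)/\Sigma_n \circ \mapdiag^*)(E \smashprod K)$, coming from the fact that smashing with a trivial-action space $K$ commutes with taking $\Sigma_n$-orbits, and then concludes that since the left adjoint preserves tensors (equivalently, the right adjoint preserves cotensors), the adjunction is $\Top$-enriched. You instead topologise the hom-set bijection of Proposition \ref{prop:diffnleft} directly and check it is a natural homeomorphism of enriched ends, via the identification $\Top(A/\Sigma_n, B) \cong \Sigma_n\Top(A,B)$ and an explicit enriched-Yoneda comparison of the $\wcal$-indexed and $\wcal_n$-indexed ends. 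Both are valid. The paper's approach buys brevity and offloads all point-set verification onto general enriched category theory; yours is self-contained and makes the unit/counit of the enriched adjunction explicit, at the cost of having to justify several continuity claims --- in particular that precomposition with the quotient map $E(X) \to E(X)/\Sigma_n$ is a homeomorphism onto the subspace of invariant maps (standard for finite group quotients in the compactly generated setting) and that the end-level manipulations assemble from continuous pieces. Your sketch of these points is plausible and the key ingredients (structure maps, the $\Sigma_n$-fixed diagonal unit, equivariance of the resulting maps) are all correctly identified, so I regard the proposal as a correct alternative proof.
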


\begin{proof}
There is an isomorphism, natural in $E \in \devcat$ and
$K \in \Top$
\[
(({(-)/\Sigma_n \circ \mapdiag^*})(E)) \smashprod K
\to
({(-)/\Sigma_n \circ \mapdiag^*})(E \smashprod K))
\]
induced by the isomorphism
$E(X)/\Sigma_n \smashprod K \to (E(X)\smashprod K)/\Sigma_n$.
It follows that the right adjoint commutes with the cotensoring with $\Top$
and that the adjunction is enriched over topological spaces.
\end{proof}

With the language of parameterised spectra
(in the sense of May and Sigurdsson \cite{ms06})
we could extend our definitions of $\devcat$ and $\diff_n$
to capture derivatives of functors of spaces over $Y$. As is common in the modern literature, %
 we concentrate on the fundamental case of the derivative over a point.

\subsection{The Quillen equivalence}

\begin{proposition}\label{prop:diffquillen}
The adjunction $((-)/\Sigma_n \circ \mapdiag^*,\diff_n)$ is a
Quillen pair with respect to the following pairs of model structures.
\begin{enumerate}[noitemsep]
\item $\devcat_{\proj}$ and $\wcal \Top_{\cross}$.
\item $\devcat_{\proj}$ and $\wcal \Top_{\nexs}$.
\item $\devcat_{\stable}$ and $\wcal \Top_{\nexs}$.
\item $\devcat_{\stable}$ and $\wcal \Top_{\nhomog}$.
\end{enumerate}
\end{proposition}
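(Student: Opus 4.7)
The plan is to verify in each of the four parts that the left adjoint $L := (-)/\Sigma_n \circ \mapdiag^\ast$ sends generating cofibrations and generating acyclic cofibrations to (acyclic) cofibrations in the target. The key computational identity is that, since $\Sigma_n$ acts freely on $(\Sigma_n)_+$,
\[
L\bigl(\wcal_n(A,-) \smashprod (\Sigma_n)_+ \smashprod K\bigr) \;\cong\; \bigsmashprod{l=1}{n} \wcal(A,-) \smashprod K
\]
for any based space $K$, with the diagonal permutation $\Sigma_n$-action absorbed by the $(\Sigma_n)_+$ factor. Combined with Corollary \ref{cor:wncofibrations}, this settles (1): $L$ sends the generating (acyclic) cofibrations of $\devcat_{\proj}$ to cross effect (acyclic) cofibrations. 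Part (2) follows immediately, since $\wcal \Top_{\nexs}$ has the same cofibrations as $\wcal \Top_{\cross}$ and every objectwise weak equivalence is a $P_n$-equivalence.

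For (3), cofibration preservation is as in (2) since $\devcat_{\stable}$ shares cofibrations with $\devcat_{\proj}$. By Proposition \ref{prop:devcatmodel} the extra generating acyclic cofibrations of $\devcat_{\stable}$ are pushout products $((\Sigma_n)_+ \smashprod i) \Box k_{A,n}$, where $k_{A,n}$ is the cofibration factor of the mapping cylinder of $\lambda_{A,n}$. Since $L$ is cocontinuous and $\Top$-enriched by Lemma \ref{lem:adjuntop}, it commutes with pushout products, reducing the task to showing $L(\lambda_{A,n})$ is a $P_n$-equivalence in $\wcal \Top_{\nexs}$. Unwinding via the adjunction identification $\wcal(A \smashprod S^1, -) \cong \Omega \wcal(A,-)$, the map in question is
\[
\left(\bigsmashprod{l=1}{n} \bigl(\Omega \wcal(A,-) \smashprod S^1\bigr)\right)\!\Big/ \Sigma_n \;\longrightarrow\; \left(\bigsmashprod{l=1}{n} \wcal(A,-)\right)\!\Big/ \Sigma_n
\]
induced by the evaluation $\Omega \wcal(A,-) \smashprod S^1 \to \wcal(A,-)$.

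The main obstacle is verifying this is a $P_n$-equivalence. Both sides are $\Sigma_n$-quotients of diagonals of $n$-multilinear functors (each factor $\wcal(A,-)$ and $\Omega\wcal(A,-)\smashprod S^1$ is linear), so both are $n$-homogeneous; in particular $P_{n-1}$ is trivial on each. By Goodwillie's homogeneous classification \cite{goodcalc3}, a map between $n$-homogeneous functors is a $P_n$-equivalence if and only if the induced map on classifying spectra with $\Sigma_n$-action is a $\pi_\ast$-isomorphism. Here the induced spectrum-level map is, up to the $\Sigma_n$-quotient, the $n$-fold equivariant smash of the classical stabilization map $\Omega Y \smashprod S^1 \to Y$, which is a $\pi_\ast$-isomorphism of spectra. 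This gives (3).

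For (4), the cofibrations of $\wcal \Top_{\nhomog}$ are the cross effect cofibrations that are $P_{n-1}$-equivalences. Since $\bigsmashprod{l=1}{n} \wcal(A,-)$ is $n$-homogeneous (hence $P_{n-1}$-trivial) and $P_{n-1}$ commutes up to equivalence with smashing with any space, the image of any generating cofibration of $\devcat_{\stable}$ under $L$ is automatically a $P_{n-1}$-equivalence, giving cofibration preservation. For acyclic cofibrations, every $P_n$-equivalence is a $D_n$-equivalence (since $D_n$ is the fibre of $P_n \to P_{n-1}$) and hence a $\hodiff_n$-equivalence by Lemma \ref{lem:nhomogequivDn}, so the images of acyclic cofibrations from (3) remain acyclic cofibrations in $\wcal \Top_{\nhomog}$.
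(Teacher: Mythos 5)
Parts (1) and (2) of your argument coincide with the paper's: apply the left adjoint to the generating sets, use the isomorphism absorbing the free $(\Sigma_n)_+$ factor, and invoke Corollary \ref{cor:wncofibrations}. The genuine gap is in part (3), where you assert that $\bigsmashprod{l=1}{n}\wcal(A,-)$ and its looped/suspended variant are $n$-homogeneous because ``each factor $\wcal(A,-)$ is linear''. This is false: $\wcal(A,-)=\Top(A,-)$ is not $1$-excisive --- for $A=S^0$ it is the identity functor, whose linearisation is $\Omega^\infty\Sigma^\infty$, not itself. Hence these representables are not $n$-excisive, let alone $n$-homogeneous, and the appeal to the classification of homogeneous functors by spectra with $\Sigma_n$-action does not apply. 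What your reduction actually requires is that mapping out of the image of $\lambda_{A,n}$ into an arbitrary fibrant object $F$ of $\wcal\Top_{\nexs}$ (a cross-effect fibrant $n$-excisive functor) produces the map $(\diff_n F)(A)\to\Omega^n(\diff_n F)(A\smashprod S^1)$ and that this is a weak equivalence. That is precisely Goodwillie's theorem that the cross effect of an $n$-excisive functor is multilinear, \cite[Proposition 3.3]{goodcalc3}; the paper invokes it in the equivalent dual form, checking that $\diff_n$ carries fibrant objects of $\wcal\Top_{\nexs}$ to stably fibrant objects of $\devcat$. A formal homogeneity argument cannot substitute for this input. (A smaller slip in the same part: the new generating acyclic cofibrations of $\devcat_{\stable}$ carry a free $(\Sigma_n)_+$ factor, so after applying $(-)/\Sigma_n\circ\mapdiag^*$ no $\Sigma_n$-quotient survives; the relevant map is $\bigsmashprod{l=1}{n}\wcal(A\smashprod S^1,-)\smashprod S^n\to\bigsmashprod{l=1}{n}\wcal(A,-)$, not its quotient.)

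The same false premise appears in part (4): $\bigsmashprod{l=1}{n}\wcal(A,-)$ is not $n$-homogeneous. The weaker statement you need --- that smashing it with a cofibration of spaces yields a $P_{n-1}$-equivalence --- is true, but for a different reason: for any fibrant $(n-1)$-excisive $G$ one has $\Nat(K\smashprod\bigsmashprod{l=1}{n}\wcal(A,-),G)\cong\Top(K,(\diff_n G)(A))\simeq\ast$, since the $n$-th (homotopy) cross effect of an $(n-1)$-excisive functor vanishes; alternatively, observe that the images $S^k_+\smashprod\bigsmashprod{l=1}{n}\wcal(A,-)\to D^k_+\smashprod\bigsmashprod{l=1}{n}\wcal(A,-)$ are literally among the generating cofibrations of $\wcal\Top_{\nhomog}$ listed in the paper, which is essentially how the paper concludes (via Hirschhorn's results on right Bousfield localisation). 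Your handling of the acyclic cofibrations in (4) is correct, though it is simpler to note that a right Bousfield localisation leaves the fibrations, and hence the class of acyclic cofibrations, unchanged.
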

%
\begin{proof}
A generating (acyclic) cofibration of the projective model structure on
$\devcat$ has the form
$\wcal_n (A,-) \smashprod (\Sigma_n)_+ \smashprod i$,
where $i$ is
a generating (acyclic) cofibration for based spaces.
By Lemma \ref{lem:adjuntop} the functor ${(-)/\Sigma_n \circ \mapdiag^*}$
takes this to the map
$\wcal_n (A,-) \smashprod i$
of $\wcal \Top$, which is a (acyclic) cofibration of the cross effect model structure on $\wcal \Top$ by Corollary \ref{cor:wncofibrations}.
Thus ${(-)/\Sigma_n \circ \mapdiag^*}$ is a left Quillen
functor as claimed in Part (1.).

Part (2.) holds as every (acyclic) cofibration of
the cross effect model structure on $\wcal \Top$ is a
(acyclic) cofibration of the $n$--excisive model structure on $\wcal \Top$.

For Part (3.), by Hirschhorn \cite[Theorem 3.1.6]{hir03}
we only need to show that
$\diff_n$ takes fibrant objects of the $n$-excisive model structure
to fibrant objects of the stable model structure.
That is, if $F$ is $n$-excisive and cross effect fibrant,
then for any $A \in \wcal$
\[
(\diff_n F)(A) \to \Omega^n (\diff_n F)(A \smashprod S^1)
\]
is a weak homotopy equivalence.
This is the content of Goodwillie \cite[Proposition 3.3]{goodcalc3} with the assumption that $F(\ast)$ is equal to $\ast$, rather than just weakly equivalent. This assumption holds true for any object of $\wcal \Top$ as is noted in Section \ref{subsec:catwtop}.

For Part (4.), the cofibrations of the $n$-stable model structure have the form
$\wcal_n (A,-) \smashprod (\Sigma_n)_+ \smashprod i.$
Such a map is sent by $(-)/\Sigma_n \circ \mapdiag^*$ to  $\wcal_n (A,-) \smashprod i$,
which is a
cofibration of the $n$-excisive model structure
by Corollary \ref{cor:wncofibrations}.
This map is a cofibration of
the $n$--homogeneous model structure by
\cite[Proposition 3.3.16]{hir03} and \cite[Lemma 5.5.2]{hir03}.
So the left adjoint preserves cofibrations.
The acyclic cofibrations are the same as in Part (3),
hence the left adjoint preserves acyclic cofibrations.
\end{proof}

The following lemma provides an even simpler description of
the weak equivalences of the $n$-homogeneous model structure.
That is, one only has to know how to calculate the spaces
$\diff_n \fibrep_{\cross} F(X)$
(which is the homotopy cross effect precomposed with the diagonal)
to understand the behaviour of $\hodiff_n F$.  There is no need
to apply $P_n$ as we are no longer interested
in the objectwise weak homotopy equivalences, but the $n \pi_*$-isomorphisms.
This justifies the use of \emph{stable} when calling $\diff_n F \in \devcat$ the $n^{th}$ stable derivative.

\begin{lemma}\label{lem:stability}
Let $f \co F \to G$ be a map of
cross-effect fibrant functors.
If $f$ is a weak equivalence in the $n$-homogeneous model structure on $\wcal \Top$, then $\diff_n f$ is an
 $n \pi_*$-isomorphism.
In particular, the weak equivalences of $\wcal \Top_{\nhomog}$ are those maps
$f$ such that $\diff_n \fibrep_{\cross} f$ is an $n \pi_*$-isomorphism.
\end{lemma}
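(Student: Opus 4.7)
The plan is to reduce both parts of the lemma to a single intermediate claim: for every cross-effect fibrant $H \in \wcal \Top$, the natural comparison map
\[
\alpha_H \co \diff_n H \longrightarrow \diff_n \fibrep_{\cross} P_n H = \hodiff_n H,
\]
induced by the $n$-excisive fibrant replacement $H \to \fibrep_{\cross} P_n H$, is an $n\pi_\ast$-isomorphism in $\devcat_{\stable}$.

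Granting the claim, the naturality square with horizontal maps $\diff_n f, \hodiff_n f$ and vertical maps $\alpha_F, \alpha_G$ immediately yields the first statement: $\hodiff_n f$ is an objectwise weak equivalence (and hence an $n\pi_\ast$-isomorphism) by the very definition of the $n$-homogeneous weak equivalences from Section \ref{subsec:nhomog}, and two-out-of-three forces $\diff_n f$ to be an $n\pi_\ast$-isomorphism. For the ``in particular'' clause: the forward direction applies the first statement to the cross-effect fibrant replacement $\fibrep_{\cross} f$, which remains a weak equivalence in $\wcal \Top_{\nhomog}$. The reverse direction applies the claim to $\fibrep_{\cross} F$ and $\fibrep_{\cross} G$ to identify $\diff_n \fibrep_{\cross} f$ being an $n\pi_\ast$-isomorphism with $\hodiff_n \fibrep_{\cross} f \simeq \hodiff_n f$ being such (the latter equivalence using that both $P_n$ and $\fibrep_{\cross}$ preserve objectwise weak equivalences). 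Since $\hodiff_n F$ and $\hodiff_n G$ are fibrant in $\devcat_{\stable}$ by Part~(3) of Proposition \ref{prop:diffquillen}, any $n\pi_\ast$-isomorphism between them is automatically an objectwise weak equivalence by Proposition \ref{prop:devcatmodel}, giving that $f$ is a weak equivalence in $\wcal \Top_{\nhomog}$.

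To establish the intermediate claim, I use Lemma \ref{lem:crossfibrant} to identify $\diff_n H$ with $\hocref_n H \circ \mapdiag^\ast$ (since $H$ is cross-effect fibrant) and, because $P_n H \to \fibrep_{\cross} P_n H$ is an objectwise weak equivalence preserved by the homotopy cross-effect, identify $\hodiff_n H$ with $\hocref_n P_n H \circ \mapdiag^\ast$ up to objectwise equivalence. Thus $\alpha_H$ is homotopically the map on homotopy $n$-th cross-effects at the diagonal induced by the natural transformation $H \to P_n H$. Mirroring the strategy of the proof of Lemma \ref{lem:nhomogequivDn}, I would extend to spectrum-valued functors via $\Sigma^\infty$ and then invoke Goodwillie \cite[Proposition 5.8]{goodcalc3} on the invariance of the $n$-th homotopy cross-effect under $P_n$-equivalences. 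Combined with the cofinality of indices of the form $\ast + nk$ appearing in the colimit defining $n\pi_\ast^{S^0}$ (as set up in Section \ref{subsec:fugstable}), this yields that $\alpha_H$ is an $n\pi_\ast$-isomorphism.

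The principal obstacle lies in this last step: transferring Goodwillie's invariance of the $n$-th homotopy cross-effect under $P_n$ from the spectrum-valued setting (where stability comes for free) to the space-valued setting, and verifying that the resulting equivalence lifts to an $n\pi_\ast$-isomorphism with the specific indexing conventions of $\devcat_{\stable}$. Care must also be taken to track the $\Sigma_n$-equivariance throughout, since $\mapdiag$ lands in the $\Sigma_n$-fixed points of $\wcal_n(A,B)$ while the permutation action on $\wcal_n(A,B)^{\smsh n}$ is what makes $\diff_n F$ an object of $\devcat$ rather than merely $\wcal \Top$.
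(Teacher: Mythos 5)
Your formal reduction is sound and matches the paper's: everything does come down to showing that for cross-effect fibrant $H$ the comparison $\diff_n H \to \diff_n \fibrep_{\cross} P_n H$ is an $n\pi_*$-isomorphism, after which the first claim follows by two-out-of-three in the naturality square and the ``in particular'' clause follows from stable fibrancy of $\hodiff_n F$ and $\hodiff_n G$ exactly as you say. The gap is in your proof of the intermediate claim. The statement you want to invoke --- that the $n^{th}$ homotopy cross-effect is invariant under $P_n$-equivalences once one passes to spectrum-valued functors --- is false, and Goodwillie's Proposition 5.8 does not assert it (it compares the cross-effect with the derivative, not $\cref_n F$ with $\cref_n P_n F$). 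For a counterexample take $F(X) = \Sigma^\infty X^{\smashprod 3}$ and $n = 2$: here $P_2 F \simeq \ast$, so $\cref_2 P_2 F \simeq \ast$, while $\cref_2 F(X,Y)$ contains nontrivial summands of the shape $\Sigma^\infty (X \smashprod X \smashprod Y)$. Stabilising the \emph{target} via $\Sigma^\infty$ does not help; what is needed is multilinearisation in the \emph{source} variables, i.e.\ $P_{1,\ldots,1}$. Your closing remark about cofinality of the indices $\ast + nk$ is pointing at the right phenomenon, but it is doing all the work and is never actually carried out.

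The idea you are missing, and which is the heart of the paper's proof, is the identification
\[
(T_{1,\ldots,1}\, \cref_n F)(A,\ldots,A) \simeq \Omega^n (\diff_n F)(A \smashprod S^1),
\]
so that iterating $T_{1,\ldots,1}$ along the diagonal computes precisely the colimit defining $n\pi_*$; in other words $\diff_n F \to P_{1,\ldots,1}\diff_n F$ \emph{is} the stable fibrant replacement in $\devcat_{\stable}$, and in particular an $n\pi_*$-isomorphism. Given this, one applies the invariance of the \emph{multilinearised} cross-effect under $P_n$ (Biedermann--R\"ondigs, Theorem 5.35 of \cite{BRgoodwillie}), which says that $P_{1,\ldots,1}\cref_n \fibrep_{\cross} F \to P_{1,\ldots,1}\cref_n \fibrep_{\cross} P_n F$ is an objectwise weak equivalence, and two-out-of-three in the resulting square yields the intermediate claim. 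So the correct external input is the multilinearised invariance statement, not a $\Sigma^\infty$-stabilisation argument modelled on Lemma \ref{lem:nhomogequivDn}; the latter lemma goes in the opposite direction (from a $\hodiff_n$-equivalence to a $D_n$-equivalence) and its use of $\Sigma^\infty$ does not transfer to the claim you need here.
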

%
\begin{proof}
Let $F \in \wcal \Top$, then Goodwillie's functor $T_{1, \dots, 1}$, which is $\T_1$ applied in each variable, applied to $\cref_nF$ can be written as
\[
\scalebox{1}{$
(\T_{1,\dots,1} \cref_n F)(A, \ldots, A)
\simeq
 \Omega^n (\cref_n F)(A \smashprod S^1, \ldots, A \smashprod S^1)
= \Omega^n (\diff_n F)(A \smashprod S^1)
$}
\]
Abusing notation, we will write
$\T_{1,\ldots,1} \diff_n F (A)$ for
$\Omega^n (\diff_n F)(A \smashprod S^1)$,
even though $\diff_n$ is not an $n$-variable functor.
Recall that the functor $P_{1,\dots,1}$ is the homotopy colimit
of repeated applications of $T_{1,\dots,1}$.
Hence the map $\diff_n F \to P_{1,\ldots, 1} \diff_n F$
is (weakly equivalent to) the fibrant replacement functor
of the stable model structure on $\devcat$.
Consequently, the maps $\alpha$ and $\delta$ below are stable equivalences.
\[
\xymatrix{
\diff_n \fibrep_{\cross} F \ar[r]^-{\alpha} \ar[d]^{\beta} &
P_{1,\ldots, 1} \diff_n \fibrep_{\cross} F \ar[d]^{\gamma} \\
\diff_n \fibrep_{\cross} P_n F  \ar[r]^-{\delta} &
P_{1,\ldots, 1} \diff_n \fibrep_{\cross} P_n F \\
}
\]
The map $\gamma$
is an objectwise homotopy weak equivalence by Biedermann and R\"ondigs
\cite[Theorem 5.35]{BRgoodwillie}, when viewed as a map
\[
P_{1,\ldots,1} \cref_n \fibrep_{\cross} F \ra P_{1,\ldots,1} \cref_n \fibrep_{\cross} P_n F.
\]
So $\beta$ is an $n \pi_*$-isomorphism and the result follows immediately.
\end{proof}

\begin{theorem}\label{thm:diffQE}
The adjunction $((-)/\Sigma_n \circ \mapdiag^*,\diff_n)$ is a
Quillen equivalence with respect to the $n$--stable model structure on
$\devcat$ and the $n$--homogeneous model structure on $\wcal \Top$.
\end{theorem}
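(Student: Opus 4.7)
The plan is to apply Hovey's criterion \cite[Corollary 1.3.16]{hov99} to the Quillen pair established in Proposition \ref{prop:diffquillen}(4): it suffices to show (i) $\diff_n$ reflects weak equivalences between fibrant objects, and (ii) the derived unit is a weak equivalence on every cofibrant object of $\devcat_{\stable}$.

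For (i), by Theorem \ref{thm:nexc-model} every fibration of $\wcal \Top_{\nhomog}$ is an $n$--excisive fibration, and every $n$--excisive fibration is a cross effect fibration. Hence fibrant objects of $\wcal \Top_{\nhomog}$ are cross effect fibrant, and Lemma \ref{lem:stability} tells us that a map $f$ between such objects is a weak equivalence in $\wcal \Top_{\nhomog}$ if and only if $\diff_n f$ is an $n\pi_*$-isomorphism in $\devcat_{\stable}$. This gives both preservation and reflection of weak equivalences on fibrant objects.

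For (ii), both model categories are stable, so the derived adjunction is an adjunction of triangulated homotopy categories and both derived functors are exact. Consequently the class of cofibrant $E$ for which the derived unit
\[
\eta_E \co E \lra \diff_n \fibrep_{\nhomog}\!\bigl(((-)/\Sigma_n \circ \mapdiag^*)(E)\bigr)
\]
is a weak equivalence is closed under (de)suspensions, arbitrary coproducts, and cofibre sequences. By Proposition \ref{prop:devcatmodel}, $\ho(\devcat_{\stable})$ is generated by the single compact object $G_n := (\Sigma_n)_+ \smashprod \wcal_n(S^0,-)$, so it suffices to verify $\eta_{G_n}$ is a weak equivalence.

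On $G_n$ the free $\Sigma_n$-factor gives $((-)/\Sigma_n \circ \mapdiag^*)(G_n)(X) \cong X^{\smashprod n}$, a standard $n$-homogeneous functor, which in particular is already $n$-excisive so fibrant replacement in $\wcal \Top_{\nhomog}$ is an objectwise weak equivalence onto a cross effect fibrant model. A Yoneda-type computation of the $n$-th cross effect of $(Y_1,\dots,Y_n) \mapsto (Y_1 \vee \cdots \vee Y_n)^{\smashprod n}$ identifies its multi-linear part with $(\Sigma_n)_+ \smashprod Y_1 \smashprod \cdots \smashprod Y_n$; restricting along the diagonal yields $\diff_n((-)^{\smashprod n})(X) \cong (\Sigma_n)_+ \smashprod X^{\smashprod n} = G_n(X)$, and the unit $\eta_{G_n}$ realises this isomorphism. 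The main obstacle is the $\Sigma_n$-equivariance bookkeeping in this last step: one must match the permutation action on $\wcal_n(S^0,X) = X^{\smashprod n}$ intrinsic to $\devcat$ with the free action coming from the sum over permutations in the cross effect, which is precisely the internal versus external action distinction flagged in Remark \ref{rmk:equivariance}.
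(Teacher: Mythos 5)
Your proposal is correct in outline, and part (i) matches the paper's argument in substance (the paper verifies reflection directly by noting that $\diff_n g$ is a stable equivalence between stably fibrant objects, hence an objectwise equivalence, and that $g \simeq \fibrep_{\nexs} g$; your appeal to Lemma \ref{lem:stability} is an equivalent packaging of the same facts). Part (ii) is where you genuinely diverge: the paper identifies the derived unit with the map $\theta$ of Goodwillie \cite[Theorem 3.5]{goodcalc3} and simply cites that theorem, whereas you reduce to the compact generator $(\Sigma_n)_+ \smashprod \wcal_n(S^0,-)$ and compute the unit there explicitly (a computation which is essentially Example \ref{ex:deriv1}, and the same reduction-to-a-generator strategy the paper itself uses in Proposition \ref{prop:phiQE}). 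Your route has the virtue of being internal to the model-categorical setup rather than importing Goodwillie's classification, but it leaves two points owed. First, for the class of $E$ with invertible derived unit to be localizing you need closure under \emph{arbitrary} coproducts, which requires $\mathbb{R}\diff_n$ to preserve coproducts; this is true because the left adjoint sends the compact generator to a compact object of $\wcal \Top_{\nhomog}$ (the representing objects $\bigsmashprod{l=1}{n}\wcal(X,-)$ are the compact generators there), but it is not automatic and should be stated. Second, the derived unit targets $\diff_n \fibrep_{\nhomog}\bigl((-)^{\smashprod n}\bigr)$, so your strict Yoneda-type computation of $\cref_n\bigl((-)^{\smashprod n}\bigr)$ only finishes the argument once you know the strict cross effect of $(-)^{\smashprod n}$ agrees with the homotopy cross effect (Lemma \ref{lem:crossfibrant} requires cross effect fibrancy, which $(-)^{\smashprod n}$ need not satisfy on the nose); again true for this particular functor, since $(X_1\vee\cdots\vee X_n)^{\smashprod n}$ splits as a wedge and the relevant strict fibre is already a homotopy fibre, but it needs saying. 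With those two points supplied, your proof is complete and arguably more self-contained than the paper's.
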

%
\begin{proof}
We show that the right adjoint reflects weak equivalences between fibrant objects. Let $g \co X \to Y$ be a map between cross effect fibrant $n$-excisive functors in $\wcal \Top$ such that
$\diff_n g$ is an $n \pi_*$-isomorphism in $\devcat$.
The domain and codomain of $\diff_n g$ are fibrant in
the stable model structure by
Proposition \ref{prop:diffquillen}.
Hence, $\diff_n g$ is an objectwise weak homotopy equivalence
by Proposition \ref{prop:devcatmodel}.
The fibrancy assumption also tells us that $g \simeq \fibrep_{\nexs} g$,
so that $\diff_n g$ is weakly equivalent to $\hodiff_n g$.
Thus $\diff_n g$ is a
weak equivalence of the $n$--homogeneous model structure.

We now show that for any cofibrant $E \in \devcat$,
the derived unit map
\[
E \lra \hodiff_n ((-)/\Sigma_n \circ \mapdiag^*(E))
\]
is an $n \pi_*$-isomorphism.
But this derived unit map is the map $\theta$ of Goodwillie \cite[Theorem 3.5]{goodcalc3},
which is an equivalence.
Thus by Hovey \cite[Corollary 1.3.16]{hov99} this adjunction is a Quillen
equivalence.
\end{proof}

Recall the functor $\mu_n$ of Definition \ref{def:mufunctor} and
$\mu_n^* \co \Sigma_n \lca \wcal \Sp \to \devcat_{\stable}$,
the right adjoint of the Quillen equivalence of Proposition \ref{prop:phiQE}.

\begin{theorem}\label{thm:correctdervied}
The composite of the derived
functors of $\mapdiag^* \circ (-)/\Sigma_n$ and $\mu_n^*$
agrees with Goodwillie's classification of
$n$-homogeneous functors
(recall Figure \ref{fig:homogclass}, see \cite[Section 2-5]{goodcalc3}).
That is, for a $\Sigma_n$-spectrum, $D$, we have that
$(\mathbb{L} \mapdiag^* \circ (-)/\Sigma_n \circ \mathbb{R} \mu_n^* )(D)$
is weakly equivalent in $\wcal \Top_{\nhomog}$ to the functor
\[
A \mapsto \Omega^\infty \big( (D \smashprod A^{\smashprod n})/h \Sigma_n \big).
\]
\end{theorem}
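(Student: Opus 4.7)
The plan is to compute the derived composite directly on a $\Sigma_n$-spectrum $D$ and then compare the result with Goodwillie's formula via the Quillen equivalence of Theorem \ref{thm:diffQE}.

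First, I would fibrantly replace $D$ in $\Sigma_n\lca\wcal\Sp$ to obtain an equivariant $\Omega$-spectrum $D^f$, so that $\mathbb{R}\mu_n^*(D)\simeq\mu_n^*(D^f)$: the object of $\devcat$ given by $X\mapsto D^f(X^{\smashprod n})$ with the combined internal/external $\Sigma_n$-action described in Section \ref{subsec:gooddump}. Then I would choose a cofibrant replacement $E^c\to\mu_n^*(D^f)$ in $\devcat_{\stable}$. Because the generating cofibrations $\wcal_n(A,-)\smashprod(\Sigma_n)_+\smashprod i$ force cofibrant objects of $\devcat$ to take values in free $\Sigma_n$-spaces, the strict orbit $E^c(A)/\Sigma_n$ models the homotopy orbit $\mu_n^*(D^f)(A)_{h\Sigma_n}=D^f(A^{\smashprod n})_{h\Sigma_n}$. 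This gives an explicit description of the derived composite at each space $A$.

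The remaining identification with $\Omega^\infty\bigl((D\smashprod A^{\smashprod n})/h\Sigma_n\bigr)$ is where the main obstacle lies, since $\Omega^\infty$ does not commute with homotopy orbits in general. Rather than attempt this by hand, I would exploit the Quillen equivalence of Theorem \ref{thm:diffQE} to reduce the claim to an equality of $n$-th derivatives. By Lemma \ref{lem:stability}, two objects of $\wcal\Top_{\nhomog}$ are weakly equivalent provided $\diff_n\fibrep_{\cross}$ sends them to $n\pi_*$-isomorphic objects of $\devcat_{\stable}$. For the left-hand side, the derived counit of the Quillen equivalence yields $\mathbb{R}\diff_n$ of the composite equal to $\mathbb{R}\mu_n^*(D)\simeq\mu_n^*(D^f)$. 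For the right-hand side, Goodwillie's computations in \cite[Sections 2-5]{goodcalc3} identify the stable derivative of $A\mapsto\Omega^\infty((D\smashprod A^{\smashprod n})/h\Sigma_n)$ with $D$ as a $\Sigma_n$-spectrum, that is with $\mu_n^*(D^f)$ in $\devcat$. Hence the two derivatives agree, and the two functors are weakly equivalent in $\wcal\Top_{\nhomog}$.

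The hardest technical step is verifying that the $\Sigma_n$-action appearing in the stable derivative of Goodwillie's formula matches the combined internal/external action coming from $\mu_n^*$. This amounts to a diagram chase relating the permutation of the $A^{\smashprod n}$ factors in Goodwillie's formula to the external $\Sigma_n$-action on $D$, making essential use of the definition of $\mu_n^*$ given in Section \ref{subsec:gooddump} and Remark \ref{rmk:equivariance}.
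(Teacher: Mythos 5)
Your opening step coincides with the paper's: cofibrant objects of $\devcat$ are objectwise $\Sigma_n$-free, so the derived composite is computed by the homotopy-orbit formula $A\mapsto (E\Sigma_n)_+\smashprod_{\Sigma_n}\hocolim_k\Omega^k D^f(A^{\smashprod n}\smashprod S^k)$. After that you diverge, and the divergence is where the gap sits. You correctly identify that the obstacle is that $\Omega^\infty$ does not commute with homotopy orbits, but your way around it --- pass to $n$-th derivatives via Theorem \ref{thm:diffQE} and cite Goodwillie for the assertion that the stable derivative of $A\mapsto\Omega^\infty\big((D\smashprod A^{\smashprod n})/h\Sigma_n\big)$ is $\mu_n^*(D^f)$ in $\devcat$ --- relocates essentially the entire content of the theorem into that citation. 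Goodwillie's computation is carried out for his derivative $\partial_n$ (multilinearized homotopy cross-effect evaluated at spheres) valued in $\Sigma_n$-spectra; to use it here you must first show that $\mathbb{R}\diff_n$ corresponds to $\partial_n$ under the equivalence $(\wcal\smashprod_{\wcal_n}-,\mu_n^*)$ of Proposition \ref{prop:phiQE}, including the match between Goodwillie's permutation action and the combined internal/external action of Section \ref{subsec:gooddump}. That compatibility is exactly what you defer as ``the hardest technical step,'' it is not established elsewhere in the paper, and composed with the two Quillen equivalences it is essentially equivalent to the statement being proved. A smaller imprecision: Lemma \ref{lem:stability} characterizes which \emph{maps} are weak equivalences; to conclude that two \emph{objects} are isomorphic in $\ho(\wcal\Top_{\nhomog})$ from an abstract isomorphism of their derived derivatives you need that $\mathbb{R}\diff_n$ induces an equivalence of homotopy categories and hence reflects isomorphisms, i.e.\ the full strength of Theorem \ref{thm:diffQE}.

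The paper's proof never leaves $\wcal\Top_{\nhomog}$ and supplies exactly the ingredient you are missing. It first rewrites the homotopy-orbit formula as $A\mapsto(E\Sigma_n)_+\smashprod_{\Sigma_n}\Omega^\infty(D\smashprod A^{\smashprod n})$ using the natural equivalence of \cite[Proposition 17.6]{mmss01}, and then compares this with $A\mapsto\Omega^\infty\big((E\Sigma_n)_+\smashprod_{\Sigma_n}(D\smashprod A^{\smashprod n})\big)$ by a connectivity estimate (Weiss \cite[Example 6.4]{weiss95}): the two functors agree to order $n$ in the sense of \cite[Definition 1.2]{goodcalc3}, hence are $P_n$-equivalent by \cite[Proposition 1.6]{goodcalc3}, hence weakly equivalent in $\wcal\Top_{\nhomog}$. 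To complete your outline you would need either such a connectivity argument or the explicit comparison of actions you postponed; as written, the proof is not complete.
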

%
\begin{proof}
The functor $\mapdiag^* \circ (-)/\Sigma_n$ preserves
objectwise weak homotopy equivalences (such as acyclic fibrations
in the stable model structure) between $\Sigma_n$-free objects.
Hence the derived composite of $\mapdiag^* \circ (-)/\Sigma_n$ and $\mu_n^*$
applied to some $E \in \Sigma_n \lca \Sp$ is given by
\begin{equation}
A  \mapsto  (E \Sigma_n)+ \smashprod_{\Sigma_n}
\big( \hocolim_{k \in\mathbb{Z}} \Omega^k E(A^{\smashprod n} \smashprod S^k) \big) \tag{$a$}
\end{equation}
We claim that the functor $(a)$ is weakly equivalent in $\wcal \Top_{\nhomog}$ to each of the following two functors
\begin{align}
A  \mapsto  (E \Sigma_n)+ \smashprod_{\Sigma_n}
\big( \Omega^\infty (E \smashprod B^{\smashprod n}) \big) \tag{$b$} \\
A  \mapsto
\Omega^\infty \big(  (E \Sigma_n)+ \smashprod_{\Sigma_n} (E \smashprod B^{\smashprod n}) \big) \tag{$c$}.
\end{align}
That $(a)$ is equivalent to $(b)$ follows from
Mandell et al.\ \cite[Proposition 17.6]{mmss01}, which implies
that there is a natural weak homotopy equivalence of spaces
\[
\Omega^\infty (E \smashprod B^{\smashprod n}) :=  \hocolim_{k \in\mathbb{Z}} \Omega^k (E(S^k) \smashprod B^{\smashprod n})
\lra
\hocolim_{k \in\mathbb{Z}} \Omega^k E(B^{\smashprod n} \smashprod S^k)
\]
By the connectivity
arguments of Weiss \cite[Example 6.4]{weiss95},
the functors $(b)$ and $(c)$ agree up to order $n$,
in the sense of \cite[Definition 1.2]{goodcalc3}.
By \cite[Proposition 1.6]{goodcalc3} they are $P_n$-equivalent.
Hence our derived functor is weakly equivalent in $\wcal \Top_{\nhomog}$
to Goodwillie's formula.
\end{proof}

Note that our derived composite sends a spectrum $E$
to a functor in $\wcal \Top_{\nhomog}$ which is weakly equivalent to that of Goodwillie's theorem.
However the formula of Goodwillie actually creates an $n$-homogenous functor directly.
%
\begin{ex}\label{ex:deriv1}
As an example of how our version of the classification can
make calculations easier,
consider the cofibrant object
$(\Sigma_n)_+ \smashprod \wcal_n(X,-)$ of $\devcat$.
The derived functor of $\wcal \smashprod_{\wcal_n} -$
sends this to $(\Sigma_n)_+ \smashprod \wcal(X^{\smashprod n},-)$.
Equally the derived functor of
$(-)/\Sigma_n \circ \mapdiag^\ast$ sends this to
$\wcal_n(X,-)$ in $\wcal \Top_{\nhomog}$.
Hence we have that the $n$-homogeneous part of
$\wcal_n(X,-)$ is classified by the $\Sigma_n$--spectrum
$(\Sigma_n)_+ \smashprod \wcal(X^{\smashprod n},-)$.
In the case $X=S^0$, this says that the functor $A \to A^{\smashprod n}$
has $n^{th}$ derivative $(\Sigma_n)_+ \smashprod \sphspec$,
(recall the sphere spectrum in $\wcal \Sp$ is
given by $\wcal(S^0,-)$).
This is analogous to the statement that
the $n^{th}$ derivative of $x^n$ is $n!$.
\end{ex}

\begin{ex}\label{ex:deriv2}
We may also make an analogy to the statement:
the $n^{th}$ derivative of $x^n/n!$ is $1$.
Consider the non-cofibrant object
$\wcal_n(X,-)$ of $\devcat$.
The derived functor of $\wcal \smashprod_{\wcal_n} -$
sends this to $\wcal(X^{\smashprod n},-)$.
Equally the derived functor of
$(-)/\Sigma_n \circ \mapdiag^\ast$ sends $\wcal_n(X,-)$ to
$(E \Sigma_n)_+ \smashprod_{\Sigma_n} \wcal_n(X,-)$ in $\wcal \Top_{\nhomog}$.
In the case $X=S^0$, this says that the functor
$A \to A^{\smashprod n}/h \Sigma_n$
has $n^{th}$ derivative given by $\sphspec$.
\end{ex}

In general, we can take a spectrum with $\Sigma_n$-action, find a model
for it in $\devcat$ and then easily calculate its image in $\wcal \Top_{\nhomog}$.
Finding a model for a spectrum with $\Sigma_n$-action in $\devcat$ is a standard problem, akin to finding a nice point-set model of an EKMM spectrum in terms of orthogonal spectra or symmetric spectra. This combined with Lemma \ref{lem:stability}
shows how our new perspective and description of the classification simplifies some
calculations.

\section{Quillen equivalence with symmetric multilinear functors}\label{sec:comp-symm-lin}
We establish in Theorem \ref{thm:devcat-BR-eq} a Quillen equivalence between our new category  $\devcat_{\stable}$ and $\symfun( \wcal^n, \Top)_{ml}$,
the category of symmetric functors with the symmetric-multilinear model structure. This result makes it clearer still that the category of symmetric multilinear functors can be omitted from the classification of $n$-homogeneous functors.

We begin by giving some definitions and recalling the statement of the symmetric-multilinear model structure of Biedermann and R\"ondigs \cite[Theorem 5.20]{BRgoodwillie}. Let $\wcal^n$ be the topological category with objects $n$--tuples of spaces in $\wcal$ and morphisms spaces $\wcal (X_1, Y_1) \smsh \cdots \smsh \wcal (X_n, Y_n)$ for $(X_1, \ldots, X_n)$ and $(Y_1, \ldots, Y_n)$ in $\wcal^n$. There are a pair of obvious $\Top$-enriched functors, $\Delta$ and $\smsh$, between this category and $\wcal$ given by
\[
\begin{array}{rclcrcl}
\Delta \co \wcal & \lra & \wcal^n &\hspace{1cm} & \smashprod \co \wcal^n & \lra & \wcal \\
X & \mapsto & (X, \ldots, X) & & (X_1, \ldots, X_n ) & \mapsto& X_1\smsh X_2 \smsh \cdots \smsh X_n\\
f:X\ra Y & \mapsto & [(f, \ldots, f)] & & [(f_1, \ldots, f_n )] & \mapsto& f_1\smsh f_2 \smsh \cdots \smsh f_n .\\
\end{array}
\]

There is a less obvious $\Top$-enriched functor from $\wcal_n$ to $\wcal^n$,
which we call $\obdiag$. It is the diagonal on objects and the identity on morphism spaces. That is,

\[
\begin{array}{rcl}
\obdiag: \wcal_n & \lra & \wcal^n \\
X & \mapsto& (X, \ldots, X) \\
\wcal_n (X,Y) = \bigsmashprod{l=1}{n} \wcal (X,Y) &
\overset{\id}{\mapsto} &
\wcal_n (X,Y) = \bigsmashprod{l=1}{n} \wcal (X,Y)\\
\end{array}
\]
Recall the functor $\mapdiag$, defined in the proof of Proposition \ref{prop:diffnleft}. The
diagonal functor $\Delta$ as given above is the composite $\obdiag \circ \mapdiag$.

Let $\symfun( \wcal^n, \Top)$ denote the category of symmetric functors from $\wcal^n$ to $\Top$. An $n$-variable functor $F$ is symmetric precisely when, for each $\sigma \in \Sigma_n$, there is a natural isomorphism $F(X_1, \ldots, X_n) \cong F(X_{\sigma(1)},\ldots, F(X_{\sigma (n)}))$. When $F$ is symmetric and $X_l=X$ for all $l$,
$F(X, \dots ,X)$ has an action of $\Sigma_n$.
Using this action and pre-composition with $\obdiag$, we obtain a functor from $\symfun( \wcal^n, \Top)$
to
$\devcat$ which we call $\obdiag^*$. We can also
consider $\cref_n$ as a functor from $\wcal \Top$
to $\symfun( \wcal^n, \Top)$.
Since the cross effect precomposed with the diagonal is
the functor $\diff_n$, we have
the following commutative diagram of functors.
\[
\xymatrix{
\devcat
& &
\wcal \Top
\ar[ll]_-{\diff_n}
\ar[dl]^-{\cref_n}
\\
&
\symfun( \wcal^n, \Top)
\ar[ul]^-{\obdiag^*}
}
\]

We use this diagram to relate our work and that
of Biedermann and R\"ondigs \cite{BRgoodwillie}.
They develop a \textbf{symmetric multilinear model structure} on  $\symfun( \wcal^n, \Top)$,
a modification of their \textit{hf}(``homotopy functor")-model structure.
In $\wcal\Top$, all of our functors are homotopy functors and the hf-model structure is then the projective model structure. We modify their statements (see \cite[Definition 5.19]{BRgoodwillie}) accordingly:

\begin{theorem}\label{thm:BRsym}
There is a model category $\symfun( \wcal^n, \Top)_{ml}$
whose underlying category is the category of symmetric functors
from $\wcal^{n}$ to $\Top$.
The weak equivalences are the maps $f$ such that $P_{1,...,1}(f)$
is an objectwise weak homotopy equivalence,
called multilinear equivalences; %
the cofibrations are the projective cofibrations; and %
the fibrations are the objectwise fibrations $f: F\ra G$ such that either (and hence both) of
the following squares
\[
\xymatrix{
F \ar[r] \ar[d]  & P_{1, \ldots, 1} F \ar[d]^{P_{1, \ldots, 1} (f)} \\
G \ar[r]           & P_{1, \ldots, 1}  G\\
}
\qquad \qquad
\xymatrix{
F \ar[r] \ar[d]  & T_{1, \ldots, 1} F \ar[d]^{T_{1, \ldots, 1} (f)} \\
G \ar[r]           & T_{1, \ldots, 1}  G\\
}
\]
is an objectwise homotopy pullback square.
Moreover, the fibrant objects are the symmetric multilinear functors.
\end{theorem}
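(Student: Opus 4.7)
The plan is to build $\symfun(\wcal^n, \Top)_{ml}$ as a left Bousfield localisation of a projective model structure on symmetric functors, following the same pattern used to construct $\wcal \Top_{\nexs}$ in Theorem \ref{thm:nexc-model}.

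First I would establish the projective model structure on $\symfun(\wcal^n, \Top)$. Presenting the category of symmetric functors as $\Fun(\Sigma_n \wr \wcal^n, \Top)$, one transfers a projective model structure from the non-symmetric functor category along the free-forgetful adjunction, exactly as $\Sigma_n \lca \wcal \Top$ is obtained in Lemma \ref{lem:projmod-2}. The weak equivalences and fibrations are detected objectwise in the $q$-model structure on $\Top$, and generating (acyclic) cofibrations arise as tensors of symmetrised representables with $I_{\Top}$ (respectively $J_{\Top}$).

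Next, to enforce linearity in each variable, I would consider for each $1 \leqslant l \leqslant n$ and tuple $\underline{X}=(X_1,\ldots,X_n)$ the evaluation map
\[
s^l_{\underline{X}} \co \underset{S \in \pcal_0(\underline{2})}{\hocolim}
\wcal^n\big((X_1, \ldots, S \join X_l, \ldots, X_n),-\big)
\longrightarrow
\wcal^n(\underline{X}, -),
\]
symmetrise it, then replace it with the inclusion into its mapping cylinder to obtain an objectwise $h$-cofibration, mirroring the construction of $K_n$ in Section \ref{subsec:nexcmod}. By Yoneda, a fibrant symmetric functor $F$ is local with respect to these maps if and only if $F \to T_1 F$ is an objectwise weak equivalence in every slot, which by iteration is precisely the condition $F \simeq T_{1,\ldots,1} F$ and hence characterises the symmetric multilinear functors via the $n$-variable analogue of \cite[Theorem 1.8]{goodcalc3}.

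The existence of the localisation and its properness then follow by applying Bousfield \cite[Theorem 9.3]{bous01} to $P_{1,\ldots,1}$, just as in the proof of Proposition \ref{prop:proper}; this also identifies the weak equivalences as the multilinear equivalences. The characterisation of the fibrations via the two homotopy pullback squares is obtained by analysing the right lifting property against the generating acyclic cofibrations, exactly as in the proof of Theorem \ref{thm:nexc-model} (the two squares coincide up to homotopy since $P_{1,\ldots,1}$ is a homotopy colimit of the $T_{1,\ldots,1}^k$). The chief obstacle is verifying that $P_{1,\ldots,1}$ satisfies Bousfield's axioms in the symmetric, topological setting: one must check that iterated application of $T_1$ in each variable preserves the symmetric structure and objectwise weak equivalences, and that it assembles into a well-defined natural transformation $\id \to P_{1,\ldots,1}$ with the appropriate coherence. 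This is essentially the content of \cite[Section 5]{BRgoodwillie} transported from simplicial sets to topological spaces via the analogous adaptations already carried out in Sections \ref{sec:modelgoodwillie} and \ref{sec:calcmod}.
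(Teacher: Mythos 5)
Your proposal is correct and follows essentially the same route the paper intends: the paper itself does not prove this theorem in detail but defers to Biedermann--R\"ondigs \cite[Theorem 5.20]{BRgoodwillie}, noting only that the hf-model structure degenerates to the projective one in $\wcal\Top$ and that the wreath-product presentation of Definition \ref{def:wreath} is the useful description of the underlying category. Your sketch --- projective structure via $\Fun(\Sigma_n \wr \wcal^n, \Top)$, left Bousfield localisation at mapping-cylinder replacements of the slot-wise $T_1$ maps, and properness via Bousfield's theorem applied to $P_{1,\ldots,1}$ --- is a faithful reconstruction of that argument in the pattern of Theorem \ref{thm:nexc-model} and Proposition \ref{prop:proper}.
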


In proving this result, it is helpful to have a different, but equivalent, description of the category.
This alternate description (adjusted to our setting)
is given below, see \cite[Lemma 3.6]{BRgoodwillie}.

\begin{definition}\label{def:wreath}
The \textbf{wreath product category} $(\Sigma_n \wr \wcal^n)$ has objects
the class of $n$-tuples $(X_1, \ldots, X_n)$ of objects of $\wcal$. The morphisms from  $\uline{X} =(X_1, \ldots, X_n)$ to $\uline{Y} =(Y_1, \ldots, Y_n)$ are given by
\[
(\Sigma_n \wr \wcal^n) \left(  \uline{X}, \uline{Y}  \right)
=
\bigvee_{\sigma \in \Sigma_n} \bigsmashprod{l=1}{n} \wcal (X_l, Y_{\sigma^{-1}(l)})
\]
with composition defined as for the wreath product of groups.
\end{definition}

Given the model structure of Theorem \ref{thm:BRsym}, we may now establish the following formal comparison of our work with that of \cite{BRgoodwillie}.
\begin{theorem}\label{thm:devcat-BR-eq} The functor $\obdiag^\ast$ is a right Quillen adjoint, and induces a Quillen equivalence
between $\symfun( \wcal^n, \Top)_{\ml}$ and $\devcat$ with the stable model structure.
\end{theorem}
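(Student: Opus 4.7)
The plan is to identify the left adjoint $\obdiag_!$ via enriched left Kan extension, verify that $(\obdiag_!, \obdiag^*)$ is a Quillen pair, and then deduce the Quillen equivalence from a $2$-out-of-$3$ argument applied to the commutative triangle $\diff_n = \obdiag^* \circ \cref_n$ highlighted just before the theorem.

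First I would produce the left adjoint. Since $\obdiag \co \wcal_n \to \wcal^n$ is a morphism of topologically enriched categories (with $\symfun( \wcal^n, \Top) \simeq \Fun(\Sigma_n \wr \wcal^n, \Top)$), enriched Kan extension along $\obdiag$ produces a left adjoint $\obdiag_!$ to $\obdiag^*$. On generating representables, $\obdiag_!$ sends $(\Sigma_n)_+ \smsh \wcal_n(A,-)$ to the wreath-product representable at the diagonal object $(A, \ldots, A)$, which is a standard projective-cofibrant generator of $\symfun(\wcal^n, \Top)$.

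Next I would show $(\obdiag_!, \obdiag^*)$ is a Quillen pair with respect to the localised structures. Using the generators from Proposition \ref{prop:devcatmodel}, $\obdiag_!$ carries generating cofibrations (and generating projective acyclic cofibrations) to projective (acyclic) cofibrations of $\symfun(\wcal^n, \Top)_{\ml}$. Since both $\symfun(\wcal^n, \Top)_{\ml}$ and $\devcat_{\stable}$ arise as Bousfield localisations of their projective structures, it then suffices by Hirschhorn \cite[Theorem 3.1.6]{hir03} to verify that $\obdiag^*$ preserves fibrant objects. If $F$ is symmetric multilinear, then linearity in each variable separately gives an iterated chain of objectwise weak equivalences
\[
F(X, \ldots, X) \simeq \Omega F(X \smsh S^1, X, \ldots, X) \simeq \cdots \simeq \Omega^n F(X \smsh S^1, \ldots, X \smsh S^1),
\]
which is exactly the stability condition for $\obdiag^* F$ to be fibrant in $\devcat_{\stable}$ (Proposition \ref{prop:devcatmodel}).

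Finally, the Quillen equivalence statement follows from the commutative diagram of right adjoints $\diff_n = \obdiag^* \circ \cref_n$ together with $2$-out-of-$3$ for Quillen equivalences. Theorem \ref{thm:diffQE} supplies one factor as a Quillen equivalence, and the topological analogue of the Biedermann--R\"ondigs classification (their $\cref_n$ Quillen equivalence, see Figure \ref{fig:BR}) supplies the other. I expect the main obstacle to be the faithful transfer of the Biedermann--R\"ondigs Quillen equivalence for $\cref_n$ from the simplicial setting into $\wcal \Top$. This transfer rests on the Quillen equivalence between $\wcal \Top$ and $\Fun(\scal^f, \scal)$ with the homotopy functor model structure (discussed in Section \ref{subsec:catwtop}) and on checking that this equivalence intertwines the cross-effect construction and the multilinear localisation. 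An alternative self-contained route, which sidesteps this transfer, is to verify directly that the derived unit $E \to \obdiag^* \obdiag_!(E)$ is an $n\pi_*$-isomorphism on a cofibrant generator; by Proposition \ref{prop:devcatmodel} it suffices to test on the single homotopy-categorical generator $(\Sigma_n)_+ \smsh \wcal_n(S^0,-)$, where the check reduces to an explicit identification of the diagonal restriction of the wreath-product representable at $(S^0, \ldots, S^0)$.
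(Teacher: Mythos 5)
Your proposal is correct and follows essentially the same route as the paper: establish that $\obdiag^*$ is right Quillen (the paper does this by checking directly that $\obdiag^*$ carries the homotopy-pullback-square characterisation of multilinear fibrations to the one for stable fibrations in $\devcat$, rather than via fibrant objects and Hirschhorn's criterion, but this is a cosmetic difference), and then deduce the equivalence by two-out-of-three applied to $\diff_n = \obdiag^* \circ \cref_n$, using Theorem \ref{thm:diffQE} for one factor and the Biedermann--R\"ondigs result (\cite[Corollary 6.17]{BRgoodwillie}) for $\cref_n$. Your observation that the transfer of the $\cref_n$ equivalence to the topological setting deserves justification is a fair point that the paper's proof elides by citing the simplicial result directly, and your proposed fallback via the derived unit on the generator is a reasonable safeguard.
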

%
\begin{proof}
Recall that in the stable model structure on $\devcat$
(Proposition \ref{prop:devcatmodel})
the fibrations are those maps $f:F \ra G$ which are objectwise fibrations, such that
square below is an objectwise homotopy pullback.
\[
\xymatrix{
F \ar[r] \ar[d]_f  &\Omega^n F (-\smsh S^1)  \ar[d]^{f(- \smsh S^1)}\\
G \ar[r]           & \Omega^n G(-\smsh S^1) \\
}
\]
The functor $\obdiag^*$ preserves objectwise (acyclic) fibrations.
Moreover if the right hand square of Theorem \ref{thm:BRsym}
is an objectwise pullback square, then $\obdiag^*$
sends it to a square of the same form as the above.
Therefore, $\obdiag^*$ is a right Quillen functor.

The functor $\diff_n$ is the right adjoint of a Quillen equivalence by
Theorem \ref{thm:diffQE} whereas $\cref_n$ is the right adjoint of a
Quillen equivalence by \cite[Corollary 6.17]{BRgoodwillie}.
Since $\obdiag^* \circ \cref_n = \diff_n$, it follows that
$\obdiag^*$ is also part of a Quillen equivalence.
\end{proof}

\addcontentsline{toc}{part}{Bibliography}
\bibliographystyle{plain}

\begin{thebibliography}{10}

\bibitem{aroneweiss}
G.~Arone.
\newblock The weiss derivatives of {BO (-)} and {BU (-)}.
\newblock {\em Topology}, 41(3):451--481, 2002.

\bibitem{barneseldred15}
D.~Barnes and R.~Eldred.
\newblock Comparing the orthogonal and homotopy functor calculi.
\newblock To appear, {arXiv:1505.05458},  2015.

\bibitem{barnesoman13}
D.~Barnes and P.~Oman.
\newblock Model categories for orthogonal calculus.
\newblock {\em Algebr. Geom. Topol.}, 13(2):959--999, 2013.

\bibitem{barnesroitzheimstable}
D.~Barnes and C.~Roitzheim.
\newblock Stable left and right {B}ousfield localisations.
\newblock {\em Glasg. Math. J.}, 56(1):13--42, 2014.

\bibitem{BCR07}
G.~Biedermann, B.~Chorny, and O.~R{\"o}ndigs.
\newblock Calculus of functors and model categories.
\newblock {\em Adv. Math.}, 214(1):92--115, 2007.

\bibitem{BRgoodwillie}
G.~Biedermann and O.~R{\"o}ndigs.
\newblock Calculus of functors and model categories, II.
\newblock {\em Algebr. Geom. Topol.}, 14(5):2853--2913, 2014.

\bibitem{bous01}
A.~K. Bousfield.
\newblock On the telescopic homotopy theory of spaces.
\newblock {\em Trans. Amer. Math. Soc.}, 353(6):2391--2426, 2001.

\bibitem{CI04}
J.~D. Christensen and D.~C. Isaksen.
\newblock Duality and pro-spectra.
\newblock {\em Algebr. Geom. Topol.}, 4:781--812, 2004.

\bibitem{gw90}
T.~Goodwillie.
\newblock Calculus {I}: The first derivative of pseudoisotopy theory.
\newblock {\em K-Theory}, 4(1):1--27, 1990.

\bibitem{gw91}
T.~Goodwillie.
\newblock Calculus {II}: {A}nalytic functors.
\newblock {\em K-Theory}, 5(4):295-- 332, 1991/92.

\bibitem{goodcalc3}
T.~Goodwillie.
\newblock Calculus {III}: {T}aylor {S}eries.
\newblock {\em Geom. Topol.}, 7:645--711, 2003.

\bibitem{heller82}
A.~Heller.
\newblock Homotopy in functor categories.
\newblock {\em Trans. Amer. Math. Soc.}, 272(1):185--202, 1982.

\bibitem{hir03}
P.~S. Hirschhorn.
\newblock {\em Model categories and their localizations}, volume~99 of {\em
  Mathematical Surveys and Monographs}.
\newblock American Mathematical Society, Providence, RI, 2003.

\bibitem{hov99}
M.~Hovey.
\newblock {\em Model categories}, volume~63 of {\em Mathematical Surveys and
  Monographs}.
\newblock American Mathematical Society, Providence, RI, 1999.

\bibitem{kell05}
G.~M. Kelly.
\newblock Basic concepts of enriched category theory.
\newblock {\em Reprints in Theory and Applications of Categories}, (10), 2005.
\newblock Reprint of the 1982 original [Cambridge Univ. Press, Cambridge;
  MR0651714].

\bibitem{luriehigher}
J.~Lurie.
\newblock Higher algebra.
\newblock http://www. math. harvard. edu/\~{} lurie, 2014.

\bibitem{mm02}
M.~A. Mandell and J.~P. May.
\newblock Equivariant orthogonal spectra and {$S$}-modules.
\newblock {\em Mem. Amer. Math. Soc.}, 159(755):x+108, 2002.

\bibitem{mmss01}
M.~A. Mandell, J.~P. May, S.~Schwede, and B.~Shipley.
\newblock Model categories of diagram spectra.
\newblock {\em Proc. London Math. Soc. (3)}, 82(2):441--512, 2001.

\bibitem{mp12}
J.~P. May and K.~Ponto.
\newblock {\em More concise algebraic topology}.
\newblock Chicago Lectures in Mathematics. University of Chicago Press,
  Chicago, IL, 2012.

\bibitem{ms06}
J.~P. May and J.~Sigurdsson.
\newblock {\em Parametrized homotopy theory}, volume 132 of {\em Mathematical
  Surveys and Monographs}.
\newblock American Mathematical Society, Providence, RI, 2006.

\bibitem{pereira13}
L.~A. Pereira.
\newblock A general context for {G}oodwillie {C}alculus.
\newblock arXiv:1301.2832, 2013.

\bibitem{ss03stabmodcat}
S.~Schwede and B.~Shipley.
\newblock Stable model categories are categories of modules.
\newblock {\em Topology}, 42(1):103--153, 2003.

\bibitem{weiss95}
M.~Weiss.
\newblock Orthogonal calculus.
\newblock {\em Trans. Amer. Math. Soc.}, 347(10):3743--3796, 1995.

\end{thebibliography}

\begin{tabular}{ll}
\begin{tabular}{l}
David Barnes \\
Pure Mathematics Research Centre, \\
Queen's University, \\
Belfast BT7 1NN, UK\\
\url{d.barnes@qub.ac.uk}\\
\end{tabular}
&
\begin{tabular}{l}
Rosona Eldred \\
Mathematisches Institut, \\
Universit\"{a}t M\"{u}nster\\
Einsteinstr. 62, 48149 M\"{unster}, Germany\\
\url{eldred@uni-muenster.de}
\end{tabular}
\end{tabular}

\end{document}